\newtheorem{theorem}{Theorem}[section]
\newtheorem{lemma}[theorem]{Lemma}
\newtheorem{corollary}[theorem]{Corollary}
\newtheorem{fact}[theorem]{Fact}
\newtheorem{proposition}[theorem]{Proposition}
\theoremstyle{definition}
\newtheorem{example}[theorem]{Example}
\newtheorem{question}[theorem]{Question}
\newtheorem{remark}[theorem]{Remark}
\newtheorem{definition}[theorem]{Definition}
\def \dcl{\operatorname{dcl}}
\def \acl{\operatorname{acl}}
\def \Lin{\operatorname{Lin}}
\def \tp{\operatorname{tp}}
\def \id{\operatorname{id}}
\def \V {\mathcal V}
\def \aut {\operatorname{Aut}}
\def \theo {\operatorname{Th}}
\def \FC {\mathfrak{C}}
\def \CL {\mathcal{L}}
\def \Zz {\mathbb{Z}}
\def \dom{\operatorname{dom}}
\def \Ord{\operatorname{Ord}}
\def \Lim{\operatorname{Lim}}
\DeclareMathOperator{\scf}{SCF}
\DeclareMathOperator{\sep}{sep}
\DeclareMathOperator{\irk}{inp-rk}
\DeclareMathOperator{\drk}{dp-rk}
\def\Ind#1#2{#1\setbox0=\hbox{$#1x$}\kern\wd0\hbox to 0pt{\hss$#1\mid$\hss}
\lower.9\ht0\hbox to 0pt{\hss$#1\smile$\hss}\kern\wd0}
\def\ind{\mathop{\mathpalette\Ind{}}}
\def\Notind#1#2{#1\setbox0=\hbox{$#1x$}\kern\wd0\hbox to 0pt{\mathchardef
\nn=12854\hss$#1\nn$\kern1.4\wd0\hss}\hbox to
0pt{\hss$#1\mid$\hss}\lower.9\ht0 \hbox to
0pt{\hss$#1\smile$\hss}\kern\wd0}
\def\nind{\mathop{\mathpalette\Notind{}}}
\title{On rank not only in NSOP$_1$ theories}
\author{Jan Dobrowolski$^{\ast}$}
\thanks{$^{\ast}$Supported by DFG project BA
6785/2-1}
\address{Jan Dobrowolski, Institute for Mathematical Logic and Basic Research, 
Department of Mathematics and Computer Science at the University of Münster\\
Münster\\ Germany, {\em and}\newline Instytut Matematyczny\\
Uniwersystet Wroc\l{}awski\\
Wroc\l{}aw\\
Poland}
\email{dobrowol@math.uni.wroc.pl}
\author{Daniel Max Hoffmann$^{\dagger}$}
\thanks{$^{\dagger}$SDG}
\address{Daniel Max Hoffmann, Instytut Matematyki\\
Uniwersytet Warszawski\\
Warszawa\\
Poland}
\email{daniel.max.hoffmann@gmail.com}
\urladdr{{https://sites.google.com/site/danielmaxhoffmann/}}
\date{\today}
\subjclass[2020]{Primary 03C95; Secondary: 03C45, 03C52}
\keywords{model theory, NSOP$_1$, Kim-independence, rank}
\begin{document}

\maketitle

\begin{abstract}
We introduce a family of local ranks $D_Q$ depending on a finite set $Q$ of pairs of the form $(\varphi(x,y),q(y))$ where $\varphi(x,y)$ is a formula and $q(y)$ is a global type. We prove that in any NSOP$_1$ theory these ranks satisfy some desirable properties; in particular,  $D_Q(x=x)<\omega$ for any finite variable $x$ and any $Q$, if $q\supseteq p$ is a Kim-forking extension of types, then $D_Q(q)<D_Q(p)$ for some $Q$, and if $q\supseteq p$ is a Kim-non-forking extension, then $D_Q(q)=D_Q(p)$ for every $Q$ that involves only invariant types whose Morley powers are $\ind^K$-stationary. We give natural examples of families of invariant types satisfying this property in some NSOP$_1$ theories.

We also answer a question of Granger about equivalence of dividing and dividing finitely in the theory $T_\infty$ of vector spaces with a generic bilinear form. We conclude that forking equals dividing in $T_\infty$, strengthening an earlier observation that $T_\infty$ satisfies the existence axiom for forking independence.

Finally, we slightly modify our definitions and go beyond NSOP$_1$ to find out that our local ranks are bounded by the well-known ranks: the inp-rank (\emph{burden}), and hence, in particular, by the dp-rank. Therefore, our local ranks are finite provided that the dp-rank is finite, for example if $T$ is dp-minimal. 
Hence, our notion of ranks identifies a non-trivial class of  theories containing all NSOP$_1$ and NTP$_2$ theories.
\end{abstract}

\section{Introduction}
In the past years, we observed a rapid development of geometric tools and techniques related to model-theoretic stability theory. After a successful use of these techniques in the context of stable theories and remarkable applications in algebraic geometry, studies went beyond the class of stable theories.
One of the main tools of a geometric nature in model theory is the notion of an independence relation (cf. \cite{HansThesis}), which plays a key role in the description of simple theories (cf. \cite{KimPi}). Another important geometric tool in model theory is the notion of a rank, which also can be used to characterize dividing lines in the stability hierarchy. For example a theory is simple if and only if the local rank $D(x=x,\varphi,k)$ is finite for every choice of a formula $\varphi$ and every natural number $k$ (cf. Proposition 3.13 in \cite{casasimpl}). Actually, in the case of simple theories there is an elegant connection between the local rank $D$ and forking independence $\ind$, in short: the rank decreases in an extension of types if and only if  this extension is a forking extension (Proposition 5.22 in \cite{casasimpl}).
On top of that, the local rank in simple theories was used to develop the theory of generics there (\cite{simpleGenerics}).

Independence relations and ranks behave less nicely in the case of non-simple NSOP$_1$ theories.
The NSOP$_1$ theories were defined in \cite{sheDza}, then studied more intensively in \cite{ArtemNick} and in \cite{kaplanramsey2017}, where also the ideas from Kim's talk (\cite{KimTalk}) came to the picture (roughly speaking: Kim proposed a notion of independence corresponding to non-dividing along Morley sequences).
Further studies on NSOP$_1$ focused on proving desired properties of the notion of independence related to the notion of Kim-forking as defined in \cite{kaplanramsey2017} (where Kim-dividing and Kim-forking were defined with the use of global invariant types), e.g. \cite{kaplanramsey2021} and \cite{KRS}.
The problem with this approach is that sometimes there are no invariant global types extending a given type
 (however, everything is fine if we work only over models).
Then, in \cite{DKR}, the authors redefined the notions of Kim-dividing, Kim-forking and Kim-independence to avoid this obstacle and worked with definitions more in the spirit of \cite{KimTalk}. However, they needed an extra assumption, i.e. they were working in NSOP$_1$ theories enjoying the existence axiom for  forking independence.
The NSOP$_1$ theories enjoying the existence axiom were also studied  in \cite{CKR}, where e.g. transitivity of Kim-independence (as defined in \cite{DKR}) was obtained over arbitrary sets.
The important question, whether every NSOP$_1$ theory automatically enjoys the existence axiom for forking independence remains open.

Similarly as forking independence in the case of simple theories, Kim-independen\-ce was used to describe the class of NSOP$_1$ theories (\cite{kaplanramsey2017}). Therefore one could expect that there should also exist a good notion of a rank, which, similarly to the situation in simple theories, is related to Kim-independence in the context of NSOP$_1$ theories and which  also describes the class of NSOP$_1$ theories (i.e. the rank is finite if and only if the theory is NSOP$_1$).
Some attempts to define such a rank for NSOP$_1$ theories were made in \cite{CKR}, however they were not fully successful in relating the rank to  Kim-independence (see Question 4.9 in \cite{CKR}).
On the other hand, the rank defined in \cite{CKR} is finite provided $T$ is NSOP$_1$ with existence
and, in a private communication, Byunghan Kim informed us that SOP$_1$ implies that this rank is not finite 
(for some formula $\varphi$, natural number $k$ and some type $q$).
Thus finiteness of the rank from \cite{CKR} characterizes the class of NSOP$_1$ theories.

Let us mention here that also the situation with generics in NSOP$_1$ groups is more difficult than in groups with simple theory. For example, the theory of vector spaces with a generic bilinear form with values in an algebraically closed field, does not have Kim-forking generics for the additive group of vector space (see \cite{bilinear}).
The theory of generics in NSOP$_1$ groups is currently under development and a suitable notion of rank could be very useful in that context.

To summarize, for us, there were three main properties expected from the new notion of rank:
being finite if and only if the theory is NSOP$_1$,
being related to Kim-independence,
and having a prospective use in the development of generics in NSOP$_1$ groups.
Here is what we managed to obtain so far.
Our notion of rank (Definition \ref{def:Qrank}) is local and depends on pairs consisting of a formula and a global type.
It has all the usual properties of a rank and it is finite, provided the theory $T$ is NSOP$_1$.
Nevertheless, it is also finite outside the class of NSOP$_1$ theories (e.g. in DLO, see Example \ref{ex:Qrank.finite}), 
which was not expected, but makes the rank more interesting outside of the class of NSOP$_1$ theories (more on that in Section \ref{sec:beyond}).
To obtain a rank which is related to the notion of Kim-independence, we follow some ideas from the doctoral thesis of Hans Adler (see Section 2.4 in \cite{HansThesis}). More precisely, our rank is not a foundation rank (i.e. defined recursively), but a rank which is witnessed by our account on \emph{dividing patterns} (\cite{HansThesis}) related to Kim-dividing given as in \cite{kaplanramsey2017}.
In Section \ref{sec:rank.ind}, we explain the connection between our rank and Kim-independence, which depends on some notions of \emph{stationarity}.

After noticing that our notion of rank might be finite outside of NSOP$_1$, we investigated behaviour of the rank in the case of NTP$_2$ theories. It turned out that a slight modification of the main definition (compare Definition \ref{def:Qrank} and Definition \ref{def:Qrank2}) results in finiteness of the rank in any theory of finite dp-rank. Moreover, the modified rank is bounded by the inp-rank and hence by the dp-rank. On top of that, the aforementioned modification does not affect the notion of our rank in the case of NSOP$_1$ theories. 
Therefore, in this paper, we provide a notion of local rank which shares finiteness in two opposite corners of the stability hierarchy, which seems to be quite intriguing. Perhaps, a good notion of rank will be more  suitable to work across different dividing lines in the neo-stability hierarchy than a notion of independence (like, for example,  thorn-independence).
On top of that, to provide the definition of ranks in Section \ref{sec:beyond}, we introduce  \emph{semi-global types} and show Kim's lemma for Kim-dividing witnessed via sequences in semi-global types.

In Section \ref{sec:bilinear} we study forking in the theory $T_\infty$ of infinite-dimensional vector spaces over an algebraically closed field with a generic bilinear form, which is one of the main algebraic examples of a non-simple NSOP$_1$ theory.
We describe forking of formulae in $T_\infty$, answering in particular a question from \cite{Granger} about equivalence of dividing and dividing finitely.

The paper is organized as follows. In Section \ref{sec:basics}, we recall definitions and several facts needed later. Section \ref{sec:rank} contains the definition and some basic properties of the new rank; then the new rank is related to the notion of Kim-independence. 
Section \ref{sec:bilinear} is focused on forking in the theory $T_{\infty}$ and verifies auxiliary notions introduced in the previous section.
In Section \ref{sec:examples}, we collect more examples of NSOP$_1$ theories and we discuss the three relevant notions of independence.
Finally, in Section \ref{sec:beyond}, we go with our rank beyond the class of NSOP$_1$ theories.

We thank Zo\'{e} Chatzidakis, Itay Kaplan, Byughan Kim and Tomasz Rzepecki
for helpful discussions and comments.

\section{Basics about NSOP$_1$}\label{sec:basics}
As usual, we work with a complete $\CL$-theory $T$ and with a monster model $\FC$ of $T$, i.e. a $\kappa$-strongly homogeneous and $\kappa$-saturated model of $T$ for some big cardinal $\kappa$. By a small tuple/subset/substructure we mean some tuple/subset/substructure of size strictly smaller than $\kappa$. Unless stated otherwise, all considered tuples/subsets/substructures will be small. For example $A\subseteq\FC$ tacitly implies that $|A|<\kappa$.
In short, we follow conventions being standard in model theory, e.g. outlined in \cite{casasimpl} and \cite{tentzieg}.

At the beginning, we need to evoke several definitions and facts about Kim-dividing and NSOP$_1$ theories.
A reader unfamiliar with the subject may consult e.g. \cite{kaplanramsey2017}, \cite{DKR} and \cite{CKR}.

\begin{definition}
A formula $\varphi(x;y)$ has SOP$_1$ (\emph{Strict Order Property of the first kind}) if there exists a collection of tuples $(a_{\eta})_{\eta\in 2^{<\omega}}$ such that
\begin{enumerate}
    \item $\{\varphi(x;a_{\eta|_{\alpha}})\;|\;\alpha<\omega\}$ is consistent for every $\eta\in 2^\omega$,
    \item if $\eta\in 2^{<\omega}$ and $\nu \trianglerighteq\eta\smallfrown\langle 0\rangle$ then
    $\{\varphi(x;a_{\nu}),\,\varphi(x;a_{\eta\smallfrown\langle 1\rangle})\}$ is inconsistent,
\end{enumerate}
where $\trianglelefteq$ denotes the tree partial order on $2^{<\omega}$. The theory $T$ has \emph{SOP$_1$} if there is a formula which has SOP$_1$. Otherwise we say that $T$ is \emph{NSOP$_1$}.
\end{definition}

\begin{definition}
We say that $T$ \emph{enjoys the existence axiom for forking independence} if for each set $A$ and each tuple $b$ we have that $b\ind_A A$.
If $T$ is NSOP$_1$ and enjoys the existence axiom for forking independence, then we say that \emph{$T$ is NSOP$_1$ with existence}.
\end{definition}

\begin{definition}[Morley sequence in a type]
Let $A\subseteq\FC$, $p(y)\in S(A)$, and let $(I,<)$ be a linearly ordered set.
We call a sequence $(b_i)_{i\in I}$ \emph{a Morley sequence in $p$} if
\begin{enumerate}
    \item $b_i\ind_A b_{<i}$ for each $i\in I$,
    \item $(b_i)_{i\in I}$ is $A$-indiscernible,
    \item $b_i\models p$ for each $i\in I$.
\end{enumerate}
\end{definition}

\begin{definition}[Morley sequence in an invariant global type]\label{def:morley.global}
Assume that $q(y)\in S(\FC)$ (i.e. $q$ is a global type) is $A$-invariant and let $(I,<)$ be a linearly ordered set.
By a \emph{Morley sequence in $q$ over $A$ (of order type $I$)} 
we understand a sequence $\bar{b}=(b_i)_{i\in I}$ such that $b_i\models q|_{Ab_{<i}}$ for each $i\in I$.
By $q^{\otimes I}$ we indicate a global $A$-invariant type in variables $(x_i)_{i\in I}$
such that for any $B\supseteq A$, 
if $\bar{b}\models q^{\otimes I}|_B$ then then $b_i\models q|_{Bb_{<i}}$ for all $i\in I$.
\end{definition}
Therefore, if $q(y)\in S(\FC)$ is $A$-invariant, we have two possible notions of a Morley sequence:
a Morley sequence in $q|_A$ and a Morley sequence in $q$ over $A$. Of course, a Morley sequence in $q$ over $A$ is a Morley sequence in $q|_A$. The converse does not hold in general.

\begin{definition}
Let $q(y)$ be an $A$-invariant global type.
We say that a formula $\varphi(x;y)$ $q$-divides over $A$ if for some (equivalently: any)
Morley sequence $(b_i)_{i<\omega}$ in $q$ over $A$, the set $\{\varphi(x,b_i)\;|\;i<\omega\}$ is inconsistent.
\end{definition}

We have the following two notions of \emph{Kim-dividing}, (A) appears in \cite{DKR}
and (B) appear in \cite{kaplanramsey2017}.  
\textbf{For us notion (A) is the one which we will use here}.
However, Theorem 7.7 from \cite{kaplanramsey2017} implies that (A) and (B) coincide over a model (i.e. if $A=M\preceq\FC$) provided $T$ is NSOP$_1$,
and therefore we will switch very often to notion (B) if the situation is placed over a model.

\begin{definition}\label{def:Kim.div}
Let $A\subseteq\FC$.
\begin{enumerate}
\item[(A)]
Assume that $k\in\mathbb{N}\setminus\{0\}$.
We say that $\varphi(x,b)$ \emph{$k$-Kim-divides over $A$} if there exists
a Morley sequence $(b_i)_{i<\omega}$ in $\tp(b/A)$ such that
$\{\varphi(x,b_i)\;|\;i<\omega\}$ is $k$-inconsistent.
We say that $\varphi(x,b)$ \emph{Kim-divides over $A$} if there exists $k\in\mathbb{N}\setminus\{0\}$ such that $\varphi(x,b)$ $k$-Kim-divides over $A$.

\item[(B)]
We say that $\varphi(x;b)$ \emph{Kim-divides over $A$} if there exists an $A$-invariant global type $q(y)\supseteq\tp(b/A)$ such that $\varphi(x;y)$ $q$-divides over $A$.
Equivalently, we say that $\varphi(x;b)$ Kim-divides over $A$ if there exists an $A$-invariant global type $q(y)\supseteq\tp(b/A)$ and a Morley sequence $(b_i)_{i<\omega}\models q^{\otimes\omega}|_A$ such that $b_0=b$ and the set $\{\varphi(x,b_i)\;|\;i<\omega\}$ is inconsistent.
\end{enumerate}
\end{definition}

\begin{fact}[Kim's lemma over models]\label{fact:Kims.lemma}
Assume that $T$ has NSOP$_1$ and let $M\preceq\FC$. The following are equivalent:
\begin{enumerate}
\item $\varphi(x;b)$ Kim-divides over $M$,
\item for any $M$-invariant global type $q(y)\supseteq\tp(b/M)$ and any Morley sequence $(b_i)_{i<\omega}\models q^{\otimes\omega}|_M$ such that $b_0=b$ we have that the set $\{\varphi(x,b_i)\;|\;i<\omega\}$ is inconsistent.
\end{enumerate}
\end{fact}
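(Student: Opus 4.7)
The plan is to handle the two directions separately. The implication $(2)\Rightarrow(1)$ is immediate: over a model, any type admits an $M$-invariant global extension (for example any global coheir of $\tp(b/M)$), so picking such a $q$ and a Morley sequence $(b_i)_{i<\omega}$ in $q$ over $M$ with $b_0=b$, assumption $(2)$ yields inconsistency of $\{\varphi(x,b_i):i<\omega\}$, which is exactly what definition (B) of Kim-dividing requires.

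For the nontrivial direction $(1)\Rightarrow(2)$ I would argue by contraposition. Assume $\varphi(x;b)$ Kim-divides over $M$ witnessed by an $M$-invariant global $q_0(y)\supseteq\tp(b/M)$ and a Morley sequence $(b_i)_{i<\omega}$ in $q_0$ over $M$ making $\{\varphi(x,b_i):i<\omega\}$ $k$-inconsistent. Suppose toward contradiction that there is another $M$-invariant global type $q(y)\supseteq\tp(b/M)$ and a Morley sequence $(c_i)_{i<\omega}$ in $q$ over $M$ with $c_0=b$ for which $\{\varphi(x,c_i):i<\omega\}$ is consistent. The goal is to produce an SOP$_1$ configuration for $\varphi(x;y)$, contradicting the assumption that $T$ is NSOP$_1$.

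The SOP$_1$ tree $(a_\eta)_{\eta\in 2^{<\omega}}$ should be built level by level by blending both sequences. At each node, the right-successor direction is supplied by a fresh conjugate of the $q$-Morley sequence (over a base accumulating the path data), which guarantees consistency along every branch and thus clause $(1)$ of the SOP$_1$ definition. The left-successor direction is then augmented by splicing in, via invariance of $q_0$, the beginning of a fresh $q_0$-Morley sequence; its $k$-inconsistency delivers clause $(2)$ of the SOP$_1$ definition for any $\nu\trianglerighteq\eta\smallfrown\langle 0\rangle$ against $a_{\eta\smallfrown\langle 1\rangle}$. A standard Ramsey-style extraction then produces a sufficiently tree-indiscernible subtree on which both properties hold uniformly.

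The main obstacle is the combinatorial organization of the tree: one must arrange the interleaving so that path-consistency (from $q$) and transversal $k$-inconsistency (from $q_0$) both survive after the indiscernibility extraction, and so that the resulting pattern matches exactly the one the definition of SOP$_1$ demands. This is the zig-zag core of the argument; once it is in place, the rest reduces to routine bookkeeping with invariance and Morley sequences over a model.
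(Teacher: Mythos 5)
The paper supplies no proof of this statement: it is taken from Kaplan--Ramsey, where the invariant-type version is Theorem 3.16 and the bridge between notions (A) and (B) over a model is Theorem 7.7. Your $(2)\Rightarrow(1)$ direction is fine --- over a model a coheir of $\tp(b/M)$ is always available as the needed $M$-invariant extension.

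For $(1)\Rightarrow(2)$ two genuine gaps remain. First, item (1) invokes the paper's default notion (A) of Kim-dividing, witnessed by an arbitrary $\ind$-Morley sequence in $\tp(b/M)$, but you start from a Morley sequence in an $M$-invariant global type $q_0$, i.e.\ notion (B). Upgrading (A) to (B) over a model is exactly KR's Theorem 7.7, whose proof already uses the invariant-type Kim's lemma; so if (1) is read in sense (A) you risk circularity, and if in sense (B) you should say so and invoke the (A)/(B) equivalence separately. Second, and more importantly, the tree construction is the entire content of the lemma and your description of it is not yet a construction. SOP$_1$ clause (1) demands $\varphi$-consistency along every branch, including branches that take arbitrarily many left-turns; if left-successors are drawn from $q_0$-Morley data as your ``splicing'' suggests, the all-zero branch carries a $q_0$-Morley sequence of $\varphi$-instances, which is inconsistent by hypothesis, so branch-consistency fails. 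Moreover SOP$_1$ clause (2) is a pairwise inconsistency, so one must first boost $k$-inconsistency to $2$-inconsistency by replacing $\varphi$ with a conjunction over a $q_0^{\otimes(k-1)}$-block (and each $q_i$ with $q_i^{\otimes(k-1)}$); this step is missing. Arranging for every path to be $\varphi$-consistent via $q$ while sibling pairs carry the boosted $2$-inconsistency from $q_0$, and preserving both features through the indiscernible-tree extraction, is precisely what KR Proposition 3.15 accomplishes; as you note, that zig-zag is the core, but it is the whole proof rather than bookkeeping, and the proposal does not yet supply it.
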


Later on, we will define a local rank related to the notion of Kim-dividing.
Our rank will focus on Kim-dividing over models, so one could ask how much of the picture is lost if we restrict our attention only to Kim-dividing over models.
First, let us make an observation easily following from the definition: 
if $\varphi(x,b)$ Kim-divides over $A$ with respect to definition (B) and $A\subseteq B$
then there exists $c\equiv_A b$ such that $c\ind_A B$ and such that $\varphi(x,c)$ Kim-divides over $B$ with respect to definition (B). This means that passing to Kim-dividing over models is not so harmful if we decide to work with the definition (B). The following lemma shows the same for the notion of Kim-dividing from the definition (A). The proof is not a surprise in any meaning, but let us follow the argument for a little warm-up.

\begin{lemma}[any $T$]\label{lemma:base.change}
Let $A\subseteq B$, $\varphi(x;y)\in\CL(A)$.
If $\varphi(x,a)$ $k$-Kim-divides over $A$, then there exists $c\equiv_A a$ such that $c\ind_A B$ and
$\varphi(x,c)$ $k$-Kim-divides over $B$.
\end{lemma}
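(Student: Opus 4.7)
The plan is to begin with the witnessing Morley sequence of $\varphi(x,a)$ over $A$, stretch it to a very long Morley sequence in $\tp(a/A)$, reposition it via the extension axiom for $\ind$ so that it becomes forking-independent from $B$ over $A$, and then extract a $B$-indiscernible sub-sequence by Erd\H{o}s--Rado; the first element of the extracted sub-sequence will serve as the desired $c$.

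In detail, first fix a Morley sequence $(a_i)_{i<\omega}$ in $p:=\tp(a/A)$ such that $\{\varphi(x,a_i)\}_{i<\omega}$ is $k$-inconsistent. Using compactness together with iterated application of the extension axiom, plus Ramsey to restore $A$-indiscernibility, I would extend this to a Morley sequence $(a_i)_{i<\kappa}$ in $p$ with $\kappa:=\beth_{(2^{|T|+|B|})^+}$; by $A$-indiscernibility the set $\{\varphi(x,a_i)\}_{i<\kappa}$ remains $k$-inconsistent. Next, applying the extension axiom to the whole tuple, I obtain $(a'_i)_{i<\kappa}\equiv_A(a_i)_{i<\kappa}$ with $(a'_i)\ind_A B$; since $\varphi\in\CL(A)$, the sequence $(a'_i)$ is still Morley in $p$ and still $k$-inconsistent in the $\varphi$-formulas. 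Finally, by Erd\H{o}s--Rado, I extract a $B$-indiscernible sequence $(c_i)_{i<\omega}$ such that for every $n<\omega$ there exist $i_0<\cdots<i_{n-1}<\kappa$ with $(c_0,\ldots,c_{n-1})\equiv_B(a'_{i_0},\ldots,a'_{i_{n-1}})$, and I set $c:=c_0$.

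The easy conclusions then read off directly. From $c\equiv_B a'_{i_0}$ and $a'_{i_0}\models p$ we get $c\equiv_A a$; from $\tp(c/AB)=\tp(a'_{i_0}/AB)$ and $a'_{i_0}\ind_A B$ (by left-monotonicity from $(a'_i)\ind_A B$) we get $c\ind_A B$; and $\{\varphi(x,c_i)\}_{i<\omega}$ is $k$-inconsistent because any $k$-subtuple of $(c_i)$ is $A$-conjugate to a $k$-subtuple of $(a'_i)$, for which $\varphi$ is already $k$-inconsistent. The main obstacle is verifying that $(c_i)$ is not merely $B$-indiscernible but a bona fide Morley sequence in $\tp(c/B)$, i.e.\ that $c_l\ind_B c_{<l}$ for each $l<\omega$. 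Via the EM-type extraction this reduces to showing $a'_{i_l}\ind_B a'_{i_0}\ldots a'_{i_{l-1}}$ for the extracted sub-tuples, which is where the Morley property $a'_j\ind_A a'_{<j}$ over $A$ has to be combined with the joint independence $(a'_j)_{j<\kappa}\ind_A B$: any formula in $\tp(a'_{i_l}/Ba'_{i_0}\ldots a'_{i_{l-1}})$ dividing over $B$ would, by lifting its witnessing $B$-indiscernible sequence through the $A$-indiscernibility of $(a'_j)$ and the independence from $B$, contradict $a'_{i_l}\ind_A a'_{<i_l}$.
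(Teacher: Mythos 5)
The overall skeleton (stretch the sequence, reposition it to be independent from $B$ over $A$, extract by Erd\H{o}s--Rado) matches the paper. But there is a genuine gap in your Step~2, and it propagates into the final verification. You propose to ``apply the extension axiom to the whole tuple'' $(a_i)_{i<\kappa}$ to produce $(a'_i)\ind_A B$. This requires knowing $\tp\big((a_i)_{i<\kappa}/A\big)$ does not fork over $A$, i.e.\ $(a_i)_{i<\kappa}\ind_A A$. That does \emph{not} follow from the Morley property $a_i\ind_A a_{<i}$: turning pairwise independence of an $A$-indiscernible sequence into joint independence of the whole tuple over $A$ is exactly the kind of transitivity that fails in an arbitrary theory, and the lemma is stated for arbitrary~$T$. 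The paper avoids this by repositioning the sequence one element at a time, via a recursion on partial elementary maps $f_\alpha$, applying the extension axiom at each step only to a single coordinate (using $b_{\alpha+1}\ind_A Ab_{\leqslant\alpha}$, which \emph{is} available from the Morley property) and thereby securing the stronger conclusion $b'_{\alpha}\ind_A Bb'_{<\alpha}$ by construction. Your single application of extension, even if it could be made, would only give $(a'_i)\ind_A B$, which is strictly weaker.

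This weaker conclusion is precisely why your last paragraph does not close. You correctly identify that the hard point is showing the extracted $(c_i)$ is Morley over $B$, i.e.\ $c_l\ind_B c_{<l}$, which reduces to $a'_{i_l}\ind_B a'_{i_0}\ldots a'_{i_{l-1}}$. But the argument you sketch — that a dividing formula over $B$ would, ``by lifting its witnessing $B$-indiscernible sequence through the $A$-indiscernibility of $(a'_j)$ and the independence from $B$,'' contradict $a'_{i_l}\ind_A a'_{<i_l}$ — is not a proof. Dividing over $B$ does not give dividing over $A$, and combining $(a'_j)\ind_A B$ with $a'_j\ind_A a'_{<j}$ to obtain $a'_j\ind_A Ba'_{<j}$ (from which the desired independence over $B$ follows by base monotonicity and monotonicity) again needs transitivity-type reasoning unavailable in general. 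The paper's recursion hands you $b'_{\alpha}\ind_A Bb'_{<\alpha}$ directly, and then only invariance, monotonicity, and base monotonicity — all valid in any theory — are needed to conclude. To repair your proof you should replace the one-shot extension with the element-by-element construction, after which the final check becomes routine as in the paper.
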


\begin{proof}
By the definition of $k$-Kim-dividing, there exists an $A$-indiscernible sequence $(a_i)_{i<\omega}$ such that $a_0=a$, for each $i<\omega$ we have $a_i\ind_A Aa_{<i}$ and $\{\varphi(x,a_i)\;|\;i<\omega\}$ is $k$-inconsistent.
We will use several properties of $\ind$, which hold in any theory $T$, these are listed e.g. in Remark 5.3 in \cite{casasimpl}.

\textbf{Step 1:} \textit{increase the length of $(a_i)_{i<\omega}$ to a big cardinal $\lambda$ (big enough for the use of Erd\H{o}s-Rado theorem for $B$-indiscernibility, see e.g. Propostion 1.6 in \cite{casasimpl})}. To do it, consider:
$$p(\bar{x})=p\big((x_{\alpha})_{\alpha<\lambda} \big):=\bigcup\limits_{\substack{
n<\omega \\ \alpha_0<\ldots<\alpha_n<\lambda}}\tp(a_0\ldots a_n/Aa_0)[x_{\alpha_0},\ldots,x_{\alpha_n}].$$
Let $\bar{b}=(b_{\alpha})_{\alpha<\lambda}\models p(\bar{x})$, then $\bar{b}$ is $A$-indiscernible, $b_0=a_0=a$,
$\{\varphi(x,b_{\alpha}\;|\;\alpha<\lambda\}$ is $k$-inconsistent. Invariance and finite character of $\ind$ imply that also $b_{\alpha}\ind_A Ab_{<\alpha}$ for each $\alpha<\lambda$.

\textbf{Step 2:} \textit{force $\ind$-independence over $B$}.
We define recursively partial elementary over $A$ maps $f_{\alpha}:\dcl(Ab_{\leqslant\alpha})\to\FC$, where $\alpha<\lambda$, such that
\begin{itemize}
    \item $f_{\alpha+1}|_{\dcl(Ab_{\leqslant\alpha})}=f_{\alpha}|_{\dcl{Ab_{\leqslant\alpha})}}$,
    \item $f_{\alpha}(b_{\alpha})\ind_A Bf_{\alpha}(b_{<\alpha})$,
\end{itemize}
for each $\alpha<\lambda$. We start with obtaining $f_0$.
Since $b_0\ind_A A$, $\tp(b_0/A)$ does not fork over $A$ and so there exists a non-forking extension:
$$\tp(b_0/A)\subseteq \tp(b_0'/B).$$
Let $f_0$ be determined by $f_0(b_0)=b_0'$ and $f_0|_A=\id_A$. Now, we deal with the successor step $\alpha\rightsquigarrow\alpha+1$.
Let $f_{\alpha}'\in\aut(\FC)$ extend $f_{\alpha}$.
Because $b_{\alpha+1}\ind_A Ab_{\leqslant\alpha}$, we have that $f'_{\alpha}(b_{\alpha+1})\ind_A Af_{\alpha}(b_{\leqslant\alpha})$.
There exists an extension
$$\tp\Big(f'_{\alpha}(b_{\alpha+1})/Af_{\alpha}(b_{\leqslant\alpha})\Big)\subseteq\tp\Big(b'_{\alpha+1}/Bf_{\alpha}(b_{\leqslant\alpha})\Big),$$
which does not fork over $A$, i.e. $b'_{\alpha+1}\ind_A Bf_{\alpha}(b_{\leqslant\alpha})$.
Let $f_{\alpha+1}$ be determined by $f_{\alpha+1}(b_{\alpha+1})=b_{\alpha+1}'$ and $f_{\alpha+1}|_{\dcl(Ab_{\leqslant\alpha})}=f_{\alpha}|_{\dcl(Ab_{\leqslant\alpha})}$.
We move on to the limit ordinal step: $\beta<\lambda$ and $\beta\in\Lim$.
Let $f^-_{\beta}\in\aut(\FC)$ extend $\bigcup_{\alpha<\beta}f_{\alpha}$. From $b_{\beta}\ind_A Ab_{<\beta}$ we obtain
$f^-_{\beta}(b_{\beta})\ind_A A(f_{\alpha}(b_{\alpha}))_{\alpha<\beta}$, so, again, we can find an extension
$$\tp\Big(f^-_{\beta}(b_{\beta})/A(f_{\alpha}(b_{\alpha}))_{\alpha<\beta}\Big)\subseteq\tp(b'_{\beta}/B(f_{\alpha}(b_{\alpha}))_{\alpha<\beta}\Big)$$
which does not fork over $A$, i.e. $b'_{\beta}\ind_A B(f_{\alpha}(b_{\alpha}))_{\alpha<\beta}$.
Let $f_{\beta}$ be determined by $f_{\beta}(b_{\beta})=b'_{\beta}$ and $f_{\beta}|_{\dcl(Ab_{<\beta})}=f^-_{\beta}|_{\dcl(Ab_{<\beta})}$.
After the recursion is done, we take $f\in\aut(\FC)$ which extends $\bigcup_{\alpha<\lambda}f_{\alpha}$ and set $(b'_{\alpha})_{\alpha<\lambda}=(f(b_{\alpha}))_{\alpha<\lambda}$.

\textbf{Step 3:} \textit{Erd\H{o}s-Rado}.
By Erd\H{o}s-Rado theorem (e.g. Proposition 1.6 in \cite{casasimpl}), there exists a $B$-indiscernible sequence $(c_i)_{i<\omega}$
such that for each $n<\omega$ there exist $\alpha_0<\ldots<\alpha_n<\lambda$ satisfying
$$c_0\ldots c_n\equiv_B b'_{\alpha_0}\ldots b'_{\alpha_n}.$$
Thus
\begin{itemize}
    \item $c_0\equiv_B b'_{\alpha_0}\equiv_A b'_0\equiv_A b_0=a$,
    \item $\{\varphi(x,c_i)\;|\;i<\omega\}$ is $k$-inconsistent (since $\bar{b}'\equiv_A\bar{b}$ and $\{\varphi(x,b_{\alpha})\;|\;\alpha<\lambda\}$ is $k$-inconsistent),
    \item $c_n\ind_B Bc_0\ldots c_{n-1}$ (because $c_0\ldots c_n\equiv_B b'_{\alpha_0}\ldots b'_{\alpha_n}$ and $b'_{\alpha_n}\ind_A Bb'_{<\alpha_n}$, and $\ind$ satisfies monotonicity, base-monotonicity and invariance).
\end{itemize}
Therefore $\varphi(x,c_0)$ $k$-Kim-divides over $B$ and $c_0\equiv_A a$. 

For some $\alpha_0<\lambda$ we have that $c_0\equiv_B b'_{\alpha_0}$. Then, because $b'_{\alpha_0}\ind_A Bb'_{<\alpha_0}$ and because $\ind$ satisfies monotonicity and invariance, we obtain that also $c_0\ind_A B$.
\end{proof}

\begin{definition}
\begin{enumerate}
    \item A partial type $\pi(x)$ \emph{Kim-forks} over $A$ if 
    $$\pi(x)\vdash\bigvee\limits_{j\leqslant n}\psi_j(x;b_j)$$
    for some $n<\omega$ and $\psi_j(x;b_j)$ Kim-divides over $A$ for each $j\leqslant n$.

    \item Let $a$ be a tuple from $\FC$ and let $A,B\subseteq\FC$. 
    We say that \emph{$a$ is Kim-independent from $B$ over $A$}, denoted by $a\ind^K_A B$, if
    $\tp(a/AB)$ does not Kim-fork over $A$.
\end{enumerate}
\end{definition}

\noindent
One could redefine the notions of \emph{Kim-forking} and \emph{Kim-independence} using Kim-dividing with respect to definition (B). In such a case, we will always indicate that we work with Kim-forking with respect to definition (B) or use $\ind^{K,q}$ to denote Kim-independence defined with Kim-dividing with respect to definition (B).

The previous lemma easily leads to the following.

\begin{corollary}\label{cor:easy}
Let $A\subseteq B$, $a,d\in\FC$.
\begin{enumerate}
    \item If $\varphi(x,a)$ Kim-divides over $A$, then there exists $c\equiv_A a$ such that $c\ind_A B$ and $\varphi(x,c)$ Kim-divides over $B$.
    
    \item If for all $c\equiv_A a$ such that $c\ind_A B$, the formula $\varphi(x,c)$ does not Kim-divide over $B$, then
    $\varphi(x,a)$ does not Kim-divide over $A$.
    
    \item 
    If for all $d'\equiv_A d$ and all $c\equiv_A a$ such that $c\ind_A B$ we have $d'\ind^K_B c$ then $d\ind^K_A a$.
\end{enumerate}
\end{corollary}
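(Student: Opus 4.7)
Parts (1) and (2) will follow immediately from Lemma~\ref{lemma:base.change}: if $\varphi(x,a)$ Kim-divides over $A$ it $k$-Kim-divides for some $k$, so the lemma supplies the $c$ required in (1), and (2) is its contrapositive.

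For (3) I will prove the contrapositive. Assuming $d \not\ind^K_A a$, fix a formula $\psi(x,a) \in \tp(d/Aa)$ and witnesses $\psi_0(x,b_0),\ldots,\psi_n(x,b_n)$, each Kim-dividing over $A$, with $\psi(x,a) \vdash \bigvee_{j \le n} \psi_j(x,b_j)$. The plan is to exhibit an automorphism $\sigma \in \aut(\FC/A)$ sending the combined tuple $\bar e := (a,b_0,\ldots,b_n)$ to some $\bar e' = (c,c_0,\ldots,c_n)$ satisfying simultaneously (i) $\bar e' \ind_A B$ and (ii) $\psi_j(x,c_j)$ Kim-divides over $B$ for every $j \le n$. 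Setting $d' := \sigma(d)$, the conclusion then drops out: $d' \equiv_A d$, $c \equiv_A a$, $c \ind_A B$ by monotonicity of $\ind$, and the entailment $\psi(x,c) \vdash \bigvee_j \psi_j(x,c_j)$ together with (ii) shows $\psi(x,c) \in \tp(d'/Bc)$ Kim-forks over $B$, so $d' \not\ind^K_B c$, contradicting the hypothesis.

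To construct $\sigma$ I will imitate the three-step proof of Lemma~\ref{lemma:base.change} with the combined tuple $\bar e$ replacing the single tuple $a$. In Step~1 the separately given $A$-indiscernible Morley sequences $(b_j^{(i)})_{i<\omega}$ witnessing $k_j$-Kim-dividing of each $\psi_j(x,b_j)$ must be merged into a single long $A$-indiscernible sequence $(\bar e_\alpha)_{\alpha<\lambda}$ of realizations of $\tp(\bar e/A)$, with $\bar e_0 = \bar e$, $\bar e_\alpha \ind_A \bar e_{<\alpha}$, and whose $(j+1)$-th coordinate sequence still witnesses $k_j$-inconsistency of $\{\psi_j(x,-)\}$ for every $j$ simultaneously. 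Steps~2 and~3 of the lemma -- iteratively replacing each term by a non-forking copy over $B\bar e'_{<\alpha}$, then extracting a $B$-indiscernible sequence via Erd\H{o}s--Rado -- will then carry over verbatim, and the zeroth term of the resulting $B$-indiscernible sequence is the desired $\bar e'$.

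The hard part is Step~1: the witnessing Morley sequences for distinct $j$ are chosen independently, so their joint behaviour need not match $\tp(\bar e/A)$; coordinating them into one tuple-indiscernible sequence which preserves $k_j$-inconsistency on every coordinate at the same time requires care. I expect to achieve this by iterating non-forking extensions of $\tp(\bar e/A)$ (which exist since $\bar e \ind_A A$ holds in any theory), using the given witness sequences to drive the choice of each successive tuple, and then applying a Ramsey/Erd\H{o}s--Rado extraction to restore $A$-indiscernibility without destroying the $k_j$-inconsistencies. Once Step~1 is in place, the rest of the argument mirrors the proof of Lemma~\ref{lemma:base.change}.
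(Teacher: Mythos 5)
Parts (1) and (2) are correct and immediate: Kim-dividing is $k$-Kim-dividing for some $k$, Lemma~\ref{lemma:base.change} supplies the required $c$, and (2) is the contrapositive of (1). This matches the paper's intent.

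For part (3) your overall strategy (argue the contrapositive, produce $\sigma\in\aut(\FC/A)$ sending $(a,b_0,\dots,b_n)$ to $(c,c_0,\dots,c_n)$ with $c\ind_A B$ and each $\psi_j(x,c_j)$ Kim-dividing over $B$, then read off $d'\nind^K_B c$ from $\psi(x,c)\vdash\bigvee_j\psi_j(x,c_j)$) is the natural route, and the final deduction of a contradiction is correct as you state it. The construction of $\sigma$, however, contains two genuine gaps. First, the parenthetical claim that $\bar e\ind_A A$ ``holds in any theory'' is false: this is exactly the existence axiom for forking, which is not a theorem of arbitrary $T$ (it is open even for all NSOP$_1$ theories). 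The hypothesis $d\nind^K_A a$ only gives you Morley sequences in each $\tp(b_j/A)$, hence $b_j\ind_A A$ for each $j$; it gives you nothing about $\tp(a/A)$ or the joint $\tp(\bar e/A)$, so you cannot launch the recursion of non-forking extensions that you propose. Second, even granting existence for $\tp(\bar e/A)$, your Step~1 ``merging'' is not merely delicate but unsound in general: the separately given witness sequences $(b^{(i)}_j)_i$ have no control on their joint type, so you cannot place them as the coordinates of tuples realizing $\tp(\bar e/A)$; and if instead you build an arbitrary Morley sequence $(\bar e_\alpha)$ in $\tp(\bar e/A)$ and project, the projections are Morley sequences in $\tp(b_j/A)$, but there is no reason in an arbitrary theory $T$ for them to reproduce the $k_j$-inconsistency of $\{\psi_j(x,\cdot)\}$ --- that is precisely the content of Kim's lemma (Theorems~\ref{Kim.lemma.model}, \ref{Kim.lemma.arbitrary}), which is not available here since the corollary carries no NSOP$_1$ assumption. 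Erd\H{o}s--Rado extraction preserves $k_j$-inconsistencies once you have a suitable long sequence, but the problem is producing that sequence in the first place, and ``driving the choice by the witness sequences'' does not resolve it. In short: (1)--(2) are fine; (3) needs a substantially more careful argument (or additional hypotheses) to justify the multi-formula analogue of Lemma~\ref{lemma:base.change} that your proof relies on.
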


The following characterization of NSOP$_1$ proved much more useful in studying Kim-independence than the original definition of NSOP$_1$ introduced in \cite{sheDza}.

\begin{theorem}[Theorem 8.1 in \cite{kaplanramsey2017}]\label{Kim.lemma.model}
The following are equivalent.
\begin{enumerate}
    \item $T$ is NSOP$_1$.
    
    \item \emph{Kim's lemma for Kim-dividing}:
    For any $M\preceq\FC$ and any $\varphi(x;b)$, if $\varphi(x;y)$ $q$-divides over $M$ for \textbf{some} $M$-invariant $q(y)\in S(\FC)$ with $\tp(b/M)\subseteq q(y)$, then 
    $\varphi(x;y)$ $q$-divides over $M$ for \textbf{any} $M$-invariant $q(y)\in S(\FC)$ with $\tp(b/M)\subseteq q(y)$.
\end{enumerate}
\end{theorem}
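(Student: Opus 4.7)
The statement is Theorem 8.1 of \cite{kaplanramsey2017}, and the plan is to prove both implications by contrapositive, the principal tools being the modelling property for trees and the passage between Morley sequences in invariant types and paths of tree-Morley sequences.

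\textbf{(1) $\Rightarrow$ (2):} Contrapositively, suppose there exist $M$-invariant global types $q_0,q_1 \supseteq \tp(b/M)$ and Morley sequences $(b^0_i)_{i<\omega} \models q_0^{\otimes\omega}|_M$ and $(b^1_i)_{i<\omega} \models q_1^{\otimes\omega}|_M$ such that $\{\varphi(x,b^0_i)\mid i<\omega\}$ is inconsistent while $\{\varphi(x,b^1_i)\mid i<\omega\}$ is consistent. The plan is to build a tree $(a_\eta)_{\eta\in 2^{<\omega}}$ witnessing SOP$_1$ for $\varphi(x;y)$ by recursion on the levels: at each node $\eta$, the $1$-successor is placed as the next element of a Morley sequence in $q_1$ (so that the family along any path $\eta\smallfrown\langle 1\rangle\smallfrown\langle 1\rangle\smallfrown\cdots$ stays consistent), while the $0$-successor together with the entire subtree below it is placed using $q_0$ (so that $\varphi(x,a_{\eta\smallfrown\langle 1\rangle})$ is inconsistent with $\varphi(x,a_\nu)$ for every $\nu\trianglerighteq\eta\smallfrown\langle 0\rangle$). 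Invariance of $q_0$ and $q_1$ over $M$ lets one use automorphisms fixing $M$ to transport initial segments of Morley sequences to the required positions in the tree without disturbing the already-constructed consistency/inconsistency data.

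\textbf{(2) $\Rightarrow$ (1):} Again by contrapositive, assume SOP$_1$ is witnessed by $\varphi(x;y)$ and some tree $(a_\eta)_{\eta\in 2^{<\omega}}$. The first step is to invoke the Ramsey/Erd\H{o}s--Rado tree-extraction of \cite{kaplanramsey2017} (modelling property for trees) to replace $(a_\eta)$ by a strongly indiscernible tree over a small $M\preceq\FC$ still witnessing SOP$_1$. The left spine $(a_{\langle 0\rangle^n})_{n<\omega}$ and the corresponding sequence of right siblings $(a_{\langle 0\rangle^n\smallfrown\langle 1\rangle})_{n<\omega}$ are then both $M$-indiscernible; by compactness combined with tree-indiscernibility, they extend to Morley sequences of two $M$-invariant global types $q_0,q_1$ both containing $\tp(a_\emptyset/M)$. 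The first Morley sequence consists of a path-consistent family, while the second, by the SOP$_1$ clause, forms an inconsistent set. Hence $\varphi(x;y)$ $q_1$-divides but does not $q_0$-divide over $M$, contradicting (2).

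\textbf{Main obstacle:} The technical heart lies in the tree construction for $(1)\Rightarrow(2)$, where one must simultaneously preserve path-consistency along every branch and sibling-inconsistency at every level, having only two one-dimensional Morley sequences as input. The right framework is that of Morley trees developed in Section 5 of \cite{kaplanramsey2017} (trees which are tree-indiscernible and whose branches are Morley sequences in prescribed invariant types); I would use that machinery as a black box rather than redo the interleaving argument by hand. The extraction in $(2)\Rightarrow(1)$, while less delicate, also depends on the same modelling property, which is where all the combinatorial work has been concentrated.
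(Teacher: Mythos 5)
The paper you are working from does not prove this theorem: it cites it directly as ``Theorem 8.1 in \cite{kaplanramsey2017}'' and uses it as a black box, so there is no ``paper's own proof'' to compare against. What you have written is an outline of the original Kaplan--Ramsey argument, not a competing proof.

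As an outline it is pointed in the right direction, but be aware that both implications are sketched at a level where the genuine content has been delegated to the reference, and the sketch of $(1)\Rightarrow(2)$ as stated would not work. You describe the construction as ``the $1$-successor is placed as the next element of a Morley sequence in $q_1$, while the $0$-successor together with the entire subtree below it is placed using $q_0$.'' But the SOP$_1$ inconsistency requirement is that $\varphi(x,a_{\eta\smallfrown\langle 1\rangle})\wedge\varphi(x,a_\nu)$ be inconsistent for all $\nu\trianglerighteq\eta\smallfrown\langle 0\rangle$, i.e.\ $a_{\eta\smallfrown\langle 1\rangle}$ and $a_\nu$ must together look like distinct terms of a $q_0$-Morley sequence. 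If $a_{\eta\smallfrown\langle 1\rangle}$ is ``from $q_1$'' there is no reason for this to hold. The actual construction in \cite{kaplanramsey2017} exploits $q_0|_M=q_1|_M$ to make the \emph{same} tree elements simultaneously realize the right restrictions of both types at the right positions, via a careful recursion with automorphisms over $M$; it is not a clean ``one type per successor'' split. You correctly flag that this is the hard part, but the description given is misleading about how it resolves.

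The $(2)\Rightarrow(1)$ direction is closer, but the step ``they extend to Morley sequences of two $M$-invariant global types $q_0,q_1$'' needs justification: an $M$-indiscernible sequence does not in general arise as a Morley sequence in a global $M$-invariant type for the $M$ you started with. In the standard argument one first extracts a strongly indiscernible tree, then chooses the base model $M$ \emph{afterwards} to contain an appropriate initial portion of the tree so that the remaining spine and sibling sequences become coheir (hence invariant-type Morley) sequences over $M$. Without that order of quantifiers the extraction does not go through. In short: the shape of your argument matches the source, but both of the named technical obstacles are genuinely where the proof lives, and as written the sketch papers over them rather than resolving them.
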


\noindent
Fact \ref{fact:Kims.lemma} has a generalization to Kim-dividing over arbitrary sets:

\begin{theorem}[Theorem 3.5 in \cite{DKR}]\label{Kim.lemma.arbitrary}
Let $T$ be NSOP$_1$ with existence.
Then $T$ satisfies \emph{Kim's lemma for Kim-dividing over arbitrary sets}:
if a formula $\varphi(x,b)$ Kim-divides over $A$ with respect to \textbf{some} Morley sequence in $\tp(b/A)$ then
the formula $\varphi(x,b)$ Kim-divides over $A$ with respect to \textbf{any} Morley sequence in $\tp(b/A)$.
\end{theorem}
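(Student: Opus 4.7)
The approach is to reduce to Kim's lemma over models (Fact~\ref{fact:Kims.lemma}) by using the existence axiom to enlarge $A$ to an appropriately independent model $M$, and then to transfer the witness of Kim-dividing across Lemma~\ref{lemma:base.change}.

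Let $(b_i)_{i<\omega}$ be a Morley sequence in $\tp(b/A)$ with $\{\varphi(x,b_i)\;|\;i<\omega\}$ being $k$-inconsistent, and let $(d_i)_{i<\omega}$ be another Morley sequence in $\tp(b/A)$; the goal is to show $\{\varphi(x,d_i)\;|\;i<\omega\}$ is inconsistent. I would first prolong $(d_i)_i$ to a long $A$-indiscernible sequence $(d_i)_{i<\lambda}$ with each $d_i\ind_A d_{<i}$, exactly as in Step~1 of the proof of Lemma~\ref{lemma:base.change}. Using the existence axiom, pick a model $M\supseteq A$ with $(d_i)_{i<\lambda}\ind_A M$. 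An Erd\H{o}s--Rado extraction, combined with the forking-calculus manipulations from Steps~2 and 3 of that proof, then produces an $M$-Morley sequence $(e_i)_{i<\omega}$ with $e_0\equiv_A b$ and $(e_0,\ldots,e_n)\equiv_A(d_{\alpha_0},\ldots,d_{\alpha_n})$ for some $\alpha_0<\cdots<\alpha_n<\lambda$; in particular, inconsistency of $\{\varphi(x,e_i)\}$ implies the desired inconsistency of $\{\varphi(x,d_i)\}$.

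In parallel, I would apply Lemma~\ref{lemma:base.change} to the witnessing sequence $(b_i)_i$ with target base $M$, obtaining $b^{*}\equiv_A b$ with $b^{*}\ind_A M$ such that $\varphi(x,b^{*})$ Kim-divides over $M$. The next step is to coordinate $b^{*}$ and $e_0$ as $M$-conjugates, so that $\varphi(x,e_0)$ also Kim-divides over $M$. Once this is in place, choose any $M$-invariant global type $q\supseteq\tp(e_0/M)$ (for instance, a coheir in $M$), and observe via Fact~\ref{fact:Kims.lemma} that the associated invariant Morley sequence witnesses inconsistency of $\varphi(x,\cdot)$; a coheir / Ehrenfeucht--Mostowski transfer from that invariant sequence to $(e_i)_i$ --- whose $M$-EM-type agrees with a finitely-satisfiable Morley sequence by $M$-indiscernibility and $\ind_A$-independence --- then gives inconsistency of $\{\varphi(x,e_i)\}$.

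The main obstacle is the coordination of $b^{*}$ and $e_0$: their common $A$-type together with $\ind_A$-independence from $M$ do not by themselves pin down their $M$-types, so one needs the stationarity-like behaviour of Kim-independence over models that is available in NSOP$_1$ with existence (cf.\ the Kim-independence machinery developed in \cite{DKR} and \cite{CKR}) to produce an $M$-automorphism sending one to the other. The subsequent passage from an invariant $M$-Morley sequence to the possibly non-invariant $(e_i)_i$ is likewise delicate, and together with the coordination step is where the full strength of the NSOP$_1$ + existence hypothesis is used.
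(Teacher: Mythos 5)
The paper does not prove this statement itself---it is imported as Theorem~3.5 from \cite{DKR}---so there is no ``paper proof'' to compare against; I will assess your argument on its own.

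Your proposed reduction to the model case runs into a genuine and unfixable gap exactly at the step you yourself flag as ``the main obstacle.'' After applying Lemma~\ref{lemma:base.change} to both the witnessing sequence $(b_i)_i$ and the test sequence $(d_i)_i$ with target base $M$, you obtain $b^{*}$ and $e_0$, both realizing $\tp(b/A)$ and both $\ind_A$-independent from $M$, with $\varphi(x,b^{*})$ Kim-dividing over $M$ and $(e_i)_i$ an $\ind$-Morley sequence over $M$. To close the argument you need $e_0\equiv_M b^{*}$, i.e.\ that these two nonforking extensions of $\tp(b/A)$ over $M$ are $M$-conjugate. There is no ``stationarity-like behaviour'' in NSOP$_1$ with existence that delivers this: nonforking (or Kim-nonforking) extensions over a model of a type over a smaller set are not unique in general, and this paper goes out of its way to make that failure explicit (see the definition of Kim-stationarity, Example~\ref{rank_drop}, and Proposition~\ref{prop:stationarity.bilinear}, where strong Kim-stationarity is shown to be a restrictive condition in $T_\infty$). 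Knowing $e_0\equiv_A b^{*}$ and that both are $\ind_A$-independent from $M$ simply does not determine $\tp(e_0/M)$ or $\tp(b^{*}/M)$, so there is no reason $\varphi(x,e_0)$ should Kim-divide over $M$, and the subsequent appeal to Kim's lemma over $M$ and the transfer to $(e_i)_i$ cannot get started.

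The secondary issue---transferring inconsistency from an $M$-invariant Morley sequence to the (possibly non-invariant) $\ind$-Morley sequence $(e_i)_i$---is not actually a problem, because a forking-Morley sequence over a model is a Kim-Morley sequence, and the strong form of Kim's lemma over models in NSOP$_1$ already covers Kim-Morley sequences; but this only helps once the coordination step is in place. In fact the route taken in \cite{DKR} (Theorem~3.5) is genuinely different from your reduction: rather than trying to realign two nonforking extensions after separately pushing the witnessing and test sequences up to a model, they work with tree-indexed witnesses in the spirit of the Kaplan--Ramsey proof of Kim's lemma, using the existence axiom to build the tree directly over $A$ so that both sequences are compared within one combinatorial configuration, avoiding the uniqueness problem entirely. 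If you want to stay closer to your reduction strategy, the base change (Lemma~\ref{lemma:base.change}) would have to be carried out simultaneously for both sequences inside a single coherent construction, so that the two resulting sequences share the same first element over $M$ by design rather than by a nonexistent stationarity principle; as written, producing $b^{*}$ and $e_0$ independently loses precisely the information needed.
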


As we will notice in a moment, Kim's lemma for Kim-dividing (over models) will be the main reason behind the fact
that our local rank is finite in the context of NSOP$_1$ theories.

\section{Rank}\label{sec:rank}
\subsection{Definition and basic properties}
In this section, we are interested in defining a local rank depending on 
pairs consisting of an $\CL$-formula and a global type. We will prove several properties of this new rank.
Our idea for the rank was in some way motivated by Hans Adler's doctoral dissertation (\cite{HansThesis}). More precisely, in Section 2 of his dissertation, Adler defines so called \emph{dividing patterns} and then defines a local rank measuring the length of a maximal dividing pattern. In our case, we could not simply reuse his idea, since we are trying to ``domesticate" Kim-dividing, and instead of that we propose our own variation on \emph{Kim-dividing patterns}.

Let $Q:=\big((\varphi_0(x;y_0),q_0(y_0)),\ldots,(\varphi_{n-1}(x;y_{n-1}),q_{n-1}(y_{n-1}) )\big)$,
where $\varphi_0,\ldots,\varphi_{n-1}\in\CL$ and $q_0,\ldots,q_{n-1}$ are global types.

\begin{definition}\label{def:Qrank}
We define a local rank, called \emph{Q-rank}, 
$$D_Q(\,\cdot\,):\big\{\text{sets of formulae}\}\to\Ord\cup\{\infty\}.$$
For any set of $\mathcal{L}$-formulae $\pi(x)$
we have $D_Q(\pi(x))\geqslant\lambda$ if and only if there exists 
$\eta\in n^{\lambda}$ and 
$(b^{\alpha},M^{\alpha})_{\alpha<\lambda}$ such that
\begin{enumerate}
    \item $\dom(\pi(x))\subseteq M^0$,
    \item $q_0,\ldots,q_{n-1}$ are $M^0$-invariant,
    \item $M^{\alpha}\preceq\FC$ for each $\alpha<\lambda$
     and $(M^{\alpha})_{\alpha<\lambda}$ is continuous,
    \item $b^{\alpha}M^{\alpha}\subseteq M^{\alpha+1}$ for each $\alpha+1<\lambda$,
    \item $b^{\alpha}\models q_{\eta(\alpha)}|_{M^{\alpha}}$ for each $\alpha<\lambda$,    
    \item $\pi(x)\cup\{\varphi_{\eta(\alpha)}(x;b^{\alpha})\;|\;\alpha<\lambda\}$ is consistent,

    \item each $\varphi_{\eta(\alpha)}(x;b^{\alpha})$ Kim-divides over $M^{\alpha}$ with respect to definition (B), i.e.: for each $\alpha<\lambda$ there exists an $M^{\alpha}$-invariant global type $r_{\alpha}(y_{\eta(\alpha)})$ extending $\tp(b^{\alpha}/M^{\alpha})$ and $\bar{b}^{\alpha}=(b^{\alpha}_i)_{i<\omega}\models r_{\alpha}^{\otimes\omega}|_{M^{\alpha}}$ such that $b^{\alpha}_0=b^{\alpha}$ and $\{\varphi_{\eta(\alpha)}(x;b^{\alpha}_i)\;|\;i<\omega\}$ is inconsistent.
\end{enumerate}
If $D_Q(\pi)\geqslant\lambda$ for each $\lambda\in\Ord$,
then we set $D_Q(\pi)=\infty$. 
Otherwise $D_Q(\pi)$ is the maximal $\lambda\in\Ord$ such that $D_Q(\pi)\geqslant\lambda$.
\end{definition}

\begin{remark}\label{rem:cond.3}
If $T$ is NSOP$_1$, then Definition \ref{def:Qrank} is equivalent to the same definition but with the condition (3) replaced by
\begin{enumerate}
    \item[(3$^\ast$)] $M^{\alpha}\preceq\FC$ for each $\alpha<\lambda$, $(M^{\alpha})_{\alpha<\lambda}$ is continuous, and each $M^{\alpha+1}$ is $|M^{\alpha}|^+$-saturated,
\end{enumerate}
or even by
\begin{enumerate}
    \item[(3$^{\ast\ast}$)] $M^{\alpha}\preceq\FC$ for each $\alpha<\lambda$, $(M^{\alpha})_{\alpha<\lambda}$ is continuous, and each $M^{\alpha+1}$ is $|M^{\alpha}|^+$-saturated and strongly $|M^{\alpha}|^+$-homogeneous.
\end{enumerate}
The proof is a quite standard and long recursion, which uses transitivity and symmetry of $\ind^K$ over models.
Thus we omit it. 
If we refer to Definition \ref{def:Qrank} in the case of $T$ being NSOP$_1$, we usually have in mind its equivalent formulation with the condition (3$^\ast$).
\end{remark}

Let us explain a little bit the concept behind this rank. For simplicity we assume that $Q=( (\varphi,q) )$.
Then the witnesses from the definition of $D_Q(\pi)\geqslant\lambda$, $(M^{\alpha},b^{\alpha}_i)_{\alpha<\lambda,i<\omega}$, may be used to draw the following tree:
\begin{center}
\hspace{10mm}
\begin{tikzpicture}
\node (a) at (0,0) {$\pi(x)$};
\node (a) at (-0.75,1.75) {$\varphi(x;b^0_0)$};
\node (a) at (-1.5,3.5) {$\varphi(x;b^1_0)$};

\draw  (-0.05,0.25) -- (-0.7,1.5);
\draw  (-0.9,2) -- (-1.4,3.25);

\node (a) at (1.75,1.75) {$\varphi(x;b^0_1)$};
\node (a) at (3.5,1.75) {$\varphi(x;b^0_2)$};
\node (a) at (5.25,1.75) {$\ldots$};

\node (a) at (1,3.5) {$\varphi(x;b^1_1)$};
\node (a) at (2.75,3.5) {$\varphi(x;b^1_2)$};
\node (a) at (4.5,3.5) {$\ldots$};

\draw  (-0.05,0.25) -- (1.7,1.5);
\draw  (-0.05,0.25) -- (3.45,1.5);
\draw  (-0.05,0.25) -- (5.2,1.5);

\draw  (-0.9,2) -- (0.95,3.25);
\draw  (-0.9,2) -- (2.7,3.25);
\draw  (-0.9,2) -- (4.45,3.25);

\draw  (-1.5,3.75) .. controls (-2,4.25) and (-1.5,4.75) .. (-2.25,5.25);
\draw  (-1.5,3.75) .. controls (-1,4.25) and (-1,4.75) .. (-0.25,5.25);
\draw  (-1.5,3.75) .. controls (-0.75,4.25) and (-1,4.75) .. (1.25,5.25);

\draw[dashed] (-4,0.25) -- (7,0.25);
\node (a) at (-3.5,0) {$\dom(\pi)$};

\draw[dashed] (-4,1) -- (7,1);
\node (a) at (-3.5,0.75) {$M^0$};

\draw[dashed] (-4,2.75) -- (7,2.75);
\node (a) at (-3.5,2.5) {$M^1$};

\draw[dashed] (-4,4.5) -- (7,4.5);
\node (a) at (-3.5,4.25) {$M^2$};
\end{tikzpicture}
\end{center}
Each horizontal sequence of $b^{\alpha}_i$'s is a Morley sequence in some global $M^{\alpha}$-invariant type and so witnesses Kim-dividing of $\varphi(x;b^{\alpha}_0)$ over $M^{\alpha}$. 
This is nothing new.

The first new ingredient in our rank is that we require that also the leftmost branch in our tree forms a Morley sequence, this time in the previously fixed global type $q$ (which is $M^0$-invariant).
In other words, we focus only on Morley sequences - and that is in accordance with the intuition that all the essential data in a NSOP$_1$ theory is coded by Morley sequences.

The second new ingredient in our rank is that we allow ``jumps" in the extension of the base parameters between levels. More precisely, instead of the sequence $\dom(\pi)\subseteq M^0\preceq M^1\preceq\ldots$, we could consider a more standard sequence $\dom(\pi)\subseteq \dom(\pi)b^0_0\subseteq\dom(\pi)b^0_0b^1_0\subseteq\ldots$.
However, let us recall that $\ind^K$ does not necessarily satisfy the base monotonicity axiom, thus we allow in our rank some freedom in choosing the parameters over which each next level Kim-divides.

\begin{remark}
Because in NSOP$_1$ theories the both notions of Kim-dividing ( (A) and (B) from Definition \ref{def:Kim.div})
coincide over a model and in our rank we consider only Kim-dividing over models, our rank is suitable to work with both notions of Kim-dividing in the NSOP$_1$ environment.
\end{remark}

In the following lines, we will make use of our intuition and show several nice properties of the $Q$-rank,
we also refine the definition of $Q$-rank, so it will become more technical, but it will allow us to proof a few more facts. We start with something completely trivial.

\begin{fact}
\begin{enumerate}
\item 
$D_Q(\pi))\geqslant\lambda$, $f\in\aut(\FC)$ $\Rightarrow$ $D_{f(Q)}(f(\pi))\geqslant\lambda$.

\item 
If $\pi'\subseteq \pi$ then $D_Q(\pi)\leqslant D_{Q}(\pi')$.

\item
$D_Q(\pi)\leqslant\sum\limits_{j<n}D_{((\varphi_j,q_j))}(\pi)$.

\end{enumerate}
\end{fact}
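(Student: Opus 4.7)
Items (1) and (2) are direct verifications from Definition~\ref{def:Qrank}. For (1), given a witness $(b^\alpha, M^\alpha)_{\alpha<\lambda}$ and $\eta\in n^\lambda$ for $D_Q(\pi)\geqslant\lambda$, the transported data $(f(b^\alpha), f(M^\alpha))_{\alpha<\lambda}$ with the same $\eta$ witnesses $D_{f(Q)}(f(\pi))\geqslant\lambda$: an automorphism of $\FC$ preserves elementarity, sends an $A$-invariant global type to an $f(A)$-invariant global type, commutes with restrictions, and preserves $k$-inconsistency and hence Kim-dividing. For (2), the same data witnessing $D_Q(\pi)\geqslant\lambda$ witnesses $D_Q(\pi')\geqslant\lambda$: clauses (2)--(5) and (7) do not mention $\pi$, clause (1) is preserved because $\dom(\pi')\subseteq\dom(\pi)\subseteq M^0$, and clause (6) only weakens when $\pi$ is replaced by $\pi'$.

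The content lies in (3). The plan is to extract, from a length-$\lambda$ witness for $D_Q(\pi)\geqslant\lambda$, length-$\mu_j$ subwitnesses for $D_{((\varphi_j, q_j))}(\pi)$, where $\mu_j$ is the order type of $I_j := \eta^{-1}(j)\subseteq\lambda$. Let $\sigma_j\colon\mu_j\to I_j$ be the order-isomorphism, set $\tilde b_j^\beta := b^{\sigma_j(\beta)}$, and define a continuous elementary chain $(\tilde M_j^\beta)_{\beta<\mu_j}$ by $\tilde M_j^0 := M^0$, $\tilde M_j^{\beta+1} := M^{\sigma_j(\beta)+1}$, and taking unions at limits. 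A short induction using continuity of $(M^\alpha)$ yields $\tilde M_j^\beta\subseteq M^{\sigma_j(\beta)}$ throughout, from which clauses (1)--(6) for the subwitness follow by routine bookkeeping; clause (5), for instance, reduces to restricting $b^{\sigma_j(\beta)}\models q_j|_{M^{\sigma_j(\beta)}}$ down to $\tilde M_j^\beta$.

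I expect the main obstacle to be clause (7): one must show that $\varphi_j(x;b^{\sigma_j(\beta)})$ Kim-divides over the possibly smaller model $\tilde M_j^\beta$, knowing only that it Kim-divides over $M^{\sigma_j(\beta)}$. The plan is to replace the $M^{\sigma_j(\beta)}$-invariant witness type $r_{\sigma_j(\beta)}$ from the original clause (7) by a coheir of $\tp(b^{\sigma_j(\beta)}/\tilde M_j^\beta)$ over $\tilde M_j^\beta$ (which is automatically $\tilde M_j^\beta$-invariant), and to convert the original inconsistent $r_{\sigma_j(\beta)}$-Morley sequence into an inconsistent Morley sequence in this coheir via a stretching plus Erd\H{o}s--Rado extraction, in the spirit of Step~3 of the proof of Lemma~\ref{lemma:base.change}. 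Once each $D_{((\varphi_j,q_j))}(\pi)\geqslant\mu_j$ is in hand, the inequality $\lambda\leqslant\sum_{j<n}\mu_j$ follows from pigeonhole for ordinals, understood in the natural/Hessenberg sense (which agrees with the ordinary sum in the finite-rank situations of main interest in this paper), giving $D_Q(\pi)\leqslant\sum_{j<n}D_{((\varphi_j,q_j))}(\pi)$.
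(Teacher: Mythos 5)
Items (1) and (2) of your proposal are fine and are exactly what the paper has in mind (the paper's own proof is just ``all three items follow by the definition'').

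For item (3), however, you have manufactured an obstacle that does not exist, and your proposed repair of it does not work. The problem is your choice of the subchain: you set $\tilde M_j^0 := M^0$ and $\tilde M_j^{\beta+1} := M^{\sigma_j(\beta)+1}$, which makes $\tilde M_j^\beta$ a \emph{proper} subset of $M^{\sigma_j(\beta)}$ in general. You then correctly observe that clause (7) would require $\varphi_j(x;b^{\sigma_j(\beta)})$ to Kim-divide over the smaller model $\tilde M_j^\beta$, but you only know it Kim-divides over the larger model $M^{\sigma_j(\beta)}$. Your proposed fix --- passing to a coheir over $\tilde M_j^\beta$ and ``converting'' the inconsistent $r_{\sigma_j(\beta)}$-Morley sequence by stretching and Erd\H{o}s--Rado --- goes in the wrong direction: Kim-dividing over a smaller base is a \emph{stronger} condition, not a weaker one, and Lemma~\ref{lemma:base.change} (Step~3) transfers Kim-dividing \emph{from} a smaller base \emph{to} a larger one, never the reverse. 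Morley sequences over $M^{\sigma_j(\beta)}$ need not give any inconsistency when viewed over a strictly smaller model. So as written this step would fail.

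The correct (and in fact immediate) choice is simply $\tilde M_j^\beta := M^{\sigma_j(\beta)}$ and $\tilde b_j^\beta := b^{\sigma_j(\beta)}$. Then clause (7) costs nothing: the very same $M^{\sigma_j(\beta)}$-invariant type $r_{\sigma_j(\beta)}$ and the very same inconsistent Morley sequence from the original witness serve verbatim, since the base has not changed. Clauses (1)--(2) hold because $M^0 \subseteq M^{\sigma_j(0)}$; clause (4) holds because $b^{\sigma_j(\beta)} M^{\sigma_j(\beta)} \subseteq M^{\sigma_j(\beta)+1} \subseteq M^{\sigma_j(\beta+1)}$ (as $\sigma_j$ is strictly increasing); clauses (5) and (6) are inherited by restriction and by passing to a subset. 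In the finite case (the only case that matters for the rest of the paper, and in particular for Corollary~\ref{cor:finite.single}) there are no limit stages, so continuity of the subchain is automatic and the decomposition $\lambda = \sum_{j<n}|\eta^{-1}(j)|$ yields the stated inequality directly. Your remark about the Hessenberg sum is beside the point in this setting; and at genuine limit stages even your bound $\tilde M_j^\beta \subseteq M^{\sigma_j(\beta)}$ would reintroduce exactly the clause-(7) problem, which is a further reason to avoid going down that road.
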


\begin{proof}
All the three items follow by the definition.
\end{proof}

\begin{corollary}\label{cor:finite.single}
There exists finite $Q$ such that $D_Q(x=x)\geqslant\omega$ if and only if there exists $Q$ such that
$|Q|=1$ and $D_Q(x=x)\geqslant\omega$.
\end{corollary}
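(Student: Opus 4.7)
The backward direction is immediate: a singleton $Q$ is already finite, so nothing needs to be shown.

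For the forward direction, the plan is to exploit the third item of the preceding Fact, namely the subadditivity bound
\[
D_Q(\pi) \leqslant \sum_{j<n} D_{((\varphi_j,q_j))}(\pi).
\]
Suppose a finite $Q=((\varphi_0,q_0),\ldots,(\varphi_{n-1},q_{n-1}))$ satisfies $D_Q(x=x)\geqslant\omega$, and suppose for contradiction that $D_{((\varphi,q))}(x=x)<\omega$ for every single pair $(\varphi,q)$. Then in particular $D_{((\varphi_j,q_j))}(x=x)$ is a natural number for each $j<n$, so the right-hand side of the displayed inequality is a finite sum of natural numbers, hence itself a natural number. This contradicts $D_Q(x=x)\geqslant\omega$, so some $(\varphi_j,q_j)$ must witness $D_{((\varphi_j,q_j))}(x=x)\geqslant\omega$.

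I do not expect any real obstacle here; the only point worth being careful about is the ordinal arithmetic, i.e.\ that a finite sum of finite ordinals is again finite, which is standard. The statement is essentially a direct packaging of the subadditivity Fact into the form that will be convenient for later use, and the proof is a one-line appeal to it.
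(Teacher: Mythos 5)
Your proof is correct and matches the intended argument: the corollary is stated in the paper immediately after the Fact and carries no proof of its own, precisely because it is the direct packaging of item (3) of that Fact (the subadditivity bound $D_Q(\pi)\leqslant\sum_{j<n}D_{((\varphi_j,q_j))}(\pi)$) together with the trivial observation that a finite sum of natural numbers is finite.
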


As we will see in a moment, the $Q$-rank is finite in the case of NSOP$_1$ theories,
which is a desired property of our rank.
It also happens that outside of NSOP$_1$ the rank may be finite, e.g. in the case of $T$ being DLO.
In the following example we work with Definition \ref{def:Qrank}, however a more general result on finiteness of our rank in the context of NIP theories is provided in Section \ref{sec:beyond}, where we work with a slightly modified definition of Q-rank (see Definition \ref{def:Qrank2}).

\begin{example}\label{ex:Qrank.finite}
Let $T$ be the theory of dense linear orders without endpoints, DLO.
First, we will show that $D_Q(x=x)\leq 1$ for  $Q=(\phi(x,yz),q)$, where $q$ is an arbitrary invariant type and  $ \phi(x,yz)=(y<x<z)$. Suppose $M^i, b^i_j, i< 2$, $j<\omega$ with  $b^i_j=(a^i_j,c^i_j)$ witness that $D_Q(x=x)\geq 2$.
As $\phi(x;a^1_0,c^1_0)$ divides over  $a^0_0c^0_0$ (i.e. over  $b^0_0$),   $a^0_0,c^0_0$ cannot lie in  $(a^1_0,c^1_0)$, hence by the consistency condition we must have $a^0_0<a^1_0<c^1_0<c^0_0$ (*). But then an automorphism over  $M_0$ moving   $b^0_0$ to $b^0_1$ will move  $\phi(x;a^1_0,c^1_0)$ to a formula inconsistent with it. This contradicts $M_0$-invariance of the restriction $q'$ of $q$ to the first variable as  $a^1_0\models q'|M_1$. 

Note also that if $\phi(x,y)$ is of the form $x>y$ or $x<y$ or $x=x$, then $D_{(\phi,q)}(x=x)=0$, as in that case no instance of $\phi(x,y)$ Kim-divides over any set. Also, it is easy to see that if $\phi(x,y)=(x=y)$ then $D_{(\phi,q)}(x=x)=1$.
	
Now let $Q=(\phi(x,y),q(y))$ with $x=x_0\dots x_{n-1}$ be a variable of length $n$, $\phi(x,y)$ a quantifier-free formula and $q(y)$ a global invariant type. By quantifier elimination and completeness of $q(y)$ we have that $q(y)\wedge \phi(x,y)\equiv q(y)\wedge \bigvee_{i<k} \phi_i(x,y)$ where each $\phi_i(x,y)$ defines a product of intervals with ends in the set $y\cup\{+\infty, -\infty \}$. We claim that $D_Q(x=x)\leq nk$. Suppose for a contradiction that $D_Q(x=x)> nk$.  Then by pigeonhole principle one easily gets that $D_{Q'}(x=x)> n$ where $Q'=(\phi_i,q)$ for some $i<k$. 

Let  $(b^i,M^i)_{i<n+1}$ witness that $D_{Q'}(x=x)\geq n+1$. Choose  $c\models \{\phi(x,b^i):i<N\}$ and let $\psi(x)$ be a formula equivalent to $qftp(c/\emptyset)$.
Wlog $\phi(x,y)\vdash \psi(x)$. Hence $\phi(x,y)$ is of the form $\psi(x)\wedge \bigwedge_{j<n} \chi_j(x_j,y)$, where each $\chi_j(x_j,y)$ defines an interval with ends in $y\cup\{+\infty, -\infty\}$.
Again by pigeonhole principle we must have that $D_{Q''}(x=x)\geq 2$ with  $Q''=(\psi(x)\wedge \chi_j(x_j,y)))$ for some $j<n$. As $\psi(x)$ is over $\emptyset$ and is consistent, clearly we get that any family of instances of $\psi(x)\wedge \chi_j(x_j,y))$ is consistent if and only if the corresponding family of instances of $\chi_j(x_j,y))$ is consistent. Hence $D_{Q''}(x=x)=D_{(\chi_j(x_j,y),q)}(x_j=x_j)$. But $D_{(\chi_j(x_j,y),q)}(x_j=x_j)\leq 1$ by the first two paragraphs, a contradiction.

\end{example}

\begin{definition}
Let $\varphi(x;y)\in\CL$ and let $q(y)\in S(\FC)$.
We define $C_{\varphi,q}\leqslant\omega$ as
\begin{IEEEeqnarray*}{rCl}
C_{\varphi,q} &:=& \max\{k\;|\;(\exists M\preceq\FC,\,M\text{ is $M$-invariant}) \\
& & (\exists (a_i)_{i<\omega}\models q^{\otimes\omega}|_M)(\{\varphi(x;a_i)\;|\;i<k\}\text{ is consistent})\}.
\end{IEEEeqnarray*}
\end{definition}

\begin{remark}
Let $M,N\preceq\FC$. Assume that
\begin{itemize}
\item $q$ is $M$-invariant, $(a_i)_{i<\omega}\models q^{\otimes\omega}|_M$ and
$k_M$ is the maximal number (or $\omega$) such that $\{\varphi(x;a_i)\;|\;i<k_M\}$ is consistent,

\item $q$ is $N$-invariant, $(b_i)_{i<\omega}\models q^{\otimes\omega}|_N$ and
$k_N$ is the maximal number (or $\omega$) such that $\{\varphi(x;b_i)\;|\;i<k_M\}$ is consistent.
\end{itemize}
Then $k_N=k_M$. To see this we introduce auxiliary $\bar{N}\preceq\FC$ which contains $M$ and $N$, and a Morley sequence $(c_i)_{i<\omega}\models q^{\otimes\omega}|_{\bar{N}}$.
We have that $(a_i)_{i<\omega}\equiv_M (c_i)_{i<\omega}\equiv_N (b_i)_{i<\omega}$.
This remark says that $C_{\varphi,q}$ is in some sense a uniform bound and it can not happen that
for each $M\preceq\FC$ and each corresponding Morley sequence $(a_i)_{i<\omega}$,
 $\max\{k\;|\;\{\varphi(x;a_i)\;|\;i<k\}\}$ is finite, but $C_{\varphi,q}=\omega$.
\end{remark}
\noindent
From now on (if not stated otherwise), \textbf{we assume that $T$ has NSOP$_1$.}

\begin{lemma}\label{lemma:Qrank.NSOP1}
Then $D_Q(\pi)\geqslant\lambda$ if and only if there exists
$\eta\in n^\lambda$ and $\big((b^{\alpha}_i)_{i<\omega},M^{\alpha}\big)_{\alpha<\lambda}$
such that
\begin{enumerate}
    \item $\dom(\pi(x))\subseteq M^0$,
    \item $q_0,\ldots,q_{n-1}$ are $M^0$-invariant,
    \item $M^{\alpha}\preceq\FC$ for each $\alpha<\lambda$, $(M^{\alpha})_{\alpha<\lambda}$ is continuous, and each $M^{\alpha+1}$ is $|M^{\alpha}|^+$-saturated,
    \item $(b^{\alpha}_i)_{i<\omega} M^{\alpha}\subseteq M^{\alpha+1}$ for each $\alpha+1<\lambda$,   
    \item $\pi(x)\cup\{\varphi_{\eta(\alpha)}(x;b^{\alpha}_0)\;|\;\alpha<\lambda\}$ is consistent,
 
 	\item for each $\alpha<\lambda$, we have $(b^{\alpha}_i)_{i<\omega}\models q^{\otimes\omega}_{\eta(\alpha)}|_{M^{\alpha}}$ and $\{\varphi_{\eta(\alpha)}(x;b^{\alpha}_i)\;|\;i<\omega\}$ is inconsistent.      
\end{enumerate}
\end{lemma}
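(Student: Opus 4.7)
The plan is to combine the equivalent formulation of Definition \ref{def:Qrank} permitting saturated models, noted in Remark \ref{rem:cond.3}, with Kim's lemma for Kim-dividing (Theorem \ref{Kim.lemma.model}), which lets us freely swap one $M^{\alpha}$-invariant witness of Kim-dividing for any other over a model.

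For the easy direction ($\Leftarrow$), I would set $b^{\alpha} := b^{\alpha}_0$ and take the invariant type required in clause (7) of Definition \ref{def:Qrank} to be $r_{\alpha} := q_{\eta(\alpha)}$. This type is $M^{\alpha}$-invariant (since $M^0 \preceq M^{\alpha}$ and $q_{\eta(\alpha)}$ is $M^0$-invariant), clause (6) of the lemma guarantees that it extends $\tp(b^{\alpha}/M^{\alpha})$, and the given sequence $(b^{\alpha}_i)_{i<\omega}$ is already a Morley sequence in $r_{\alpha}$ over $M^{\alpha}$ witnessing the required inconsistency. The remaining conditions of Definition \ref{def:Qrank} are inherited, with $b^{\alpha} M^{\alpha} \subseteq (b^{\alpha}_i)_{i<\omega} M^{\alpha} \subseteq M^{\alpha+1}$ giving clause (4) of the definition.

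For the nontrivial direction ($\Rightarrow$), I would start from witnesses $(b^{\alpha}, M^{\alpha})_{\alpha<\lambda}$ for $D_Q(\pi) \geq \lambda$ and, via Remark \ref{rem:cond.3}, assume in addition that each $M^{\alpha+1}$ is $|M^{\alpha}|^+$-saturated. Fix $\alpha$. Since $q_{\eta(\alpha)}$ is $M^{\alpha}$-invariant (it is $M^0$-invariant and $M^0 \preceq M^{\alpha}$) and $b^{\alpha} \models q_{\eta(\alpha)}|_{M^{\alpha}}$, the partial type over $M^{\alpha} b^{\alpha} \subseteq M^{\alpha+1}$ in variables $(y_i)_{i \geq 1}$ asserting that $(b^{\alpha}, y_1, y_2, \ldots)$ is a Morley sequence in $q_{\eta(\alpha)}$ over $M^{\alpha}$ is consistent and uses at most $|M^{\alpha}|$-many parameters. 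By $|M^{\alpha}|^+$-saturation of $M^{\alpha+1}$ (realized one coordinate at a time), it is realized inside $M^{\alpha+1}$, giving $(b^{\alpha}_i)_{i<\omega} \subseteq M^{\alpha+1}$ with $b^{\alpha}_0 = b^{\alpha}$ and $(b^{\alpha}_i)_{i<\omega} \models q_{\eta(\alpha)}^{\otimes \omega}|_{M^{\alpha}}$. This establishes (4) and the Morley-sequence part of (6).

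Finally, I would argue that $\{\varphi_{\eta(\alpha)}(x;b^{\alpha}_i) \;|\; i < \omega\}$ is inconsistent. Clause (7) of Definition \ref{def:Qrank} supplies \emph{some} $M^{\alpha}$-invariant global type witnessing Kim-dividing of $\varphi_{\eta(\alpha)}(x;b^{\alpha})$ over $M^{\alpha}$; since $T$ is NSOP$_1$, Kim's lemma (Theorem \ref{Kim.lemma.model}) renders the choice of witness immaterial, so $q_{\eta(\alpha)}$ itself witnesses Kim-dividing, and hence the Morley sequence produced above gives the needed inconsistency. Conditions (1), (2), (3) of the lemma hold immediately after the preliminary application of Remark \ref{rem:cond.3}, and (5) is unchanged because $b^{\alpha}_0 = b^{\alpha}$. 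The essential technical point is that, once $|M^{\alpha}|^+$-saturation of $M^{\alpha+1}$ is in place, Kim's lemma allows us to homogenize all the Kim-dividing witnesses into Morley sequences living in the prescribed global types $q_0, \dots, q_{n-1}$ while simultaneously fitting them into the tower of models.
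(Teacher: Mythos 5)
Your proof is correct and matches the paper's argument in all essentials: both directions use the same ingredients. For $\Leftarrow$ you observe that taking $r_\alpha := q_{\eta(\alpha)}$ satisfies clause (7) of Definition~\ref{def:Qrank}, which is exactly the "straightforward" direction the paper notes holds without NSOP$_1$. For $\Rightarrow$ you invoke Remark~\ref{rem:cond.3} to upgrade to saturated models, use $|M^\alpha|^+$-saturation of $M^{\alpha+1}$ to realize a Morley sequence in $q_{\eta(\alpha)}$ over $M^\alpha$ starting at $b^\alpha$ inside $M^{\alpha+1}$, and invoke Kim's lemma to conclude that this sequence already witnesses inconsistency; the paper does the same two things in the opposite order (first invoke Kim's lemma abstractly to produce a dividing Morley sequence $\bar c^\alpha$ with $c^\alpha_0 = b^\alpha$, then use saturation to find a copy of it over $M^\alpha b^\alpha$ inside $M^{\alpha+1}$), which is an immaterial reordering.
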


\begin{proof}
The implication right-to-left is straightforward and holds even without the assumption about NSOP$_1$. Let us show the implication left-to-right.

By Kim's lemma (Fact \ref{fact:Kims.lemma}), we can replace the condition (7) from Definition \ref{def:Qrank}, by: for each $\alpha<\lambda$ there exists $\bar{c}^{\alpha}=(c^{\alpha}_i)_{i<\omega}\models q^{\otimes\omega}_{\eta(\alpha)}|_{M^{\alpha}}$ such that $c^{\alpha}_0=b^{\alpha}$ and $\{\varphi_{\eta(\alpha)}(x;c^{\alpha}_i)\;|\;i<\omega\}$ is inconsistent.
By saturation of $M^{\alpha+1}$, we find $(b^{\alpha}_i)_{i<\omega}\subseteq M^{\alpha+1}$ such that $(b^{\alpha}_i)_{i<\omega}\equiv_{M^{\alpha}b^{\alpha}}(c^{\alpha}_i)_{i<\omega}$ which is the desired sequence to finish the proof.
\end{proof}

\begin{lemma}\label{lemma:DQ.finite}
$D_Q(\{x=x\})\leqslant\sum\limits_{j\in J}C_{\varphi_j,q_j}<\omega$,
where $J=\{j< n\;|\;C_{\varphi_j,q_j}<\omega\}$.
\end{lemma}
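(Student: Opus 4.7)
My plan is to use the NSOP$_1$-adapted characterization of $D_Q$ given by Lemma \ref{lemma:Qrank.NSOP1}, combined with the uniformity of $C_{\varphi,q}$ expressed in the remark, to reduce the problem to counting how often each index $j < n$ can appear in a witnessing $\eta$.

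First, I would suppose $D_Q(\{x=x\}) \geq \lambda$ and invoke Lemma \ref{lemma:Qrank.NSOP1} to obtain $\eta \in n^\lambda$ together with models $(M^\alpha)_{\alpha<\lambda}$ and sequences $(b^\alpha_i)_{i<\omega}$ for each $\alpha<\lambda$ satisfying the listed properties; in particular, each $(b^\alpha_i)_{i<\omega}$ is a Morley sequence in $q_{\eta(\alpha)}$ over $M^\alpha$, the family $\{\varphi_{\eta(\alpha)}(x; b^\alpha_i) : i<\omega\}$ is inconsistent, and $\pi(x) \cup \{\varphi_{\eta(\alpha)}(x; b^\alpha_0) : \alpha<\lambda\}$ is consistent. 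The first step is to observe that $\eta(\alpha) \in J$ for every $\alpha < \lambda$: indeed, if $C_{\varphi_{\eta(\alpha)}, q_{\eta(\alpha)}} = \omega$, then by the remark preceding this lemma (uniformity of $C_{\varphi,q}$ across choices of base model and Morley sequence), the set $\{\varphi_{\eta(\alpha)}(x; b^\alpha_i) : i < \omega\}$ would have to be consistent, contradicting item (6) of Lemma \ref{lemma:Qrank.NSOP1}.

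Next, I would show that for each fixed $j \in J$, the subsequence $(b^\alpha_0)_{\alpha \in \eta^{-1}(j)}$, indexed in the natural order of $\lambda$, is a Morley sequence in $q_j$ over $M^0$. This is where a small amount of care is required, but it is forced by the setup: if $\alpha' < \alpha$ and $\eta(\alpha') = \eta(\alpha) = j$, then $b^{\alpha'}_0 \in M^{\alpha'+1} \subseteq M^\alpha$ by the continuity-with-jumps condition (4), and $b^\alpha_0 \models q_j|_{M^\alpha}$; restricting this type we get $b^\alpha_0 \models q_j|_{M^0 (b^{\alpha'}_0)_{\alpha' < \alpha,\, \eta(\alpha')=j}}$, which is exactly the Morley condition (using $M^0$-invariance of $q_j$). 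Since the family of instances $\{\varphi_j(x; b^\alpha_0) : \eta(\alpha) = j\}$ is consistent (being a sub-family of the consistent set from item (5) of Lemma \ref{lemma:Qrank.NSOP1}), the definition of $C_{\varphi_j, q_j}$ forces $|\eta^{-1}(j)| \leq C_{\varphi_j, q_j}$.

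Finally, summing over $j$, we get $\lambda = \sum_{j \in J} |\eta^{-1}(j)| \leq \sum_{j \in J} C_{\varphi_j, q_j}$, which is a finite sum of finite numbers, hence finite. This holds for any $\lambda$ with $D_Q(\{x=x\}) \geq \lambda$, so the desired inequality follows. The only mildly subtle point is verifying that the selected subsequence is genuinely Morley over $M^0$, which hinges on the fact that condition (4) of Lemma \ref{lemma:Qrank.NSOP1} absorbs the entire Morley sequences $(b^\alpha_i)_{i<\omega}$ (not only the zeroth elements) into the next model, and that $q_j$'s invariance is based at $M^0$; once these are unpacked, the counting argument is immediate.
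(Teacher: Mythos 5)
Your proposal is correct and follows essentially the same route as the paper: extract the subsequence $(b^\alpha_0)_{\alpha\in\eta^{-1}(j)}$, note via condition (4) of Lemma \ref{lemma:Qrank.NSOP1} that it is a Morley sequence in $q_j$ over $M^0$, use the consistency of the branch and the inconsistency along each horizontal row to bound $|\eta^{-1}(j)|$ by $C_{\varphi_j,q_j}$, and sum over $j$. The paper phrases the general case as ``repeat the first part of this proof for $q_j$'' where you spell out both the Morley claim and the observation that $\eta$ only takes values in $J$, but the content is the same.
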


\begin{proof}
Let us deal first with the case when $Q=((\varphi(x;y),q(y)))$.
Assume that $D_Q(\pi)\geqslant\lambda>0$ and let 
$(b^{\alpha},M^{\alpha})_{\alpha<\lambda}$ be as in Definition \ref{def:Qrank}
($\eta$ is constant, so we skip it).

We have that $b^{\alpha}\models q|_{M^{\alpha}}$ and since $b^{\alpha}M^{\alpha}\subseteq M^{\alpha+1}$, it is $b^{\alpha}\models q|_{M^0b^{<\alpha}}$. We know also that $q$ is $M^0$-invariant and that $\{\varphi(x;b^{\alpha})\;|\;\alpha<\lambda\}$ is consistent.
On the other hand, by Lemma \ref{lemma:Qrank.NSOP1},
there exists a Morley sequence $(b^0_i)_{i<\omega}\models q^{\otimes\omega}|_{M^0}$ such that
$b^0_0=b^0$ and $\{\varphi(x;b^0_i)\;|\;i<\omega\}$ is inconsistent.
Therefore $C_{\varphi,q}<\omega$ and so $\lambda\leqslant C_{\varphi,q}<\omega$.

We switch to the general case where
$$Q=\big((\varphi_0(x;y_0),q_0(y_0)),\ldots,(\varphi_{n-1}(x;y_{n-1}),q_{n-1}(y_{n-1}) \big).$$
Now, the function $\eta\in n^{\lambda}$ becomes important.
We have that $\lambda=\bigcup_{j< n}\eta^{-1}[j]$, so we are done if we can show that $|\eta^{-1}[j]|=0$ or $|\eta^{-1}[j]|\leqslant C_{\varphi_j,k_j}<\omega$ for each $j< n$.
Fix some $j< n$ and assume that $|\eta^{-1}[j]|>0$.
Let $\alpha<\lambda$ be the first index such that $\eta(\alpha)=j$.
Repeating the first part of this proof for $q_j$ we obtain what we need.
\end{proof}

\begin{corollary}\label{cor:DQ.finite}
$D_Q(\pi)\leqslant\sum\limits_{j\in J}C_{\varphi_j,q_j}<\omega$,
where $J=\{j< n\;|\;C_{\varphi_j,q_j}<\omega\}$.
\end{corollary}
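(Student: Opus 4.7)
The plan is to derive this immediately from Lemma \ref{lemma:DQ.finite} together with the monotonicity property of $D_Q$ stated in item (2) of the Fact preceding Corollary \ref{cor:finite.single}, namely that $\pi' \subseteq \pi$ implies $D_Q(\pi) \leqslant D_Q(\pi')$.

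First I would observe that for any partial type $\pi(x)$, we trivially have $\{x=x\} \subseteq \pi(x)$ (or we may simply note that the formula $x=x$ is an element of, or implied by, every partial type). Applying monotonicity with $\pi' = \{x=x\}$ and $\pi$ our given partial type therefore yields
\[
D_Q(\pi) \;\leqslant\; D_Q(\{x=x\}).
\]
Then by Lemma \ref{lemma:DQ.finite}, the right-hand side is bounded by $\sum_{j\in J} C_{\varphi_j,q_j}$, which is finite since each summand is finite and $J$ is finite, giving the conclusion.

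There is no real obstacle here: the content is entirely in Lemma \ref{lemma:DQ.finite}, and the corollary is a one-line reduction via monotonicity. The only subtlety worth mentioning, if one wants to be fully rigorous, is to check that monotonicity really does apply with $\pi' = \{x=x\}$: any witness $(b^\alpha, M^\alpha)_{\alpha < \lambda}$ to $D_Q(\pi) \geqslant \lambda$ automatically satisfies the clauses (1) and (6) of Definition \ref{def:Qrank} for $\{x=x\}$ in place of $\pi$, since $\dom(\{x=x\}) = \emptyset \subseteq M^0$ and consistency of $\pi(x) \cup \{\varphi_{\eta(\alpha)}(x;b^\alpha)\}_{\alpha<\lambda}$ entails consistency of $\{\varphi_{\eta(\alpha)}(x;b^\alpha)\}_{\alpha<\lambda}$ alone; all other clauses of Definition \ref{def:Qrank} do not involve $\pi$. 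Hence the same data witnesses $D_Q(\{x=x\}) \geqslant \lambda$, confirming the monotonicity step, and the corollary follows.
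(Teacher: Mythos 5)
Your proposal is correct and takes what is clearly the intended approach: the paper gives no explicit proof for this corollary (it is stated immediately after Lemma \ref{lemma:DQ.finite}), and the natural one-line argument is exactly the monotonicity $D_Q(\pi)\leqslant D_Q(\{x=x\})$ combined with that lemma. Your careful remark that the subset hypothesis $\{x=x\}\subseteq\pi$ of the monotonicity fact may not hold literally, but that the direct verification (a witness for $D_Q(\pi)\geqslant\lambda$ also witnesses $D_Q(\{x=x\})\geqslant\lambda$, since $\dom(\{x=x\})=\emptyset$ and consistency is preserved on passing to a subset) closes that gap, is exactly the right way to make the reduction rigorous.
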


\begin{lemma}\label{lemma:technical}
$D_Q(\pi)\geqslant\lambda$ if and only there exist $\eta\in n^{\lambda}$,
$(b^{\alpha}_i)_{\alpha<\lambda,i<\omega}$ and $M\preceq\FC$ such that
\begin{enumerate}
    \item $\dom(\pi)\subseteq M$,
    \item $q_0,\ldots,q_{n-1}$ are $M$-invariant,
    \item $\pi(x)\cup\{\varphi_{\eta(\alpha)}(x;b^{\alpha}_0)\;|\;\alpha<\lambda\}$ is consistent,
    \item $\{\varphi_{\eta(\alpha)}(x;b^{\alpha}_i)\;|\;i<\omega\}$ is inconsistent for each $\alpha<\lambda$,
    \item $$(b^{\lambda-1}_i)_{i<\omega}^\smallfrown\ldots^\smallfrown(b^0_i)_{i<\omega}\models q_{\eta(\lambda-1)}^{\otimes\omega}\otimes\ldots\otimes q_{\eta(0)}^{\otimes\omega}|_{M}.$$
\end{enumerate}
\end{lemma}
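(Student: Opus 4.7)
The plan is to prove the two implications by rather different methods: the forward direction is essentially bookkeeping of the tensor product, while the backward direction requires constructing the tower of models $M^\alpha$ by transfinite recursion, using automorphisms of $\FC$ fixing $M$ to reconcile the single-model data with the tower structure.

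For ($\Rightarrow$), I assume $D_Q(\pi)\geqslant \lambda$ and invoke Lemma~\ref{lemma:Qrank.NSOP1} to obtain $\eta \in n^\lambda$, a continuous tower of models $(M^\alpha)_{\alpha<\lambda}$, and Morley sequences $B^\alpha := (b^\alpha_i)_{i<\omega}\subseteq M^{\alpha+1}$ with $B^\alpha \models q_{\eta(\alpha)}^{\otimes\omega}|_{M^\alpha}$ satisfying the required consistency/inconsistency conditions. Setting $M := M^0$, properties (1)--(4) of the lemma are immediate. For (5) I observe that $M B^{<\alpha}\subseteq M^\alpha$ for each $\alpha$ (by continuity and condition (4) of Lemma~\ref{lemma:Qrank.NSOP1}), so $B^\alpha \models q_{\eta(\alpha)}^{\otimes\omega}|_{M B^{<\alpha}}$ by restriction; unravelling the definition of the tensor product on the right-hand side of (5) gives precisely this.

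For ($\Leftarrow$), given $M$ and data $(b^\alpha_i)_{\alpha<\lambda, i<\omega}$ satisfying (1)--(5), I construct by transfinite recursion on $\alpha<\lambda$ a continuous tower $M^0\preceq M^1 \preceq \ldots$ of small elementary substructures of $\FC$, elements $\tilde b^\alpha_0 \in M^{\alpha+1}$, and automorphisms $\sigma_\alpha \in \aut(\FC/M)$ with $\sigma_\alpha(b^\beta_0) = \tilde b^\beta_0$ for each $\beta < \alpha$. I start with $M^0 := M$ and $\sigma_0 := \id$; at a limit ordinal $\alpha$ I take $M^\alpha := \bigcup_{\beta<\alpha}M^\beta$ and let $\sigma_\alpha \in \aut(\FC/M)$ extend the coherent union of $\sigma_\beta|_{M(b^{<\beta}_0)}$ for $\beta<\alpha$ (by strong homogeneity of $\FC$). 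At the successor step, (5) gives $b^\alpha_0 \models q_{\eta(\alpha)}|_{M(b^{<\alpha}_0)}$ (by restriction from $q_{\eta(\alpha)}|_{M B^{<\alpha}}$); since $q_{\eta(\alpha)}$ is $M$-invariant and $\sigma_\alpha$ fixes $M$, this yields $\sigma_\alpha(b^\alpha_0) \models q_{\eta(\alpha)}|_{M(\tilde b^{<\alpha}_0)}$. I then pick $\tilde b^\alpha_0 \models q_{\eta(\alpha)}|_{M^\alpha}$; since $M(\tilde b^{<\alpha}_0) \subseteq M^\alpha$ and $q_{\eta(\alpha)}$ is complete, $\tilde b^\alpha_0 \equiv_{M(\tilde b^{<\alpha}_0)} \sigma_\alpha(b^\alpha_0)$, so some $\tau \in \aut(\FC/M(\tilde b^{<\alpha}_0))$ sends $\sigma_\alpha(b^\alpha_0) \mapsto \tilde b^\alpha_0$; I set $\sigma_{\alpha+1} := \tau \circ \sigma_\alpha$ and let $M^{\alpha+1}$ be any small elementary substructure containing $M^\alpha \cup \{\tilde b^\alpha_0\}$.

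To verify that $(\tilde b^\alpha_0, M^\alpha)_{\alpha<\lambda}$ witnesses $D_Q(\pi)\geqslant \lambda$ in the sense of Definition~\ref{def:Qrank}: condition (5) of that definition, namely $\tilde b^\alpha_0 \models q_{\eta(\alpha)}|_{M^\alpha}$, holds by construction. For condition (7), I extend $\tilde b^\alpha_0$ to a Morley sequence $(\tilde b^\alpha_i)_{i<\omega} \models q_{\eta(\alpha)}^{\otimes\omega}|_{M^\alpha}$; it realizes the same type over $M$ as $(b^\alpha_i)_{i<\omega}$, so the inconsistency of $\{\varphi_{\eta(\alpha)}(x;b^\alpha_i)\}_i$ transfers, witnessing that $\varphi_{\eta(\alpha)}(x;\tilde b^\alpha_0)$ Kim-divides over $M^\alpha$ via the $M^\alpha$-invariant global type $q_{\eta(\alpha)}$. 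Condition (6) holds because the limit automorphism $\sigma := \bigcup_\alpha \sigma_\alpha$ sends the original consistent set $\pi(x) \cup \{\varphi_{\eta(\alpha)}(x; b^\alpha_0)\}_\alpha$ to $\pi(x) \cup \{\varphi_{\eta(\alpha)}(x; \tilde b^\alpha_0)\}_\alpha$ while fixing $\pi$ pointwise (as $\dom(\pi) \subseteq M$). The main obstacle is choosing $\tilde b^\alpha_0$ realizing $q_{\eta(\alpha)}|_{M^\alpha}$ compatibly with the previously fixed $\tilde b^{<\alpha}_0$; this succeeds precisely because, by completeness of the global type $q_{\eta(\alpha)}$, any realization of $q_{\eta(\alpha)}|_{M^\alpha}$ automatically has the same type over $M(\tilde b^{<\alpha}_0)\subseteq M^\alpha$ as $\sigma_\alpha(b^\alpha_0)$.
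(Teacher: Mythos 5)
Your proof is correct. The forward direction is essentially identical to the paper's: invoke Lemma~\ref{lemma:Qrank.NSOP1}, set $M:=M^0$, and note that condition~(5) follows by restriction since $MB^{<\alpha}\subseteq M^\alpha$.

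The backward direction is correct but organized by a genuinely different mechanism than the paper's. The paper builds the tower $(M^\alpha)_{\alpha<\lambda}$ by, at each step, re-sampling the entire remaining Morley tail over the new model $M^{\alpha+1}$ and replacing it with a fresh tail $(c^{\lambda-1},\dots,c^{\alpha+1})$ of the same type over $M^\alpha(b^\alpha_i)_{i<\omega}$, so the whole $\otimes$-product is carried forward at every stage. You instead keep only the leads $b^\alpha_0$ and build a chain of automorphisms $\sigma_\alpha\in\aut(\FC/M)$ that progressively align them with fresh realizations $\tilde b^\alpha_0\models q_{\eta(\alpha)}|_{M^\alpha}$, deferring the choice of the Morley witnesses for Kim-dividing to the very end. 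Your key step --- that $\sigma_\alpha(b^\alpha_0)$ and any $\tilde b^\alpha_0\models q_{\eta(\alpha)}|_{M^\alpha}$ automatically agree over $M(\tilde b^{<\alpha}_0)$ because they both realize the complete type $q_{\eta(\alpha)}|_{M(\tilde b^{<\alpha}_0)}$ --- is what makes this shorter and avoids tracking the tail at all. It also makes clear that no saturation is needed for the backward implication (you verify Definition~\ref{def:Qrank} directly rather than the equivalent conditions of Lemma~\ref{lemma:Qrank.NSOP1}). One minor wording issue: the ``limit automorphism'' $\sigma:=\bigcup_\alpha\sigma_\alpha$ is not literally well-defined, since the $\sigma_\alpha$ are full automorphisms and need not increase as functions on $\FC$; only the restrictions $\sigma_\alpha|_{M(b^{<\alpha}_0)}$ cohere. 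Their union is a partial elementary map on $M(b^{<\lambda}_0)$ fixing $M$, which is exactly what is needed to transfer the consistency of $\pi(x)\cup\{\varphi_{\eta(\alpha)}(x;b^\alpha_0)\}_\alpha$ to $\pi(x)\cup\{\varphi_{\eta(\alpha)}(x;\tilde b^\alpha_0)\}_\alpha$; you should state it that way.
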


\begin{proof}
The left-to-right implication is straightforward: 
by Lemma \ref{lemma:Qrank.NSOP1} there are proper
$\eta\in n^\lambda$ and $\big((b^{\alpha}_i)_{i<\omega},M^{\alpha}\big)_{\alpha<\lambda}$, 
we set $M:=M^0$ and reuse $(b^{\alpha}_i)_{\alpha<\lambda,i<\omega}$.

Let us take care of the right-to-left implication.
We will recursively define a sequence 
$((e^{\alpha}_i)_{i<\omega},M^{\alpha})_{\alpha<\lambda}$
satisfying all the six conditions from Lemma \ref{lemma:Qrank.NSOP1}.
Because we do not want to get lost in a notational madness, 
instead of introducing new subscripts,
we will sketch a few first steps.

We set $M^0:=M$ and take $M^1\preceq\FC$ which is a $|M^0|^+$-saturated, and which contains $M^0$ and $(b^0_i)_{i<\omega}$.
Consider a Morley sequence
$$(c^{\lambda-1}_i)_{i<\omega}^\smallfrown\ldots^\smallfrown(c^1_i)_{i<\omega}\models
q_{\eta(\lambda-1)}^{\otimes\omega}\otimes\ldots\otimes q_{\eta(1)}^{\otimes\omega}|_{M^1}.$$
We have 
$$(b^{\lambda-1}_i)_{i<\omega}^\smallfrown\ldots^\smallfrown(b^1_i)_{i<\omega}\equiv_{M^0(b^0_i)_{i<\omega}}
(c^{\lambda-1}_i)_{i<\omega}^\smallfrown\ldots^\smallfrown(c^1_i)_{i<\omega}.$$
Now, let $M^2\preceq\FC$ be $|M^1|^+$-saturated such that $M^1(c^1_i)_{i<\omega}\subseteq M^2$
and let us choose one more Morley sequence
$$(d^{\lambda-1}_i)_{i<\omega}^\smallfrown\ldots^\smallfrown(d^2_i)_{i<\omega}\models
q_{\eta(\lambda-1)}^{\otimes\omega}\otimes\ldots\otimes q_{\eta(2)}^{\otimes\omega}|_{M^2}.$$
We see that
$$(d^{\lambda-1}_i)_{i<\omega}^\smallfrown\ldots^\smallfrown(d^2_i)_{i<\omega}\equiv_{M^1(c^1_i)_{i<\omega}}(c^{\lambda-1}_i)_{i<\omega}^\smallfrown\ldots^\smallfrown(c^2_i)_{i<\omega},$$
and so
\begin{IEEEeqnarray*}{rCl}
(d^{\lambda-1}_i)_{i<\omega}^\smallfrown\ldots^\smallfrown(d^2_i)_{i<\omega}^\smallfrown(c^1_i)_{i<\omega}^\smallfrown(b^0_i)_{i<\omega} &\equiv_{M^0}& (c^{\lambda-1}_i)_{i<\omega}^\smallfrown\ldots^\smallfrown(c^1_i)_{i<\omega}^\smallfrown(b^0_i)_{i<\omega} \\
&\equiv_{M^0}& (b^{\lambda-1}_i)_{i<\omega}^\smallfrown\ldots^\smallfrown(b^0_i)_{i<\omega}
\end{IEEEeqnarray*}
Set $(e^2_i)_{i<\omega}:=(d^2_i)_{i<\omega}$, $(e^1_i)_{i<\omega}:=(c^1_i)_{i<\omega}$ and $(e^0_i)_{i<\omega}:=(b^0_i)_{i<\omega}$.

Continuing this process we will obtain a sequence of models $M^0\preceq M^1\preceq\ldots\preceq M^{\lambda-1}$ and Morley sequences $(e^{\alpha}_i)_{i<\omega}\models q_{\eta(\alpha)}^{\otimes\omega}|_{M^{\alpha}}$, where $\alpha<\lambda$, such that
$M^{\alpha}(e^{\alpha}_i)_{i<\omega}\subseteq M^{\alpha+1}$ and
$$(e^{\lambda-1}_i)_{i<\omega}^\smallfrown\ldots^\smallfrown(e^0_i)_{i<\omega}\equiv_M
(b^{\lambda-1}_i)_{i<\omega}^\smallfrown\ldots^\smallfrown(b^0_i)_{i<\omega}.$$
Since $\dom(\pi(x))\subseteq M=M^0$, we have also that
$\pi(x)\,\cup\,\{\varphi_{\eta(\alpha)}(x;e^{\alpha}_0)\;|\;\alpha<\lambda\}$ is consistent
and that $\{\varphi_{\eta(\alpha)}(x;e^{\alpha}_i)\;|\;i<\omega\}$ is inconsitent for each $\alpha<\lambda$.
Therefore all the conditions of Lemma \ref{lemma:Qrank.NSOP1} are satisfied for $((e^{\alpha}_i)_{i<\omega},M^{\alpha})_{\alpha<\lambda}$.
\end{proof}

\begin{example}\label{ex:finite.not.1}
Let us provide an example of a situation when the rank of the home sort is finite, but strictly bigger than $1$. Let $k$ be any natural number greater than $1$ and let $p$ be equal to zero or to a prime number distinct from 2.  Consider the 2-sorted theory $T_m$ of $m$-dimensional vector spaces equipped with a non-degenerate symmetric bilinear form (see \cite[Chapter 10]{bilinear}). Let $x$ and $y$ be single vector variables, $\phi(x,y)=(x\perp y\wedge x\neq 0)$, and let $q(y)$ be the generic type in the vector sort $V$ (so $q$ is $\emptyset$-invariant). Put $Q=\{(\phi(x,y),q(y))\}$. We will show that $D_Q(\{x=x\})=k-1$. 

Let $(v_i)_{i<\omega}$ be a Morley sequence in $q(y)$. Then in particular $v_0\dots v_{k-1}$ are linearly independent, which easily implies that $\bigwedge_{i<k}\phi(x,v_i)$ is inconsistent. Thus $D_Q(\{x=x\})\leq k-1$.
For the other inequality, put $M_i=\acl(a_{ik},a_{ik+1},\dots,a_{ik+k-1})$ for $i<k$ and note that $b^i:=v_{(i+1)k}\models q|_{M_i}$ (as $b^i\ind M_i$ and $b^i\models q|_{\emptyset}$), and each $M_i$ is an elementary submodel by quantifier elimination. Also, the sequence $(v_{(i+1)k},v_{(i+1)k+1},\dots)$ is Morley over $M_i$, and it witnesses that $\phi(x,b^i)$ Kim-divides over $M_i$ for each $i<k$. Finally, $\bigwedge_{i<k-1} \phi(x,b^i)$ is consistent, as there is a non-zero vector orthogonal to $b^0,\dots,b^{i-2}$. This shows that $D_Q(\{x=x\})=k-1$. 
\end{example}

\begin{remark}
Note that if $T$ is an NSOP$_1$ theory (as we assume here) and for some $k<\omega$, a formula $\varphi(x,y)$,
$M\preceq\FC$, some $M$-invariant $q(y)\in S(\FC)$ and some $(b_i)_{i<\omega}\models q^{\otimes\omega}|_M$,
the set $\{\varphi(x,b_i)\;|\;i<\omega\}$ is $k$-inconsistent but not $(k-1)$-inconsistent, 
then $D_{((\varphi,q))}(x=x)=k-1$ (as in Example \ref{ex:finite.not.1}). To see this, consider a linear order $I$ being $(k-1)$-many copies of $\omega$ (one after another one) and $(c_i)_{i\in I}\models q^{\otimes I}|_M$ and use Lemma \ref{lemma:technical}. In other words, for $\pi(x):=\{x=x\}$ the situation is quite simple and either $D_{((\varphi,q))}(\pi)=0$ or
$D_{((\varphi,q))}(\pi)=C_{\varphi,q}$ provided $C_{\varphi,q}<\omega$.
\end{remark}

\begin{lemma}\label{lemma:implies}
If $\pi\vdash \pi'$ then 
$$D_Q(\pi)\leqslant D_{Q}(\pi').$$
\end{lemma}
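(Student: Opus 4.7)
My plan is to reduce to the single-base-model characterization in Lemma \ref{lemma:technical}. Assume $D_Q(\pi)\geq \lambda$ and let $M\preceq\FC$, $\eta\in n^\lambda$, and $(b^\alpha_i)_{\alpha<\lambda, i<\omega}$ be the witnesses provided by that lemma, so in particular $\dom(\pi)\subseteq M$, each $q_j$ is $M$-invariant, $(b^{\lambda-1}_i)_{i<\omega}{}^\smallfrown\dots{}^\smallfrown(b^0_i)_{i<\omega}\models q_{\eta(\lambda-1)}^{\otimes\omega}\otimes\dots\otimes q_{\eta(0)}^{\otimes\omega}|_M$, each horizontal row is inconsistent, and the diagonal is consistent with $\pi$. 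The only obstacle to using these witnesses directly for $\pi'$ is that $\dom(\pi')$ need not be contained in $M$.

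To fix this, I would pick any $M'\preceq\FC$ with $M\cup\dom(\pi')\subseteq M'$. Since an $M$-invariant global type is automatically $M'$-invariant whenever $M\subseteq M'$, conditions (1)--(2) of Lemma \ref{lemma:technical} hold at $M'$. Next I would take a realization $(c^{\lambda-1}_i)_{i<\omega}{}^\smallfrown\dots{}^\smallfrown(c^0_i)_{i<\omega}$ of $q_{\eta(\lambda-1)}^{\otimes\omega}\otimes\dots\otimes q_{\eta(0)}^{\otimes\omega}|_{M'}$. The crucial observation is that the global product type is $M$-invariant, so its restriction to $M$ is a \emph{complete} type over $M$; hence $(c^\alpha_i)_{\alpha,i}\equiv_M (b^\alpha_i)_{\alpha,i}$.

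Having this equivalence over $M$, the remaining conditions transfer for free: the parameters of each $\varphi_{\eta(\alpha)}(x;y)$ as well as of $\pi(x)$ sit inside $M$, so the inconsistency of each row $\{\varphi_{\eta(\alpha)}(x;c^\alpha_i)\mid i<\omega\}$ and the consistency of $\pi(x)\cup\{\varphi_{\eta(\alpha)}(x;c^\alpha_0)\mid \alpha<\lambda\}$ follow from the corresponding statements for the $b$'s. Finally, $\pi\vdash \pi'$ upgrades the last consistency statement to $\pi'(x)\cup\{\varphi_{\eta(\alpha)}(x;c^\alpha_0)\mid \alpha<\lambda\}$. So $M'$, $\eta$, and $(c^\alpha_i)_{\alpha,i}$ witness $D_Q(\pi')\geq\lambda$ via Lemma \ref{lemma:technical}.

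There is no really hard step here; the only mild subtlety is recognising that the enlargement from $M$ to $M'$ preserves every relevant piece of the witness precisely because all data (formulas, parameters of $\pi$, and the Morley product type up to $M$-equivalence) is already controlled over $M$. Everything else is bookkeeping against the definition.
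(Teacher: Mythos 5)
Your proposal is correct and follows essentially the same route as the paper's own proof: both reduce to the single-base-model characterization from Lemma~\ref{lemma:technical}, enlarge $M$ to a model $M'$ (the paper calls it $N$) containing $\dom(\pi')$, take a fresh Morley sequence in the tensor-product type over $M'$, note that it is $\equiv_M$ to the original sequence so that consistency and inconsistency transfer, and finally invoke $\pi\vdash\pi'$ to conclude.
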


\begin{proof}
Assume that $D_Q(\pi)\geqslant \lambda\in\mathbb{N}_{>0}$, then by Lemma \ref{lemma:technical}
there exist $\eta\in n^{\lambda}$, $(b^{\alpha}_i)_{\alpha<\lambda,i<\omega}$ and $M\preceq\FC$ such that
\begin{enumerate}
    \item $\dom(\pi)\subseteq M$,
    \item $q_1,\ldots,q_n$ are $M$-invariant,
    \item $\pi(x)\cup\{\varphi_{\eta(\alpha)}(x;b^{\alpha}_0)\;|\;\alpha<\lambda\}$ is consistent,
    \item $\{\varphi_{\eta(\alpha)}(x;b^{\alpha}_i)\;|\;i<\omega\}$ is inconsistent for each $\alpha<\lambda$,
    \item $$(b^{\lambda-1}_i)_{i<\omega}^\smallfrown\ldots^\smallfrown(b^0_i)_{i<\omega}\models q_{\eta(\lambda-1)}^{\otimes\omega}\otimes\ldots\otimes q_{\eta(0)}^{\otimes\omega}|_{M}.$$
\end{enumerate}
Let $N\preceq\FC$ contain $M$ and $\dom(\pi')$, and let 
$$(c^{\lambda-1}_i)_{i<\omega}^\smallfrown\ldots^\smallfrown(c^0_i)_{i<\omega}\models q_{\eta(\lambda-1)}^{\otimes\omega}\otimes\ldots\otimes q_{\eta(0)}^{\otimes\omega}|_{N}.$$
Then naturally $\dom(\pi')\subseteq N$ and $q_1,\ldots,q_n$ are $N$-invariant.
Because 
$$(b^{\lambda-1}_i)_{i<\omega}^\smallfrown\ldots^\smallfrown(b^0_i)_{i<\omega}\equiv_M
(c^{\lambda-1}_i)_{i<\omega}^\smallfrown\ldots^\smallfrown(c^0_i)_{i<\omega},$$
we have also that $\{\varphi_{\eta(\alpha)}(x;c^{\alpha}_i)\;|\;i<\omega\}$ is inconsistent for each $\alpha<\lambda$, and that $\pi(x)\cup\{\varphi_{\eta(\alpha)}(x;c^{\alpha}_0)\;|\;\alpha<\lambda\}$ is consistent.
Moreover, $\pi\vdash\pi'$ implies that
$\pi'(x)\cup\{\varphi_{\eta(\alpha)}(x;c^{\alpha}_0)\;|\;\alpha<\lambda\}$ is consistent.
Hence, Lemma \ref{lemma:technical} gives us $D_Q(\pi')\geqslant\lambda$.
\end{proof}

\begin{lemma}\label{lemma:Qrank.vee}
$D_Q(\pi\cup\{\bigvee\limits_{j\leqslant m}\psi_j\})=\max\limits_{j\leqslant m} D_Q(\pi\cup\{\psi_j\})$.
\end{lemma}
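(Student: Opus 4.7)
The plan is to prove the two inequalities separately. One direction is an immediate application of the monotonicity already proved in Lemma \ref{lemma:implies}; the other comes from the trivial observation that a realizer of a disjunction realizes one of its disjuncts, which lets the same witnesses certify the rank inequality for a single disjunct.

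For the inequality $\max_{j\leqslant m} D_Q(\pi\cup\{\psi_j\})\leqslant D_Q(\pi\cup\{\bigvee_{j\leqslant m}\psi_j\})$, I would simply note that $\pi(x)\cup\{\psi_j(x)\}\vdash \pi(x)\cup\{\bigvee_{j\leqslant m}\psi_j(x)\}$ for each $j\leqslant m$, so Lemma \ref{lemma:implies} gives $D_Q(\pi\cup\{\psi_j\})\leqslant D_Q(\pi\cup\{\bigvee_{j\leqslant m}\psi_j\})$, and we take the maximum over $j$.

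For the converse, fix an ordinal $\lambda$ with $D_Q(\pi\cup\{\bigvee_{j\leqslant m}\psi_j\})\geqslant\lambda$ and pick witnesses $\eta\in n^\lambda$ and $(b^\alpha,M^\alpha)_{\alpha<\lambda}$ as in Definition \ref{def:Qrank}. Among conditions (1)--(7) the only ones that mention $\pi$ are (1) and (6). For (1), since $\dom(\pi\cup\{\psi_j\})\subseteq\dom(\pi\cup\{\bigvee_{j\leqslant m}\psi_j\})\subseteq M^0$, the condition transfers verbatim to $\pi\cup\{\psi_j\}$ for each $j$. For (6), the set $\pi(x)\cup\{\bigvee_{j\leqslant m}\psi_j(x)\}\cup\{\varphi_{\eta(\alpha)}(x;b^\alpha)\;|\;\alpha<\lambda\}$ is consistent; taking any realization $c$, we have $c\models\psi_{j_0}$ for some $j_0\leqslant m$, so $\pi(x)\cup\{\psi_{j_0}(x)\}\cup\{\varphi_{\eta(\alpha)}(x;b^\alpha)\;|\;\alpha<\lambda\}$ is consistent too. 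Thus the same data $\eta$, $(b^\alpha,M^\alpha)_{\alpha<\lambda}$ witness $D_Q(\pi\cup\{\psi_{j_0}\})\geqslant\lambda$.

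To finish, let $\mu:=\max_{j\leqslant m}D_Q(\pi\cup\{\psi_j\})$, which exists as the maximum of finitely many ordinals. If we had $\mu<D_Q(\pi\cup\{\bigvee_{j\leqslant m}\psi_j\})$, then applying the previous paragraph with $\lambda:=\mu+1$ would yield some $j_0\leqslant m$ with $D_Q(\pi\cup\{\psi_{j_0}\})\geqslant\mu+1$, contradicting the definition of $\mu$; hence equality holds. The only point requiring any attention is the observation that the witnessing data is entirely independent of $\pi$ except through (1) and (6), so no reshuffling of models or Morley sequences is needed when passing from the disjunction to a single disjunct — which is really the whole content of the proof.
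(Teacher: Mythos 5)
Your proof is correct and follows the same route as the paper: one inequality from Lemma \ref{lemma:implies}, the other by noting that a realization of the disjunction satisfies some disjunct, so the same witnesses $(\eta, (b^\alpha, M^\alpha)_{\alpha<\lambda})$ certify $D_Q(\pi\cup\{\psi_{j_0}\})\geqslant\lambda$ since among conditions (1)--(7) only (1) and (6) involve $\pi$. The closing "to finish" paragraph is an unnecessary (though harmless) contradiction wrapper; once you have the implication $D_Q(\pi\cup\{\bigvee_j\psi_j\})\geqslant\lambda\Rightarrow\max_j D_Q(\pi\cup\{\psi_j\})\geqslant\lambda$ for every $\lambda$, the inequality follows directly, which is exactly how the paper phrases it.
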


\begin{proof}
Because $\pi\cup\{\psi_i\}\vdash\pi\cup\{\bigvee_{j}\psi_j\}$ for each $i\leqslant m$, 
Lemma \ref{lemma:implies} gives us that
$$\max\limits_{j\leqslant m} D_Q(\pi\cup\{\psi_j\})\leqslant D_Q(\pi\cup\{\bigvee\limits_{j\leqslant m}\psi_j\}).$$
Hence it is enough to show that
$$D_Q(\pi\cup\{\bigvee\limits_{j\leqslant m}\psi_j\})\geqslant\lambda\quad\Rightarrow\quad
\max\limits_{j\leqslant m} D_Q(\pi\cup\{\psi_j\})\geqslant\lambda.$$
Let $D_Q(\pi\cup\{\bigvee_{j}\psi_j\})\geqslant\lambda$, i.e. 
there exists $\eta\in n^\lambda$ and $(b^{\alpha},M^{\alpha})_{\alpha<\lambda}$ as in Definition \ref{def:Qrank}, in particular
\begin{itemize}
    \item $\dom(\pi\cup\{\bigvee_{j}\psi_j\})\subseteq M^0$,
    \item $\pi\cup\{\bigvee_j\psi_j\}\cup\{\varphi_{\eta(\alpha)}(x;b^{\alpha})\;|\;\alpha<\lambda\}$ is consistent.
\end{itemize}
Thus there is $i_0\leqslant m$ such that 
$\pi\cup\{\psi_{i_0}\}\cup\{\varphi_{\eta(\alpha)}(x;b^{\alpha})\;|\;\alpha<\lambda\}$ is consistent.
Because $\dom(\pi\cup\{\psi_{i_0}\})\subseteq\dom(\pi\cup\{\bigvee_j\psi_j\})\subseteq M^0$, we have that $D_Q(\pi\cup\{\psi_{i_0}\})\geqslant\lambda$ and so also 
$\max_{j} D_Q(\pi\cup\{\psi_j\})\geqslant\lambda$.
\end{proof}

Let us recall that we are working in a theory $T$ which is NSOP$_1$.

\begin{lemma}\label{lemma:finite.character}
Assume that $q_0(y_0)=\ldots=q_{n-1}(y_{n-1})=q(y)$ (in $Q$).
\begin{enumerate}
    \item Let $\{\pi_{\beta}\}$ lists all finite subsets of $\pi$. If for each $\beta$ we have that $D_Q(\pi_{\beta})\geqslant \lambda<\omega$, then $D_Q(\pi)\geqslant \lambda$.

    \item We can always find a finite $\pi_0\subseteq\pi$ such that $D_Q(\pi_0)=D_Q(\pi)$.
\end{enumerate}
\end{lemma}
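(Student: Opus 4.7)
The plan is to prove (1) first and then deduce (2) as a straightforward corollary. For (1), the crucial fact is that since $n,\lambda<\omega$, the set $n^{\lambda}$ is finite. For each finite $\pi_\beta\subseteq\pi$ pick, via Lemma \ref{lemma:technical}, some $\eta_\beta\in n^{\lambda}$, a model $M'_\beta\supseteq\dom(\pi_\beta)$ with $q$ being $M'_\beta$-invariant, and a sequence $(b'^{\alpha,\beta}_i)$ witnessing $D_Q(\pi_\beta)\geqslant\lambda$. By pigeonhole (if no single $\eta$ is ``cofinal'', taking the union of the finitely many bad finite subsets yields a contradiction), some $\eta\in n^{\lambda}$ has the property that the family $\mathcal{F}:=\{\pi_\beta:\eta_\beta=\eta\}$ is cofinal in the finite subsets of $\pi$ under inclusion.

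Next, fix $M\preceq\FC$ containing $\dom(\pi)$ together with the parameters of every $\varphi_j$, with $q$ being $M$-invariant, and fix
$$(b^{\lambda-1}_i)_{i<\omega}{}^{\smallfrown}\cdots{}^{\smallfrown}(b^0_i)_{i<\omega}\models q^{\otimes\omega}\otimes\cdots\otimes q^{\otimes\omega}|_M.$$
The key claim is that this single canonical sequence witnesses $D_Q(\pi_\beta)\geqslant\lambda$ via $\eta$ for every $\pi_\beta\in\mathcal{F}$. Indeed, after enlarging $M'_\beta$ to include the parameters of each $\varphi_j$ and replacing $(b'^{\alpha,\beta}_i)$ by a fresh Morley sequence over the enlargement (neither step destroys being a witness, by invariance of $q$), both this updated sequence and $(b^\alpha_i)_{\alpha<\lambda,i<\omega}$ realize the restriction of the \emph{global} type $q^{\otimes\lambda\omega}$ to the small set $A_\beta:=\dom(\pi_\beta)\cup\bigcup_j\dom(\varphi_j)$. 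Since that restriction is a single complete type over $A_\beta$, the two sequences have the same type over $A_\beta$, and conditions (3)--(4) of Lemma \ref{lemma:technical} depend only on this type. Hence they transfer. Compactness then gives consistency of $\pi\cup\{\varphi_{\eta(\alpha)}(x;b^\alpha_0):\alpha<\lambda\}$, because any finite subset of $\pi$ lies inside some $\pi_\beta\in\mathcal{F}$ for which consistency has just been established; one application of Lemma \ref{lemma:technical} then yields $D_Q(\pi)\geqslant\lambda$.

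For (2), Corollary \ref{cor:DQ.finite} yields $D_Q(\pi)=k$ for some $k<\omega$. Lemma \ref{lemma:implies} (applied to $\pi\vdash\pi_0$) gives $D_Q(\pi_0)\geqslant k$ for every $\pi_0\subseteq\pi$. On the other hand, since $D_Q(\pi)<k+1$, the contrapositive of part (1) with $\lambda=k+1$ produces a finite $\pi_0\subseteq\pi$ with $D_Q(\pi_0)<k+1$, hence $D_Q(\pi_0)=k=D_Q(\pi)$. The heart of the argument is (1); the main obstacle there is producing a \emph{single} Morley sequence that serves as a witness for arbitrarily large $\pi_\beta$ simultaneously. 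This is made possible precisely by the hypothesis $q_0=\cdots=q_{n-1}=q$ (one Morley sequence in $q$ can host the formulas $\varphi_{\eta(\alpha)}$ at every level $\alpha$), combined with the fact that the global type $q^{\otimes\lambda\omega}$ restricts uniquely to the small set $A_\beta$.
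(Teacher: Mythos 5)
Your proof is correct, and it takes a genuinely different route from the paper's. The paper handles the problem that different $\pi_\beta$ may come with different $\eta_\beta$ by forming the disjunction $\psi(x;y):=\bigvee_{i<r}\varphi_i(x;y)$ over the formulas occurring among the $\varphi_{\eta_\beta(\alpha)}$, taking a single Morley sequence $(c^\alpha_i)$ over a model $N$ containing all the $M^\beta$, showing $\pi(x)\cup\{\psi(x;c^\alpha_0)\mid\alpha<\lambda\}$ consistent by compactness, realizing it by some $d$, and only then reading off $\eta$ from which disjunct $d$ satisfies at each level $\alpha$. You instead front-load the choice of $\eta$: since $n^\lambda$ is finite, a pigeonhole argument (taking the union of the finitely many ``bad'' finite subsets) isolates one $\eta$ whose witness family is cofinal, and then a single canonical Morley sequence over a model of $\dom(\pi)$ serves as a uniform witness by the invariance/uniqueness of $q^{\otimes\lambda\omega}$ restricted to each $\dom(\pi_\beta)$. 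Both arguments hinge on $|n^\lambda|<\omega$, the paper implicitly (via $r\leqslant n$ in the disjunction), you explicitly; both apply compactness once to pass from the $\pi_\beta$ to $\pi$. Your version avoids the disjunct-splitting bookkeeping at the cost of the extra cofinality lemma, and is arguably cleaner in isolating where finiteness of $n^\lambda$ is actually used. (The enlargement of $M'_\beta$ to ``include the parameters of $\varphi_j$'' is vacuous here, as the $\varphi_j\in\mathcal{L}$ are parameter-free, but it does no harm.) Your deduction of (2) from (1) via Corollary \ref{cor:DQ.finite}, Lemma \ref{lemma:implies}, and the contrapositive of (1) at $\lambda=k+1$ is the standard one; the paper leaves (2) implicit.
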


\begin{proof}
Because for each $\beta$ we have $D_Q(\pi_{\beta})\geqslant \lambda$,
by Lemma \ref{lemma:technical}, for each $\beta$
there exists a function $\eta_{\beta}\in n^{\lambda}$,
a sequence of sequences $(b^{\beta,\alpha}_i)_{\alpha<\lambda,i<\omega}$ and
a model $M^{\beta}\preceq\FC$ such that
\begin{enumerate}
    \item $\dom(\pi_{\beta})\subseteq M^{\beta}$,
    \item $q$ is $M^{\beta}$-invariant,
    \item $\pi_{\beta}(x)\cup\{\varphi_{\eta_{\beta}(\alpha)}(x;b^{\beta,\alpha}_0)\;|\;\alpha<\lambda\}$ is consistent,
    \item $\{\varphi_{\eta_{\beta}(\alpha)}(x;b^{\beta,\alpha}_i)\;|\;i<\omega\}$ is inconsistent for each $\alpha<\lambda$,
    \item $$(b^{\beta,\lambda-1}_i)_{i<\omega}^\smallfrown\ldots^\smallfrown(b^{\beta,0}_i)_{i<\omega}\models q^{\otimes\omega}\otimes\ldots\otimes q^{\otimes\omega}|_{M^{\beta}}.$$
\end{enumerate}
Let $N\preceq\FC$ be such that $\bigcup\limits_{\beta}M^{\beta}\subseteq N$.
Moreover, let us pick up the following Morley sequences
$$(c^{\lambda-1}_i)_{i<\omega}^\smallfrown\ldots^\smallfrown(c^0_i)_{i<\omega}\models q^{\otimes\omega}\otimes\ldots\otimes q^{\otimes\omega}|_N,$$
and, without loss of generality, let $\{\varphi_{\eta_{\beta}(\alpha)}(x;y)\;|\;\beta,\alpha<\lambda\}=\{\varphi_0(x;y),\ldots,\varphi_{r-1}(x;y)\}$
for some $s\leqslant n$. We introduce $\psi(x;y):=\bigvee\limits_{i<r}\varphi_i(x;y)$, and note that
$$\pi(x)\,\cup\,\{\psi(x;c^{\alpha}_0)\;|\;\alpha<\lambda\}$$
is consistent (otherwise, by compactness, $\pi_{\beta}(x)\,\cup\,\{\psi(x;c^{\alpha}_0)\;|\;\alpha<\lambda\}$
is inconsistent for some $\beta$, which is impossible, since
$$(c^{\lambda-1}_i)_{i<\omega}^\smallfrown\ldots^\smallfrown(c^0_i)_{i<\omega}\equiv_{M^{\beta}}(b^{\beta,\lambda-1}_i)_{i<\omega}^\smallfrown\ldots^\smallfrown(b^{\beta,0}_i)_{i<\omega}$$
and $\pi_{\beta}(x)\cup\{\varphi_{\eta_{\beta}(\alpha)}(x;b^{\beta,\alpha}_0)\;|\;\alpha<\lambda\}$ is consistent).
Consider $d\models \pi(x)\,\cup\,\{\psi(x;c^{\alpha}_0)\;|\;\alpha<\lambda\}$, then for each $\alpha<\lambda$
there is $i_{\alpha}<r$ such that $\models\varphi_{i_{\alpha}}(d,c^{\alpha}_0)$. Let $\eta\in n^{\lambda}$ be given by $\eta:\alpha\mapsto i_{\alpha}$.

As we want to use Lemma \ref{lemma:technical} to show that $D_Q(\pi)\geqslant\lambda$, and we have already defined $\eta$, $N\preceq\FC$ and $(c^{\alpha}_i)_{\alpha<\lambda,i<\omega}$, we need to verify whether all the five conditions from Lemma \ref{lemma:technical} hold.
Obviously, $\dom(\pi)\subseteq N$ and $q$ is $N$-invariant, so we have the first and the second condition. The fifth condition is naturally satisfied by the choice of $(c^{\alpha}_i)_{\alpha<\lambda,i<\omega}$. The third condition says that 
$\pi(x)\,\cup\,\{\varphi_{\eta(\alpha}(x,c^{\alpha}_0)\;|\;\alpha<\lambda\}$ is consistent, which is witnessed by element $d$. For the fourth condition, we need to note that $\{\varphi_{\eta(\alpha)}(x,c^{\alpha}_i)\;|\;i<\omega\}$ is inconsistent for every $\alpha<\lambda$.
Because $\eta(\alpha)< r$, there exist $\beta$ and $\alpha'<\lambda$ such that $\eta(\alpha)=\eta_{\beta}(\alpha')$. We know that $\{\varphi_{\eta_{\beta}(\alpha')}(x,b^{\beta,\alpha'}_i)\;|\;i<\omega\}$ is inconsistent,
that $(b^{\beta,\alpha'}_i)_{i<\omega}\models q^{\otimes\omega}|_{M^{\beta}}$ and
that $(c^{\alpha}_i)_{i<\omega}\models q^{\otimes\omega}|_{M^{\beta}}$. 
Thus also $\{\varphi_{\eta(\alpha)}(x,c^{\alpha}_i)\;|\;i<\omega\}$ is inconsistent.
\end{proof}

\begin{corollary}\label{cor:extensions}
Let 
$q_0(y_0)=\ldots=q_{n-1}(y_{n-1})=q(y)$ (in $Q$)
and let $\pi(x)$ be a partial type over $A$.
Then there exists $p(x)\in S(A)$ extending $\pi(x)$ such that $D_Q(p)=D_Q(\pi)$.
\end{corollary}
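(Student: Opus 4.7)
My plan is to prove this by a standard Zorn's lemma argument combined with the finite character of the rank and the disjunction formula.

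First, set $d:=D_Q(\pi)$, which is a \emph{finite} natural number by Corollary \ref{cor:DQ.finite}. (If $\pi$ is inconsistent the statement is vacuous, so assume $\pi$ is consistent.) Consider the poset
\[
\mathcal{P} := \{\sigma(x) \supseteq \pi(x) : \sigma \text{ is a partial type over } A,\ D_Q(\sigma) \geqslant d\},
\]
ordered by inclusion. Note $\pi \in \mathcal{P}$, so $\mathcal{P}$ is nonempty. By Lemma \ref{lemma:implies}, every $\sigma \in \mathcal{P}$ actually satisfies $D_Q(\sigma) = d$, since $\sigma \vdash \pi$ forces $D_Q(\sigma) \leqslant D_Q(\pi) = d$.

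Next I verify that $\mathcal{P}$ satisfies Zorn's hypothesis. Given a chain $\{\sigma_i\}_{i \in I} \subseteq \mathcal{P}$, let $\sigma := \bigcup_i \sigma_i$. Since $\pi \subseteq \sigma$ and each $\sigma_i \subseteq \sigma$, we have $\sigma \vdash \pi$, so $D_Q(\sigma) \leqslant d$ by Lemma \ref{lemma:implies}. Conversely, any finite subset $\pi_\beta \subseteq \sigma$ lies in some $\sigma_i$ (by the chain property), hence $\sigma_i \vdash \pi_\beta$ gives $D_Q(\pi_\beta) \geqslant D_Q(\sigma_i) = d$. Because the hypothesis $q_0 = \ldots = q_{n-1} = q$ lets us invoke Lemma \ref{lemma:finite.character}(1), we conclude $D_Q(\sigma) \geqslant d$, and therefore $\sigma \in \mathcal{P}$. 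Thus Zorn's lemma yields a maximal element $p \in \mathcal{P}$.

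Finally, I show such a maximal $p$ must be complete, which will finish the proof. Suppose for contradiction there is an $\mathcal{L}(A)$-formula $\psi(x)$ with $\psi \notin p$ and $\neg\psi \notin p$. Both $p \cup \{\psi\}$ and $p \cup \{\neg\psi\}$ properly extend $p$ and are partial types over $A$; by maximality of $p$ in $\mathcal{P}$, we have $D_Q(p \cup \{\psi\}) < d$ and $D_Q(p \cup \{\neg\psi\}) < d$. However, $p$ and $p \cup \{\psi \vee \neg\psi\}$ are mutually implied (the added formula is a tautology), so Lemma \ref{lemma:implies} gives $D_Q(p) = D_Q(p \cup \{\psi \vee \neg\psi\})$. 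Applying Lemma \ref{lemma:Qrank.vee} to the right-hand side yields
\[
D_Q(p) = \max\bigl(D_Q(p \cup \{\psi\}),\, D_Q(p \cup \{\neg\psi\})\bigr) < d,
\]
contradicting $D_Q(p) = d$. Hence $p \in S(A)$ with $D_Q(p) = D_Q(\pi)$, as desired. The only real subtlety is the chain step, where the assumption that all $q_j$ are equal is essential for invoking the finite-character Lemma \ref{lemma:finite.character}(1); everything else is routine.
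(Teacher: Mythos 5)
Your proof is correct and reaches the same conclusion, but it takes a genuinely different route from the paper. Where you invoke Zorn's lemma on the poset of rank-preserving extensions of $\pi$ and then argue a maximal element must be complete, the paper avoids Zorn entirely: it directly defines
\[
\tilde{\pi}(x):=\pi(x)\cup\{\neg\psi(x)\in L(A)\;:\;D_Q(\pi(x)\cup\{\psi(x)\})<D_Q(\pi(x))\},
\]
uses Lemmas \ref{lemma:implies} and \ref{lemma:Qrank.vee} to verify $\tilde{\pi}$ is consistent, and then takes \emph{any} completion $p\supseteq\tilde{\pi}$; if $D_Q(p)<D_Q(\pi)$, Lemma \ref{lemma:finite.character} gives a finite $p_0\subseteq p$ with $D_Q(p_0)<D_Q(\pi)$, so $\psi:=\bigwedge p_0$ satisfies $\neg\psi\in\tilde{\pi}\subseteq p$ while $\psi\in p$, a contradiction. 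Both arguments use exactly the same three lemmas (monotonicity, the $\vee$-rank identity, and finite character), so they are comparable in ingredients; the paper's version is slightly more economical (no appeal to the axiom of choice) and has the advantage of producing a concrete partial type $\tilde{\pi}$ any of whose completions works, rather than an unspecified maximal element.

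One minor imprecision in your write-up: when you suppose $\psi\notin p$ and $\neg\psi\notin p$ and assert that both $p\cup\{\psi\}$ and $p\cup\{\neg\psi\}$ are partial types over $A$, you are implicitly claiming consistency, which need not hold — for instance, a maximal $p\in\mathcal{P}$ could a priori imply $\neg\psi$ without containing it, making $p\cup\{\psi\}$ inconsistent. The gap is harmless (an inconsistent set has rank $<0\leqslant d$, so the inequality $D_Q(p\cup\{\psi\})<d$ still holds, and the $\vee$-argument goes through), but it is cleaner to first note that a maximal element of $\mathcal{P}$ is automatically deductively closed: if $p\vdash\psi$, then $p$ and $p\cup\{\psi\}$ mutually imply each other, so $D_Q(p\cup\{\psi\})=d$ by Lemma \ref{lemma:implies}, forcing $\psi\in p$ by maximality. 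Once this is observed, both $p\cup\{\psi\}$ and $p\cup\{\neg\psi\}$ really are consistent, and your argument applies as written.
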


\begin{proof}
The proof is completely standard, but let us sketch it anyway.
Consider
$$\tilde{\pi}(x):=\pi(x)\,\cup\,\{\neg\psi(x)\in L(A)\;|\;D_Q(\pi(x)\,\cup\,\{\psi(x)\})<D_Q(\pi(x))\}.$$
By Lemma \ref{lemma:implies} and Lemma \ref{lemma:Qrank.vee}, the set $\tilde{\pi}$ is a partial type over $A$.
Let $p(x)\in S(A)$ be any extension of $\tilde{\pi}$.

If $D_Q(p)<D_Q(\pi)$ then by Lemma \ref{lemma:finite.character} there exists a finite subset $p_0(x)\subseteq p(x)$ such that $D_Q(p_0)<D_Q(\pi)$. Let $\psi(x):=\bigwedge p_0(x)\in L(A)$, we have that
$$D_Q(\pi(x)\cup\{\psi(x)\})\leqslant D_Q(\psi(x))\leqslant D_Q(p_0(x))<D_Q(\pi(x)).$$
Thus $\neg\psi(x)\in \tilde{\pi}(x)\subseteq p(x)$ and we got a contradiction with $\psi(x)\in p(x)$.
\end{proof}

Let $\Delta=\{\varphi_1(x;y),\ldots,\varphi_n(x;y)\}$ and $1<k<\omega$.
Recall that there is a local rank used in simple theories, denoted $D(\,\cdot\,,\Delta,k)$ (cf. Chapter 3 in \cite{casasimpl}). This local rank may be used to characterize simplicity as:
$T$ is simple if and only if $D(\{x=x\},\{\varphi\},k)<\omega$ for all $\varphi$ and $k$ (e.g. Proposition 3.13 in \cite{casasimpl}). 
As our rank is also local, we can treat our rank as
an analogon of the local rank $D(\,\cdot\,,\Delta,k)$. Let us compare now the both local ranks.

\begin{remark}\label{rem:compare}
For each $Q=((\varphi_0,q_0),\ldots,(\varphi_{n-1},q_{n-1}))$
and any partial type $\pi$,
there exists $K<\omega$ such that
for any $k\geqslant K$ we have
$$D_Q(\pi)\leqslant D(\pi,\{\varphi_0,\ldots,\varphi_{n-1}\},k).$$
\end{remark}

\begin{proof}
As $T$ is NSOP$_1$, there is some $\lambda<\omega$ such that $D_Q(\pi)=\lambda$.
Let $\eta$ and $(b^\alpha,M^\alpha)_{\alpha<\lambda}$ be as in Definition \ref{def:Qrank}.
For each $\alpha<\lambda$ there exists $k_{\alpha}<\omega$ such that $\varphi_{\eta(\alpha)}(x;b^{\alpha})$
$k_{\alpha}$-divides over $M^{\alpha}$. 
Set $K:=\max\{k_0,\ldots,k_{\lambda-1}\}$. Then, by definition, $D(\pi,\{\varphi_0,\ldots,\varphi_{n-1}\},k)\geqslant\lambda$, provided $k\geqslant K$.
\end{proof}

On the other hand, if $T$ is NSOP$_1$ but not simple, then there must be a formula $\varphi$ and some $k_0<\omega$
such that $D(\{x=x\},\{\varphi\},k_0)\geqslant\omega$.
Thus for every $K<\omega$ there exists $k\geqslant K$ (e.g. $k=K+k_0$) such that $D(\{x=x\},\{\varphi\},k)\geqslant\omega$ but,
$D_{((\varphi,q))}(\{x=x\})<\omega$ for any choice of $q\in S(\FC)$. Therefore sharp inequality in Remark \ref{rem:compare} happens outside of the class of simple theories. One could ask about equality under the assumption on simplicity.
The following counterexample, which is even stable, leaves no doubt.

\begin{example}
Let $T$ be the theory of an equivalence relation $E$ with infinitely many classes all of which are infinite. It is well-known that $T$ is $\omega$-stable of Morley rank $2$. Let $\varphi_0(x,y)=E(x,y)$ and $\varphi_1(x,y)=(x=y)$. Then it is easy to see that for any $k>1$ we have that $D(\{x=x\},\{\varphi_0(x,y),\varphi_1(x,y)\},k)=2$.
Now, fix two arbitrary invariant global types $q_0(x),q_1(x)$ and put $Q=\{(\varphi_0,q_0),(\varphi_1,q_1)\}$. We claim that $D_Q(\{x=x\})=1$. Obviously $D_Q(\{x=x\})\geqslant 1$, so suppose for a contradiction that $D_Q(\{x=x\})\geqslant 2$ witnessed by $M^0,M^1,b^0,b^1$ and $\eta:2 \to 2$. The case where $\eta(0)=1$ can be excluded immediately, so assume $\eta(0)=0$.
Observe that $\models E(b^0,b^1)$, as otherwise $\varphi_{\eta(0)}(x,b^0)\wedge \varphi_{\eta(1)}(x,b^1)$ would be inconsistent (note $\varphi_i(x,y)\vdash E(x,y)$ for $i=0,1$).
On the other hand, as $b^1\models q_{\eta(1)}|_{M^0b^0}$, we have in particular that $b^1\ind_{M^0} b^0$, so $E(b^0,M^0)\neq \emptyset$, which contradicts that $\varphi_0(x,b^0)$ divides over $M^0$.
\end{example}

\subsection{Rank vs Kim-independence}\label{sec:rank.ind}
We know that Kim-generics do not exist in the theory of infinite dimensional vector spaces with a bilinear form (see Proposition 8.15 in \cite{bilinear}), which is NSOP$_1$. 
Since in the case of simple theories, a notion of finite local rank, which is compatible with forking and somehow invariant under shifts by elements of a definable group (e.g. Fact 3.7 in \cite{Anand98}), would lead to the existence of forking generics (see Lemma 3.8 in \cite{Anand98}), we probably should not expect that a notion of finite local rank in the case of NSOP$_1$ theories will be compatible with Kim-forking and invariant under shifts by group elements (otherwise one could try to prove existence of Kim-generics, which does always hold). 
Anyway, it seems that our notion of rank does not behave well under shifts by elements of some definable group, so does not immediately exclude compatibility of the rank with Kim-forking. Here, we study this problem and relate our results to an important question from \cite{CKR}.

\begin{lemma}\label{lemma:DQ.to.K-ind}
Let $M\preceq N\preceq\FC$ and $a\in\FC$.
If $D_Q(\tp(a/M))=D_Q(\tp(a/N))$ for each $M$-invariant $Q$ such that $|Q|=1$,
then $a\ind^K_M N$.
\end{lemma}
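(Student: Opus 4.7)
The plan is to argue the contrapositive: assume $a \nind^K_M N$ and produce an $M$-invariant $Q$ with $|Q|=1$ such that $D_Q(\tp(a/M))>D_Q(\tp(a/N))$. By the definition of Kim-independence, $\tp(a/N)$ Kim-forks over $M$, so there are formulas $\psi_0(x,y_0),\dots,\psi_n(x,y_n)$ and parameters $b_0,\dots,b_n\in N$ with $\tp(a/N)\vdash\bigvee_{j\leqslant n}\psi_j(x,b_j)$ and each $\psi_j(x,b_j)$ Kim-dividing over $M$. Evaluating at $a$, at least one disjunct holds, say $\models\psi_j(a,b_j)$; call this formula $\psi(x,b)$. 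So $\psi(x,b)\in\tp(a/N)$ and $\psi(x,b)$ Kim-divides over $M$.

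Next, since we are over a model in an NSOP$_1$ theory, notion (A) and notion (B) of Kim-dividing agree (Fact~\ref{fact:Kims.lemma}), so there exists an $M$-invariant global type $q\supseteq\tp(b/M)$ witnessing (B)-Kim-dividing of $\psi(x,b)$ over $M$; in particular $b\models q|_M$ and some Morley sequence $(b_i)_{i<\omega}\models q^{\otimes\omega}|_M$ starting with $b_0=b$ makes $\{\psi(x,b_i):i<\omega\}$ inconsistent. Set $Q:=((\psi,q))$, which is $M$-invariant with $|Q|=1$.

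The inequality $D_Q(\tp(a/M))>D_Q(\tp(a/N))$ will be obtained in two steps. First, from $\tp(a/M)\cup\{\psi(x,b)\}\subseteq\tp(a/N)$ and Lemma~\ref{lemma:implies} we get
\[
D_Q(\tp(a/N))\leqslant D_Q(\tp(a/M)\cup\{\psi(x,b)\}).
\]
Second, I claim
\[
D_Q(\tp(a/M))\geqslant D_Q(\tp(a/M)\cup\{\psi(x,b)\})+1.
\]
Since $T$ is NSOP$_1$, Corollary~\ref{cor:DQ.finite} guarantees that the value $\lambda:=D_Q(\tp(a/M)\cup\{\psi(x,b)\})$ is finite; pick a witness $(b^\alpha,M^\alpha)_{\alpha<\lambda}$ as in Definition~\ref{def:Qrank} (with the trivial $\eta$). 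Because $b\in\dom(\tp(a/M)\cup\{\psi(x,b)\})\subseteq M^0$, and because $q$ is already $M$-invariant and $\psi(x,b)$ Kim-divides over $M$ via $q$, I can prepend a new bottom level: define $\tilde M^0:=M$, $\tilde b^0:=b$, and $\tilde M^{\alpha+1}:=M^\alpha$, $\tilde b^{\alpha+1}:=b^\alpha$ for $\alpha<\lambda$. All the clauses of Definition~\ref{def:Qrank} can then be verified for the prepended family of length $\lambda+1$, using $b\models q|_M$, the assumed consistency of $\tp(a/M)\cup\{\psi(x,b)\}\cup\{\psi(x,b^\alpha):\alpha<\lambda\}$, and the elementarity/containment conditions already present in the original witness.

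The one potentially delicate point is the continuity clause (3) of Definition~\ref{def:Qrank}: shifting indices by $1$ would misbehave at limit ordinals. However, this is exactly where finiteness of the rank in NSOP$_1$ saves us: $\lambda$ is finite, hence $\lambda+1$ contains no limit ordinals, and no continuity issue arises. Combining the two inequalities yields $D_Q(\tp(a/M))\geqslant D_Q(\tp(a/N))+1$, contradicting the hypothesis and completing the proof.
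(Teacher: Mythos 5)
Your proposal is correct and takes essentially the same route as the paper: argue the contrapositive, extract a single Kim-dividing formula $\psi(x,b)\in\tp(a/N)$, pick an $M$-invariant global $q\supseteq\tp(b/M)$ witnessing the Kim-dividing (using (A)$=$(B) over models in NSOP$_1$), set $Q:=((\psi,q))$, bound $D_Q(\tp(a/N))\leqslant D_Q(\tp(a/M)\cup\{\psi(x,b)\})=:\lambda<\omega$ via Lemma~\ref{lemma:implies} and Corollary~\ref{cor:DQ.finite}, and then prepend $(b,M)$ as a new bottom level to a witness of $D_Q(\tp(a/M)\cup\{\psi(x,b)\})\geqslant\lambda$ to get $D_Q(\tp(a/M))\geqslant\lambda+1$. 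The only cosmetic difference is that the paper re-indexes by setting $M^{-1}:=M$, $b^{-1}:=b$ rather than shifting all indices up by one, and it additionally remarks (via Lemma~\ref{lemma:technical}) that one may take $M^0$ to be $|M|^+$-saturated so the enhanced clause~(3$^\ast$) continues to hold after prepending; your version, which checks only the unadorned clause~(3) of Definition~\ref{def:Qrank}, is equally valid.
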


\begin{proof}
Assume that $a\nind^K_M N$, which means that $\tp(a/N)$ Kim-divides over $M$.
Let $\varphi(x,b)\in \tp(a/N)$ Kim-divide over $M$. There exists an $M$-invariant $q(y)\in S(\FC)$ extending $\tp(b/M)$ such that $\varphi(x,b)$ $q$-divides over $M$. We set $Q=\big((\varphi(x;y),q(y))\big)$.
By Lemma \ref{lemma:implies} and Corollary \ref{cor:DQ.finite}, it follows that
$$D_Q(\tp(a/N))\leqslant D_Q(\tp(a/M)\;\cup\;\{\varphi(x;b)\})=:\lambda<\omega.$$
Therefore there exists a sequence $(b^{\alpha},M^{\alpha})_{\alpha<\lambda}$ such that
\begin{enumerate}
\item $Mb\subseteq M^0$,
\item $q$ is $M^0$-invariant,
\item $M^{\alpha}\preceq\FC$ for each $\alpha<\lambda$, $(M^{\alpha})_{\alpha<\lambda}$ is continuous, and each $M^{\alpha+1}$ is $|M^{\alpha}|^+$-saturated,
\item $b^{\alpha}M^{\alpha}\subseteq M^{\alpha+1}$ for each $\alpha+1<\lambda$,
\item $b^{\alpha}\models q|_{M^{\alpha}}$ for each $\alpha<\lambda$,
\item $\tp(a/M)\;\cup\;\{\varphi(x;b)\}\;\cup\;\{\varphi(x;b^{\alpha})\;|\;\alpha<\lambda\}$ is consistent,
\item $\varphi(x;b^{\alpha})$ Kim-divides over $M^\alpha$ for each $\alpha<\lambda$.
\end{enumerate}
By Lemma \ref{lemma:technical}, we can modify $M^0$ and so assume that $M^0$ is $|M|^+$-saturated, which we do. We set $M^{-1}:=M$ and $b^{-1}:=b$. Checking that $(b^{\alpha},M^{\alpha})_{-1\leqslant \alpha<\lambda}$ witnesses that $D_Q(\tp(a/M))\geqslant\lambda+1$ is routine.
\end{proof}

\begin{lemma}\label{lemma:ind.to.DQ}
Let $T$ be NSOP$_1$ with existence.
Assume that $a\in\FC$, $M\preceq N\preceq\FC$,
 $N$ is $|M|^+$-saturated,
  $q(y)\in S(\FC)$ is $M$-invariant, and that $q^{\otimes I}|_M$ is stationary for any order $I$ of the form $\omega^\smallfrown\ldots^\smallfrown\omega$ (finitely many copies of $\omega$).
Let $Q=\big((\varphi(x;y,),q(y))\big)$.
If $a\ind_M N$ then $D_Q(a/M)=D_Q(a/N)$.
\end{lemma}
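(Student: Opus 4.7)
The inequality $D_Q(a/N) \leq D_Q(a/M)$ is immediate from Lemma~\ref{lemma:implies} since $\tp(a/N) \vdash \tp(a/M)$. For the reverse, I would assume $D_Q(a/M) \geq \lambda$ and apply Lemma~\ref{lemma:technical} to extract a sequence $\bar b = (b^\alpha_i)_{\alpha<\lambda, i<\omega}$ realizing $q^{\otimes I}|_M$, where $I$ consists of $\lambda$ copies of $\omega$, such that $\tp(a/M) \cup \{\varphi(x;b^\alpha_0) : \alpha < \lambda\}$ is consistent while $\{\varphi(x;b^\alpha_i) : i<\omega\}$ is inconsistent for each $\alpha$. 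After applying an $M$-automorphism I may assume $a$ itself satisfies $\varphi(x;b^\alpha_0)$ for all $\alpha$.

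The plan is to produce a new sequence $\bar b^*$ satisfying (a) $\bar b^* \equiv_{Ma} \bar b$ and (b) $\bar b^* \models q^{\otimes I}|_N$. Such a $\bar b^*$ immediately witnesses $D_Q(a/N) \geq \lambda$ via Lemma~\ref{lemma:technical}: condition (a) ensures that $a$ itself realizes $\tp(a/N) \cup \{\varphi(x;(b^*)^\alpha_0) : \alpha<\lambda\}$ and that the inconsistency of each column transfers from $\bar b$ to $\bar b^*$, while condition (b) supplies the required Morley sequence over $N$.

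To construct $\bar b^*$, I would first invoke the existence axiom applied over $Ma$ to obtain $\bar b^* \equiv_{Ma} \bar b$ with $\bar b^* \ind_{Ma} N$, and hence $\bar b^* \ind^K_{Ma} N$. Combining this with $a \ind_M N$ (which gives $a \ind^K_M N$) and invoking symmetry of Kim-independence together with transitivity of $\ind^K$ over arbitrary sets from \cite{CKR}, I would deduce $\bar b^* \ind^K_M N$: flipping sides via symmetry yields $N \ind^K_M a$ and $N \ind^K_{Ma} \bar b^*$, so transitivity along $M \subseteq Ma \subseteq Ma\bar b^*$ gives $N \ind^K_M a\bar b^*$, and monotonicity plus symmetry yield the claim. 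Now pick any $\bar c \models q^{\otimes I}|_N$; as a Morley sequence in an $M$-invariant type over $N$ it satisfies $\bar c \ind_M N$, so $\bar c \ind^K_M N$. Since $\bar b^* \equiv_M \bar b$ implies $\bar b^* \models q^{\otimes I}|_M$, both $\bar b^*$ and $\bar c$ realize $q^{\otimes I}|_M$ and are Kim-independent from $N$ over $M$, so the hypothesized $\ind^K$-stationarity of $q^{\otimes I}|_M$ yields $\bar b^* \equiv_N \bar c$, i.e., $\bar b^* \models q^{\otimes I}|_N$, completing the construction.

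The main technical obstacle is the passage from $\bar b^* \ind^K_{Ma} N$, which comes for free from existence, down to $\bar b^* \ind^K_M N$, which is needed to invoke stationarity. Unlike forking, Kim-independence does not enjoy base monotonicity in the downward direction, so one cannot simply shrink the base. The argument must therefore route through symmetry of $\ind^K$ (to flip $\bar b^*$ and $N$) and the transitivity theorem for $\ind^K$ over arbitrary sets in NSOP$_1$ theories with existence established in \cite{CKR}; together with the known implication $\ind \Rightarrow \ind^K$, these ingredients are exactly what is needed, and the rest of the argument is essentially bookkeeping using stationarity of the Morley power.
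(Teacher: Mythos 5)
Your structural plan — fix $N$, move the Morley array $\bar b$ to a copy $\bar b^*$ that is independent from $N$, then invoke stationarity of the Morley power — is a legitimate dual to the paper's strategy (the paper instead fixes $\bar b$ and uses the existence axiom to move $N$ to a conjugate $N'$). The easy direction via Lemma~\ref{lemma:implies} and the use of the existence axiom over $Ma$ to produce $\bar b^*\equiv_{Ma}\bar b$ with $\bar b^*\ind_{Ma}N$ are fine. However, there is a genuine gap in the stationarity step, and it stems from a misreading of the hypothesis.

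The lemma assumes that $q^{\otimes I}|_M$ is \emph{stationary} in the ordinary sense (unique $\ind$-non-forking extension over any $N\supseteq M$), not Kim-stationary. You write ``the hypothesized $\ind^K$-stationarity of $q^{\otimes I}|_M$'', which is a strictly stronger assumption: since $\ind\Rightarrow\ind^K$, a type has more Kim-non-forking extensions than $\ind$-non-forking ones, so Kim-stationarity implies stationarity but not conversely. Your route through symmetry and transitivity of $\ind^K$ (\cite{CKR}) delivers only $\bar b^*\ind^K_M N$ and $\bar c\ind^K_M N$, which suffices to apply \emph{Kim}-stationarity but does not let you apply ordinary stationarity. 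In effect your argument proves item (3) of the corollary that follows the lemma (strongly Kim-stationary $q$, $a\ind^K_M N$), not Lemma~\ref{lemma:ind.to.DQ} itself. To close the gap while keeping your ``move $\bar b$'' strategy, you should instead stay with $\ind$: since $a\ind_M N$, $\bar b^*\ind_{Ma} N$, $a$ is a finite tuple, and $N$ is $|M|^+$-saturated, Proposition~5.4 of \cite{casasimpl} (left transitivity of non-forking over a sufficiently saturated model) gives $a\bar b^*\ind_M N$ and hence $\bar b^*\ind_M N$; together with $\bar c\ind_M N$, the assumed stationarity of $q^{\otimes I}|_M$ then yields $\bar b^*\equiv_N\bar c$. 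This is precisely the combination lemma the paper invokes in its Step 1, and it is what replaces your $\ind^K$-transitivity argument when the hypothesis is ordinary stationarity rather than Kim-stationarity.
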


\begin{proof}
Let $D_Q(a/M)=:\lambda<\omega$, it means by Lemma \ref{lemma:Qrank.NSOP1} that there 
exists $\big( (b^{\alpha}_i)_{i<\omega}, M^{\alpha}\big)_{\alpha<\lambda}$ such that
\begin{enumerate}
\item $M\subseteq M^0$,
\item $q$ is $M^0$-invariant (which comes for free as $q$ is $M$-invariant and $M\subseteq M^0$),
\item $M^{\alpha}\preceq\FC$, $(M^{\alpha})_{\alpha<\lambda}$ is continuous, $M^{\alpha+1}$ is $|M^{\alpha}|^+$-saturated,
\item $b^{\alpha}_{<\omega}M^{\alpha}\subseteq M^{\alpha+1}$,
\item $b^{\alpha}_{<\omega}\models q^{\otimes\omega}|_{M^{\alpha}}$,
\item $\tp(a/M)\;\cup\;\{\varphi(x;b^{\alpha}_0)\;|\;\alpha<\lambda\}$ is consistent,
\item $\{\varphi(x;b^{\alpha}_i)\;|\;i<\omega\}$ is inconsistent.
\end{enumerate}
\
\\
\textbf{Step 1.} 
\\
Let $d\models \tp(a/M)\;\cup\;\{\varphi(x;b^{\alpha}_0)\;|\;\alpha<\lambda\}$ and let $N''\preceq\FC$ be such that
$aN\equiv_M dN''$. 
By the existence axiom for forking independence, there exists $N'\equiv_{Md}N''$ such that
$$M^{<\lambda}b^{<\lambda}_{<\omega}\ind_{Md} N'.$$
Because $a\ind_M N$, we have that $d\ind_M N''$ and then also that $d\ind_M N'$.
As $N'$ is $|M|^+$-saturated, Proposition 5.4 from \cite{casasimpl} assures us that
$d\ind_M N'$ and $M^{<\lambda}b^{<\lambda}_{<\omega}\ind_{Md} N'$ combine into
$dM^{<\lambda}b^{<\lambda}_{<\omega}\ind_{M} N'$. Then monotonicity of $\ind$ gives us $b^{<\lambda}_{<\omega}\ind_{M} N'$.
\
\\
\textbf{Step 2.}
\\
Because $aN\equiv_M dN''\equiv_M dN'$ and $q(y)$ is $M$-invariant, we have that $D_Q(a/N)=D_Q(d/N')$, so it is enough to show that $D_Q(d/N')\geqslant\lambda$. Note that $\tp(d/N')\;\cup\;\{\varphi(x;b^{\alpha}_0)\;|\;\alpha<\lambda\}$ is consistent.
\
\\
\textbf{Step 3.}
\\
Note that 
$$b^{<\lambda}_{<\omega}=(b^{\lambda-1}_i)_{i<\omega}^\smallfrown\ldots^\smallfrown(b^0_i)_{i<\omega}\models
q^{\otimes\omega}\otimes\ldots\otimes q^{\otimes\omega}|_M.$$
Since $b^{<\lambda}_{<\omega}\ind_M N'$ and 
the type $q^{\otimes\omega}\otimes\ldots\otimes q^{\otimes\omega}|_M$ ($\lambda$-many repetitions) is
stationary, we have that also 
$$b^{<\lambda}_{<\omega}=(b^{\lambda-1}_i)_{i<\omega}^\smallfrown\ldots^\smallfrown(b^0_i)_{i<\omega}\models
q^{\otimes\omega}\otimes\ldots\otimes q^{\otimes\omega}|_{N'}.$$
\
\\
\textbf{Step 4.}
\\
Now, Lemma \ref{lemma:technical} gives us easily
$D_Q(d/N')\geqslant\lambda$.
\end{proof}

\begin{definition}
Consider $p(x)\in S(A)$. We call $p(x)$ \emph{Kim-stationary} if for each $B\supseteq A$ there is a unique Kim-nonforking extension of $p$ over $B$.
\end{definition}

\noindent
Repeating the proof of \cite[Lemma 11.6]{casasimpl}  we get:
\begin{fact}\label{stat_monot}
(arbitrary $T$) Let $\ind^*$ be any invariant ternary relation satisfying symmetry [over models] and extension [over models].
Then for any parameter set [any model] $M$ and $a,b$, if $\tp(ab/M)$ is 
$\ind^*$-stationary then $tp(a/M)$ is $\ind^*$-stationary.
\end{fact}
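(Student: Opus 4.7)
The plan is to verify $\ind^*$-stationarity of $\tp(a/M)$ directly by checking that any two $\ind^*$-nonforking extensions of it to a larger parameter set $B \supseteq M$ coincide. So fix such a $B$ and realizations $a_1, a_2$ with $a_i \equiv_M a$ and $a_i \ind^*_M B$ for $i \in \{1,2\}$; the goal is to show $a_1 \equiv_B a_2$.

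The key step is to lift each $a_i$ to a realization of $\tp(ab/M)$ while preserving independence from $B$. First pick an arbitrary $b^*_i$ with $a_i b^*_i \equiv_M ab$, using an automorphism fixing $M$ that sends $a$ to $a_i$. From $a_i \ind^*_M B$ and symmetry I obtain $B \ind^*_M a_i$; applying extension with the enlarged right-hand side $a_i b^*_i$ yields some $B' \equiv_{M a_i} B$ with $B' \ind^*_M a_i b^*_i$. Conjugating by an automorphism fixing $M a_i$ that sends $B'$ back to $B$ produces an element $b_i$ (the image of $b^*_i$) such that $a_i b_i \equiv_M ab$, and by invariance of $\ind^*$ followed by a final use of symmetry, $a_i b_i \ind^*_M B$.

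Now both $\tp(a_1 b_1 / B)$ and $\tp(a_2 b_2 / B)$ are $\ind^*$-nonforking extensions of the common type $\tp(ab/M)$, which is assumed to be $\ind^*$-stationary. Hence they coincide, so $a_1 b_1 \equiv_B a_2 b_2$; projecting to the first coordinate gives $a_1 \equiv_B a_2$, as required. The entire argument uses only invariance, symmetry and extension of $\ind^*$, and it goes through unchanged in both the bracketed variants (parameter sets vs.\ models), since all the base sets appearing in the argument are of the same kind as $M$. The one piece of bookkeeping that needs care is the interleaving of symmetry and extension used to lift the independence from $a_i$ to the pair $a_i b_i$, but this is exactly the maneuver performed for forking independence in \cite[Lemma 11.6]{casasimpl}, so no genuine obstacle is expected.
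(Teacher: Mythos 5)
Your proof is correct and follows the same strategy as the paper: the paper's own proof simply says "By extension, symmetry, and invariance, there are $b',b''$ such that $a'b'\equiv_M a''b''\equiv_M ab$, $a'b'\ind^*_M N$ and $a''b''\ind^*_M N$," and your symmetry--extension--conjugate--symmetry chain is exactly the maneuver that sentence compresses. You have filled in precisely the bookkeeping the paper elides, and your remark that all bases occurring in the argument are of the same kind as $M$ correctly explains why both the set and model variants go through.
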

\begin{proof}
Let $N\succeq M$ and let $a'\equiv_M a''\equiv_M a$ be such that $a'\ind^*_M N$ and $a''\ind^*_M N$.
By extension, symmetry, and invariance,  there are $b',b''$ such that $a'b'\equiv_M a''b''\equiv_M ab$, $a'b'\ind^*_M N$ and $a''b''\ind^*_M N$. Then by $\ind^*$-stationarity of $\tp(ab/M)$ we have that $a'b'\equiv_N a''b''$, so in particular $a'\equiv_N a''$. 
\end{proof}

\begin{remark}\label{finchar}
Let $p(x)$ be a complete type over $M\subseteq\FC$ with $x$ possibly infinite. 
\begin{enumerate}
    \item Let $T$ be simple, then $p(x)$ is stationary if and only if $p|_{x_0}$ is stationary for every finite subtuple $x_0$ of $x$.
    \item ($T$ being NSOP$_1$) 
    Let $M\preceq\FC$. We have that 
    $p(x)$ Kim-stationary if and only if $p|_{x_0}$ is Kim-stationary for every finite subtuple $x_0$ of $x$.
\end{enumerate}

\end{remark}
\begin{proof}
We start with the proof of (1).
The implication from left to right follows from Fact \ref{stat_monot}. Assume $p|_{x_0}$ is stationary for every finite subtuple $x_0$ of $x$ and let $q_0,q_1$ be global non-forking extensions of $p$. Then for every finite subtuple $x_0$ of $x$ we have that $q_0|_{x_0}$ and $q_1|_{x_0}$ are non-forking extensions of $p|_{x_0}$, so by stationarity of $p|_{x_0}$ they are equal, hence $q_0=q_1$ and $p$ is stationary. 

The argument for Kim-stationarity is exactly the same.
\end{proof}

\begin{lemma}\label{lemma:stationary}
 Let $A\subseteq\FC$, let $(I,<)$ be an infinite linear ordering without a maximal element, and let $q(y)\in S(\FC)$ be an $A$-invariant type.
\begin{enumerate}
    \item Let $T$ be simple. If $q|_A$ is stationary then also $q^{\otimes\omega}|_A$ is stationary.
    
    \item (any $T$) 
    If $q^{\otimes\omega}|_A$ is stationary then $q^{\otimes I}|_A$ is stationary.
    
    \item (any $T$)
    If $q^{\otimes\omega}|_A$ is Kim-stationary then $q^{\otimes I}|_A$ is Kim-stationary.
\end{enumerate}
\end{lemma}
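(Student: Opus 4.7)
My plan is to handle (1) by induction on the length of the tuple (reducing to finite tuples via Remark \ref{finchar}(1)), and (2), (3) by a uniform ``embed a finite subtuple of $I$ into an $\omega$-chain'' argument, using the hypothesis that $I$ has no maximum.

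For (1), by Remark \ref{finchar}(1) it suffices to prove that $q^{\otimes n}|_A$ is stationary for every $n < \omega$, and I would induct on $n$, with $n=1$ being the hypothesis. For the step from $n$ to $n+1$: take two non-forking realizations $(b_0,\ldots,b_n)$ and $(b_0',\ldots,b_n')$ of $q^{\otimes(n+1)}|_A$ over some $B \supseteq A$, restrict to the first $n$ coordinates (still non-forking by monotonicity), apply the inductive hypothesis, and move by an automorphism fixing $B$ to identify the first $n$ coordinates with a common tuple $\bar b$. The crucial observation is that in a simple theory, non-forking extensions of stationary types are stationary; since $q$ is $A$-invariant, $q|_{A\bar b}$ is a non-forking extension of the stationary $q|_A$, hence itself stationary. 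So $b_n$ and $b_n'$, both realizing $q|_{A\bar b}$ and non-forking over $A\bar b$ to $B\bar b$ (by base monotonicity of $\ind$ in simple theories), have equal type over $B\bar b$.

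For (2), given two non-forking extensions $p_1, p_2 \in S(B)$ of $q^{\otimes I}|_A$, the plan is to show $p_1|_{\bar y_0} = p_2|_{\bar y_0}$ for every finite subtuple $\bar y_0 = (y_{i_1},\ldots,y_{i_n})$ of variables (with $i_1 < \ldots < i_n$ in $I$), which suffices by finite character of complete types. Since $I$ has no maximal element, I can extend the chain $i_1 < \ldots < i_n$ to an infinite increasing chain $i_1 < \ldots < i_n < j_{n+1} < j_{n+2} < \ldots$ in $I$. Any subsequence of a Morley sequence in $q$ over $A$ is again a Morley sequence in $q$ over $A$, so the restriction of $q^{\otimes I}|_A$ to the corresponding variables coincides with $q^{\otimes\omega}|_A$. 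By monotonicity of non-forking, the restrictions of $p_1, p_2$ to these variables are non-forking extensions of $q^{\otimes\omega}|_A$ over $B$, hence equal by hypothesis; further restriction gives agreement on $\bar y_0$. Part (3) is obtained by replacing $\ind$ with $\ind^K$ throughout; the only extra ingredient needed is monotonicity of Kim-non-forking, which is immediate from the definition of Kim-forking as implying a disjunction of Kim-dividing formulas.

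I expect the main technical annoyance to lie in (1), specifically in making precise the two ingredients ``non-forking extensions of stationary types are stationary'' and base monotonicity of $\ind$ in simple theories; parts (2) and (3) are essentially mechanical once the finite-character reduction and the ``extend to an $\omega$-chain'' trick are in place.
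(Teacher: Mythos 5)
Your proposal is correct and follows essentially the same route as the paper. Parts (2) and (3) are verbatim the paper's argument: reduce to finite subtuples of variables, embed the finite increasing chain from $I$ into a copy of $\omega$ inside $I$ (possible precisely because $I$ has no maximum), observe the restriction realizes $q^{\otimes\omega}|_A$, and conclude by monotonicity of $\ind$ (resp.\ $\ind^K$) and the hypothesis. For (1), the paper also reduces via Remark~\ref{finchar} and inducts on $n$, but its inductive step (the ``Claim'') is proved by hand — an explicit automorphism, then symmetry, base monotonicity, normality, monotonicity, transitivity of $\ind$, and stationarity of $q|_A$; you package this into the standard fact that in a simple theory, non-forking extensions of stationary types are stationary (which itself follows from transitivity). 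This is a clean re-framing rather than a genuinely different argument; both rest on the same properties of $\ind$. One minor point worth spelling out when you write it up: passing from $\bar b b_n\ind_A B$ to $b_n\ind_{A\bar b}B\bar b$ uses symmetry, base monotonicity, symmetry again, and normality, not base monotonicity alone.
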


\begin{proof}
We start with the proof of (1).
By Remark \ref{finchar}, it is enough to show that for each $n<\omega$, the type $q^{\otimes n}|_A$ is stationary. This can be shown inductively by the use of the following claim:
\ 
\\
\textbf{Claim}.
If $p(x)\in S(\FC)$ is $A$-invariant, such that $p|_A$ is stationary, then $q\otimes p|_A$ is stationary.
\ 
\\
\textit{Proof of the claim:}
Let $B\supseteq A$, let $ab\models q\otimes p|_A$ be such that $ab\ind_A B$, and let $a'b'\models q\otimes p|_B$. Our goal is $ab\models q\otimes p|_B$.
As $ab\ind_A B$, we have $b\ind_A B$. Since $b'\models p|_B$ and $p$ is $A$-invariant, also $b'\ind_A B$.
Stationarity of $p|_A$ implies that there exists $f\in\aut(\FC/B)$ such that $f(b)=b'$,
thus $b\models p|_B$.
Because $a\models q|_{Ab}$, we have that $f(a)\models q|_{Ab'}$.
Since $q$ is $A$-invariant, we obtain that $f(a)\ind_A b'$.
On the other hand, $ab\ind_A B$ changes via $f$ into $f(a)b'\ind_A B$, then symmetry, base monotonicity, normality and monotonicity of $\ind$ give us $f(a)\ind_{Ab'}Bb'$. By transitivity of $\ind$,
$f(a)\ind_A b'$ and $f(a)\ind_{Ab'}Bb'$ imply that $f(a)\ind_A Bb'$, which
by the fact that $a'\models q|_{Bb'}$ (so $a'\ind_A Bb'$) and
by stationarity of $q|A$
implies $f(a)\equiv_{Bb'}a'$.
As $f(a)\models q|_{Bb'}$, we have that $a\models q|_{Bb}$, thus $ab\models q\otimes p|_B$ and we end the proof of the claim.

Now, we are moving to the proof of (2).
Assume that $\bar{a}=(a_i)_{i\in I}\models q^{\otimes I}|_A$ and $\bar{b}=(b_i)_{i\in I}\models q^{\otimes I}|_A$ are such that $\bar{a}\ind_A B$ and $\bar{b}\ind_A B$ for some $B\supseteq A$.
We need to show that $\tp(\bar{a}/B)=\tp(\bar{b}/B)$, which holds if and only if for each $n<\omega$ and each $i_1,\ldots,i_n\in I$, such that $i_1<\ldots<i_n$, we have $\tp(a_{i_n}\ldots a_{i_1}/B)=\tp(b_{i_n}\ldots b_{i_1}/B)$.
Consider such $i_1,\ldots,i_n\in I$ and choose any infinite sequence $I_0\subseteq I$ starting with $(i_1,\ldots,i_n)$ which has the order type of $(\omega,<)$.
Let $a_{I_0}:=(a_i)_{i\in I_0}$ and $b_{I_0}:=(b_i)_{i\in I_0}$.
Then $a_{I_0}\models q^{\otimes\omega}|_A$, $b_{I_0}\models q^{\otimes\omega}|_A$.
Monotonicity of $\ind$ gives us that $a_{I_0}\ind_A B$ and $b_{I_0}\ind_A B$.
As $q^{\otimes\omega}|_A$ is stationary we have that $a_{I_0}\equiv_B b_{I_0}$, so in particular
$\tp(a_{i_n}\ldots a_{i_1}/B)=\tp(b_{i_n}\ldots b_{i_1}/B)$.

The proof of (3) is similar to the proof of (2), as the only property of $\ind$ used was monotonicity, which also holds for $\ind^K$.
\end{proof}

\begin{definition}
Let $A\subseteq \FC$ and let $q(y)\in S(\FC)$ be $A$-invariant.
\begin{enumerate}
    \item We call $q|_A$ \emph{strongly stationary} if the type
    $q^{\otimes\omega}|_A$ is stationary.
    
    \item We call $q|_A$ \emph{strongly Kim-stationary} if the type
    $q^{\otimes\omega}|_A$ is Kim-stationary.  
\end{enumerate}
\end{definition}

\begin{remark}
Let $A\subseteq \FC$ and let $q(y)\in S(\FC)$ be $A$-invariant,
and let $(I,<)$ be an infinite linear ordering without a maximal element.
\begin{enumerate}
    \item If the type $q|_A$ is strongly stationary then $q^{\otimes I}|_A$ is stationary.
    
    \item 
    If in addition $T$ is simple, then the type $q|_A$ is strongly stationary if and only if
    $q|_A$ is stationary.

    \item If the type $q|_A$ is strongly Kim-stationary then $q^{\otimes I}|_A$ is Kim-stationary.
    
    \item 
    If in addition $A\subseteq M\preceq\FC$, 
    the type $q^{\otimes\omega}|_M$ is Kim-stationary if and only if for each finite subtuple $\bar{x}$ of $(x_i)_{i<\omega}$,
    the type $\big(q^{\otimes\omega}|_M\big)_{\bar{x}}$ is Kim-stationary.
\end{enumerate}
\end{remark}

\begin{question}
Let $M\preceq\FC$ and let $q(y)\in S(\FC)$ be $M$-invariant such that $q|_M$ is Kim-stationary.
Is the type $q^{\otimes\omega}|_M$ Kim-stationary?
\end{question}

\begin{corollary}
Let $T$ be NSOP$_1$ with existence.
Assume that $a\in\FC$, $M\preceq N\preceq\FC$,
that $q(y)\in S(\FC)$ is $M$-invariant and let $Q=\big((\varphi(x;y,),q(y))\big)$.
\begin{enumerate}
    \item Assume that $N$ is $|M|^+$-saturated and $q|_M$ is strongly stationary, then
    $$a\ind_M N\qquad\Rightarrow\qquad D_Q(a/M)=D_Q(a/N).$$
    
    \item Assume that $T$ is simple and $q|_M$ is stationary, then
    $$a\ind_M N\qquad\Rightarrow\qquad D_Q(a/M)=D_Q(a/N).$$
    
    \item If $q|_M$ is strongly Kim-stationary, then
    $$a\ind^K_M N\qquad\Rightarrow\qquad D_Q(a/M)=D_Q(a/N).$$
\end{enumerate}
\end{corollary}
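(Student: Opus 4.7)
The plan is as follows. In all three parts the inequality $D_Q(a/N)\leqslant D_Q(a/M)$ is for free from Lemma \ref{lemma:implies}, since $\tp(a/N)\vdash\tp(a/M)$, so in each case I only need to prove $D_Q(a/M)\leqslant D_Q(a/N)$.

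For (1), the assumption that $q|_M$ is strongly stationary means exactly that $q^{\otimes\omega}|_M$ is stationary; applying Lemma \ref{lemma:stationary}(2) upgrades this to stationarity of $q^{\otimes I}|_M$ for every infinite linear order $I$ without a last element, in particular for orders of the form $\omega^\smallfrown\ldots^\smallfrown\omega$. The hypotheses of Lemma \ref{lemma:ind.to.DQ} are then met verbatim, and the conclusion is immediate.

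For (2), simplicity together with Lemma \ref{lemma:stationary}(1) promotes ordinary stationarity of $q|_M$ to strong stationarity, so the only real obstruction is the saturation hypothesis on $N$ built into (1). I dispose of it via the standard extension trick: pick any $N'\succeq N$ that is $|M|^+$-saturated, and by extension of $\ind$ in simple theories choose $a'\equiv_N a$ with $a'\ind_M N'$. Applying (1) to $a',M,N'$ and then Lemma \ref{lemma:implies} gives
$$D_Q(a/M)=D_Q(a'/M)=D_Q(a'/N')\leqslant D_Q(a'/N)=D_Q(a/N).$$

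For (3), I mimic the proof of Lemma \ref{lemma:ind.to.DQ} throughout, substituting $\ind^K$ for $\ind$. Concretely: take $\lambda=D_Q(a/M)$ with witnesses $\big((b^\alpha_i)_{i<\omega},M^\alpha\big)_{\alpha<\lambda}$ as in Lemma \ref{lemma:Qrank.NSOP1}, pick $d$ realizing $\tp(a/M)\cup\{\varphi(x;b^\alpha_0):\alpha<\lambda\}$, and choose $N''$ with $aN\equiv_M dN''$. The existence axiom for Kim-forking (available in NSOP$_1$ with existence) supplies $N'\equiv_{Md}N''$ such that $M^{<\lambda}b^{<\lambda}_{<\omega}\ind^K_{Md}N'$; from $a\ind^K_M N$ one gets $d\ind^K_M N'$. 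Here the role played in Lemma \ref{lemma:ind.to.DQ} by Proposition 5.4 of \cite{casasimpl} (and the attendant saturation of $N'$) is taken over by symmetry and transitivity of $\ind^K$ over arbitrary sets in NSOP$_1$ with existence (cf.\ \cite{CKR}): combining $N'\ind^K_M d$ with $N'\ind^K_{Md}M^{<\lambda}b^{<\lambda}_{<\omega}$ yields $N'\ind^K_M dM^{<\lambda}b^{<\lambda}_{<\omega}$, hence $b^{<\lambda}_{<\omega}\ind^K_M N'$. Strong Kim-stationarity of $q|_M$ and Lemma \ref{lemma:stationary}(3) now give Kim-stationarity of $q^{\otimes I}|_M$ for $I$ consisting of $\lambda$ copies of $\omega$; since any Morley sequence in $q$ over $N'$ is automatically $\ind^K_M$-independent from $N'$ (via the $M$-invariant extension $q^{\otimes I}$ of its type), Kim-stationarity forces $b^{<\lambda}_{<\omega}\models q^{\otimes I}|_{N'}$. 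Finally Lemma \ref{lemma:technical} applied over $N'$ delivers $D_Q(d/N')\geqslant\lambda$, which transports back to $D_Q(a/N)\geqslant\lambda$ because $aN\equiv_M dN'$.

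The main obstacle I expect is in (3): verifying that the combining step in the proof of Lemma \ref{lemma:ind.to.DQ} can be executed using only symmetry/transitivity of $\ind^K$ (rather than a saturation-dependent argument as in \cite{casasimpl}), and that the subsequent lifting of the Morley-sequence property from $M$ to $N'$ really follows from Kim-stationarity; both should be manageable with the machinery of \cite{CKR}.
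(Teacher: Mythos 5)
Your proposal is correct. Parts (1) and (3) follow essentially the same route as the paper's proof: (1) unwinds the definition of strong stationarity via Lemma \ref{lemma:stationary} (the paper folds this into the remark preceding the corollary), and (3) reruns the proof of Lemma \ref{lemma:ind.to.DQ} with the same substitutions — use $\ind\Rightarrow\ind^K$ plus the existence axiom for $\ind$ to get the Kim-non-forking conjugate, replace the appeal to Proposition 5.4 of \cite{casasimpl} by transitivity of $\ind^K$ over arbitrary sets from \cite{CKR}, and then use strong Kim-stationarity in Step 3. One tiny terminological slip in (3): there is no separate ``existence axiom for Kim-forking''; what you want (and what the paper does) is the existence axiom for forking, which gives $M^{<\lambda}b^{<\lambda}_{<\omega}\ind_{Md}N'$, and then $\ind\Rightarrow\ind^K$. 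The content is the same.

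Part (2) is where you genuinely diverge from the paper. The paper re-runs the proof of Lemma \ref{lemma:ind.to.DQ}, replacing the saturation-based combining step by ordinary transitivity of $\ind$, which simplicity provides. You instead keep (1) as a black box, choose a $|M|^+$-saturated $N'\succeq N$, use the extension axiom for $\ind$ to find $a'\equiv_N a$ with $a'\ind_M N'$, apply (1) to $(a',M,N')$, and then descend back to $N$ via Lemma \ref{lemma:implies} and invariance. This is a clean alternative: it avoids opening up the proof of Lemma \ref{lemma:ind.to.DQ} and instead observes that the saturation hypothesis in (1) is never a real restriction given extension of non-forking (which holds in any theory) and the monotonicity of the rank under strengthening the base. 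The paper's route is marginally more self-contained; yours is marginally more modular. Both are valid.
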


\begin{proof}
(1) is exactly the content of Lemma \ref{lemma:ind.to.DQ}.
For (2), we use Lemma \ref{lemma:stationary} and repeat the proof of Lemma \ref{lemma:ind.to.DQ}, where in Step 1, instead of the saturation assumption we use transitivity, which naturally holds in simple theories.

(3) also uses a modification of the proof of Lemma \ref{lemma:ind.to.DQ}.
More precisely, in Step 1, we use that $\ind$ implies $\ind^K$ and that transitivity (over arbitrary sets) holds in NSOP$_1$ with existence. We obtain $dM^{<\lambda}b^{<\lambda}_{<\omega}\ind^K_M N'$ and then $b^{<\lambda}_{<\omega}\ind^K_M N'$.
Then, Step 3 follows by the definition of strong Kim-stationarity. The rest (Step 2 and Step 4) remains the same.
\end{proof}

\noindent
It is worth to compare (2) and (3) in the above corollary with Theorem 4.7(2) from \cite{CKR}. 
We are aware that we introduced a different notion of rank, but because of that, we were able to drop the assumption on simplicity from \cite{CKR}[Theorem 4.7.(2)] at the cost of assuming strong stationarity of the type $q$ and so to provide partial answer to the counterpart to Question 4.9 from \cite{CKR} for our notion of rank.

In Example \ref{rank_drop}, we will observe that the stationarity assumption in Lemma \ref{lemma:ind.to.DQ} cannot be removed. Moreover, in Proposition \ref{prop:stationarity.bilinear}, we will see that in the case of vector spaces with a bilinear form, being strongly Kim-stationary is equivalent to being Kim-stationary.

\section{Forking in $T_{\infty}$}\label{sec:bilinear}
In this section we describe forking in the theory 
$T_\infty$ of vector spaces with a generic bilinear form, answering in particular a question  about equivalence of dividing and dividing finitely stated in the first paragraph of Section 12.5 in \cite{Granger}. Before that let us recall the basic definitions and provide some facts related to the previous sections.

Let $T_{\infty}$ be the theory of two-sorted vector spaces over an algebraically closed field with a sort $V$ for vectors and a sort $K$ for scalars, equipped with a non-degenerate symmetric (or alternating) bilinear form, as studied in \cite{Granger}. Then $T$ is NSOP$_1$ by \cite[Corollary 6.1]{ArtemNick}
and it has existence by \cite[Proposition 8.1]{bilinear}.
Fix a monster model $\FC=(V(\FC), K(\FC))$ of $T_{\infty}$.
If  $A\subseteq \FC\models T_\infty$, put
$\langle A \rangle:=\Lin_{K(\FC)}(V(A))$ and  let $A_K:=A\cap K(\FC)$.  
By \cite[Corollary 8.13]{bilinear}, for any sets $A,B,C$ we have that
$A\ind^K_C B$ if and only if 
\begin{itemize}
    \item $\langle AC \rangle\cap \langle BC \rangle =\langle C \rangle$ and
    \item $\dcl(AC)_K\ind ^{ACF}_{\dcl(C)_K}\dcl(BC)_K$,
\end{itemize}
where $\dcl(\ast)_K$ means $(\dcl(\ast))_K$ (for an algebraic description of $\dcl(*)$ see \cite[Proposition 9.5.1]{Granger}).
For a discussion about forking and dividing in $T_\infty$ see \cite[Subsection 12.3]{Granger}
and the rest of this section below.

\begin{example}\label{rank_drop}
Let $M\prec N\models T_\infty$, $v\in V(N)\setminus V(M)$ and let $q_0$ be the unique global $\ind^\Gamma$-generic type in $V$. Then  $q_0|_M(x)\cup \{x\perp v\}$ does not Kim-fork over $M$ so it has a realisation $w$ with $w\ind^K_M N$. Let $\varphi(x,y)$ express that $x\in \langle y\rangle \setminus \{0\}$ with $x$ and $y$ being single vector variables, and $Q=\{(\varphi(x,y),q_0(y))\}$. We claim that $D_Q(w/N)=0$ and $D_Q(w/M)\geq 1$, which shows that the \textbf{stationarity assumption in Lemma \ref{lemma:ind.to.DQ} cannot be removed}. Indeed, note that if $N\prec N_0$ and $b\models q_0|_{N_0}$ then $b$ is not orthogonal to $v$, so $\tp(w/N)\cup \varphi(x,b)$ is inconsistent, hence $D_Q(w/N)=0$. On the other hand, $\tp(w/M) \cup \varphi(x,b)$ is consistent and $\varphi(x,b)$ $q_0$-forks over $N_0$ (as the sets $\langle b_i\rangle\setminus \{0\}$ are pairwise disjoint for a Morley sequence $(b_i)_{i<\omega}$ in $q_0$ over $N_0$, as such a sequence needs to be linearly independent since $q_0$ is generic). Thus $D_Q(w/M)\geq 1$ (and it is actually easy to see that $D_Q(w/M)= 1$). 
\end{example}

\begin{proposition}\label{prop:stationarity.bilinear}
Let $p(x)=\tp(a/M)$ be a complete type over $M\models T_\infty$. Then $p(x)$ is Kim-stationary iff $a\subseteq K(\FC)\cup \langle V(M)\rangle$.
Hence $p(x)$ is Kim-stationary if and only if it is strongly Kim-stationary. 
\end{proposition}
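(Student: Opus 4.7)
The plan is to establish both implications separately and then bootstrap from Kim-stationarity to strong Kim-stationarity using finite character.

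For the implication $(\Leftarrow)$, suppose $a\subseteq K(\FC)\cup\langle V(M)\rangle$ and write each vector coordinate of $a$ as $v_i=\sum_j\lambda_{ij}u_{ij}$ for fixed $u_{ij}\in V(M)$ and scalars $\lambda_{ij}\in K(\FC)$; these scalars, together with the scalar part $c$ of $a$, carry all the ``real'' data of $a$ over $M$. Fix $B\supseteq M$ and take any two Kim-non-forking extensions $a',a''$ of $p$ to $B$. Since $a',a''\equiv_M a$, their vector coordinates also lie in $\langle V(M)\rangle$, with the same $u_{ij}$'s and new scalar coefficients $\lambda'_{ij},\lambda''_{ij}$. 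The linear clause of the characterization of $\ind^K$ from \cite[Corollary 8.13]{bilinear} is automatic since $\langle a'M\rangle=\langle a''M\rangle=\langle M\rangle$, so the criterion reduces to the ACF-independence $\dcl(a'M)_K\ind^{ACF}_{M_K}\dcl(BM)_K$ and its counterpart for $a''$. By non-degeneracy of the form together with the axioms of $T_\infty$ we can find duals to the $u_{ij}$'s inside $V(M)$, which force $\lambda'_{ij}\in\dcl(a'M)_K$. As $M_K$ is algebraically closed, stationarity in ACF over $M_K$ forces the scalar types of $(c',\lambda')$ and $(c'',\lambda'')$ over $\dcl(BM)_K$ to coincide, and quantifier elimination in $T_\infty$ lifts this to $a'\equiv_B a''$.

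For the implication $(\Rightarrow)$, suppose some vector coordinate $v$ of $a$ satisfies $v\notin\langle V(M)\rangle$. Choose $w\in V(\FC)$ with $w\ind^K_M Ma$ and set $B:=Mw$. The crucial observation is that, because $v\notin\langle V(M)\rangle$ and $w$ is Kim-generic over $Ma$, the inner product $(v,w)$ is a transcendental free parameter over $M_K$: for any two distinct scalars $\alpha_1\neq\alpha_2\in K(\FC)$, one finds tuples $a_1,a_2$ with $a_i\equiv_M a$ and $(v_i,w)=\alpha_i$, where $v_i$ denotes the image of $v$ in $a_i$. Concretely, one picks an automorphism of $\FC$ over $M$ sending $w$ to a vector $w'$ with prescribed value $(v,w')=\alpha_i$ (using precisely $w\ind^K_M Ma$ and the freedom of $(v,w)$), and pulls it back. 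Both $a_i$ can then be checked to satisfy $a_i\ind^K_M B$ by direct inspection of the two clauses of the $\ind^K$ criterion, yet $a_1\not\equiv_B a_2$ because $(v_1,w)\neq(v_2,w)$ is an $Mw$-formula. The main technical obstacle I anticipate is this simultaneous verification that the constructed $a_1,a_2$ are genuinely Kim-non-forking extensions of $p$ realising the prescribed inner products; this parallels the construction underlying Example \ref{rank_drop}.

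For the final ``hence'': if $p$ is Kim-stationary, the equivalence just proved forces $a\subseteq K(\FC)\cup\langle V(M)\rangle$. Any realisation $(a_i)_{i<\omega}$ of $p^{\otimes\omega}|_M$ has $a_i\equiv_M a$ for each $i$, so every vector coordinate of every $a_i$ still lies in $\langle V(M)\rangle$, and hence the infinite joint tuple $(a_i)_{i<\omega}$ is contained in $K(\FC)\cup\langle V(M)\rangle$. Applying the $(\Leftarrow)$ direction to each finite sub-tuple of $(a_i)_{i<\omega}$ and invoking Remark \ref{finchar}(2), we conclude Kim-stationarity of $p^{\otimes\omega}|_M$, i.e.\ $p$ is strongly Kim-stationary.
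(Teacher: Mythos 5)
Your $(\Leftarrow)$ direction is correct but takes a different route from the paper. You work directly with the characterization of $\ind^K$ from \cite[Corollary~8.13]{bilinear}, observe that the linear clause trivializes when the vector coordinates of $a$ lie in $\langle V(M)\rangle$, pull the coefficients $\lambda_{ij}$ into the scalar sort via duals in $V(M)$, and then invoke stationarity in ACF over the algebraically closed $M_K$. The paper instead argues structurally: by compactness $p(x)\vdash x\subseteq V_0\cup K(\FC)$ for a finite-dimensional $M$-definable $V_0$, and since $V_0\cup K(\FC)$ is stably embedded and of finite Morley rank with $V_0(M)\cup K(M)$ an elementary submodel, Kim-independence coincides with forking independence there and stationarity is inherited. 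Your argument is more computational and requires carefully checking that the scalar types determine the full $T_\infty$-type over $B$; the paper's is shorter once one accepts the stable-embeddedness package. Both are fine. The ``hence'' paragraph is correct and matches the paper's implicit reasoning via Remark~\ref{finchar}.

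The $(\Rightarrow)$ direction has a genuine gap. You acknowledge that ``the main technical obstacle'' is verifying that the constructed $a_1,a_2$ are simultaneously $\equiv_M a$, Kim-non-forking over $M$, and realize prescribed inner products with $w$ -- but this is exactly the substance of the direction, and it is not resolved. Moreover, the claim that ``for any two distinct scalars $\alpha_1\neq\alpha_2\in K(\FC)$'' one can realize $(v_i,w)=\alpha_i$ while keeping $a_i\ind^K_M Mw$ is too strong: the $\ind^K$ criterion requires $\dcl(a_iM)_K$ to be ACF-independent from $\dcl(Mw)_K$ over $M_K$, so $(v_i,w)$ cannot take an arbitrary prescribed scalar value; and if $(v_1,w)$ and $(v_2,w)$ are both generic over $\dcl(Mw)_K$ it is not obvious what $Mw$-formula distinguishes the two types. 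The automorphism construction also tacitly assumes the automorphism $\sigma_i$ fixes $\alpha_i$, which holds only for $\alpha_i\in M_K$, further narrowing the available targets. The paper sidesteps all of this with a concrete construction: it takes $v\in V(\FC)$ orthogonal to $M$ with $[v,v]=1$, lets $b$ realize the $\Gamma$-independent extension of $p'$ over $N\succ M$ containing $v$, and sets $b':=b-[b,v]v$. Then $[b',v]=0$, so $b'$ and $b$ are separated by the $N$-formula $[x_0,v]=0$; and $b'\equiv_M b$, $b'\ind^K_M N$ are verified directly by quantifier elimination and the $\ind^K$ criterion, using that $K(\dcl(b'N))\subseteq K(\dcl(bN))$. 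That explicit linear ``reflection'' is the missing idea: it produces a definable scalar value ($0$) rather than trying to realize two prescribed ones, and makes the independence check straightforward. If you specialize your construction to $\alpha_1=[b,v]$ and $\alpha_2=0$ and replace the automorphism argument with the explicit $b'=b-[b,v]v$, your approach converges to the paper's.
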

\begin{proof}
By Remark \ref{finchar}, we may assume $x$ is a finite tuple of variables. Assume the right-hand side first. Then by compactness $p(x)\vdash x\subseteq V_0\cup K(\FC)$ for some finite-dimensional subspace $V_0\leq V(\FC)$. As $V_0\cup K(\FC)$ is stably embedded in $\FC$ and definable over $M$ in the pure field $K$, we can view $p(x)$ as a complete type over $V_0(M)\cup K(M)$ in the finite Morley rank structure $V_0\cup  K(\FC)$. Hence Kim-stationarity of $p(x)$ follows as in $V_0\cup K(\FC)$ Kim-independence coincides with forking independence and $V_0(M)\cup K(M)\prec V_0\cup K(\FC)$.

Now assume that $p(x)$ is stationary and let $p'(x_0)$ be its restriction to a single vector variable $x_0$. By Fact \ref{stat_monot}, $p'(x_0)$ is Kim-stationary. Let $a_0\models p'(x_0)$ be the coordinate of $a$ corresponding to $x_0$.
Suppose for a contradiction that $a_0\notin \langle M \rangle$. 
Let $v\in \V(\FC)$ be orthogonal to $M$ with $[v,v]=1$ and let $N\succ M$ be such that $v\in N$.
Let $r(x_0)$ be the unique $\Gamma$-independent extension of $p'(x_0)$ over $N$ and let $b\models r(x_0)$. Put $b':=b-[b,v]v$. 
As $b\ind^\Gamma _M N$, we get that $[b,v]\notin N$ (see (3) in  \cite[Definition 9.35]{kaplanramsey2017}), so in particular $[b,v]\neq 0$. Hence, as $b$ and $v$ are linearly independent over $K(M)$ (as $b\notin K(N)$), we have $b'\notin \langle N \rangle$. Also, for any $m\in V(M)$ we have $[b',m]=[b,m]-[b,v][v,m]=0$, and $[b',b']=[b,b]+[b,v][v,v]-2[b,v]^2\notin M$ as $[b,v]$ is transcendental over $K(N)$. Thus $b'\equiv_M b $ by quantifier elimination and $K(\dcl(b'N))\subseteq K(\dcl(b'N))$, so $b\ind^K_M N$ implies that $b'\ind^K_M N$ (see \cite[Corollary 8.13]{bilinear}).
But $[b',v]=[b,v]-[b,v][v,v]=0\neq [b,v]$, so $\tp(b/N)$ and $\tp(b'/N)$ are two distinct Kim-independent extensions of $p'(x_0)$, which contradicts Kim-stationarity of $p'(x_0)$.
\end{proof}

From now on, we assume the bilinear form is symmetric; the arguments in the alternating case are analogous. 
For $m<\omega$, $T_m$ denotes the theory of $m$-dimensional vector spaces over an algebraically closed field equipped with a non-degenerate symmetric bilinear form.
For the definitions of dividing finitely and $\Gamma$MS-dividing see Definitions 12.3.8 and 12.3.11 in \cite{Granger}. Note that, by Kim's Lemma, $\Gamma$MS-dividing is the same as Kim dividing.

By $\omega^*=\{i^*:i<\omega\}$ we will denote a copy of $\omega$ with the reversed order.

    \begin{proposition}\label{div_fin}
		Let $A$ be a countable set of parameters and $\varphi(x,b)$ a formula which divides over $A$. Then $\varphi(x,b)$ divides finitely over $A$ (so it divides uniformly over $A$ by \cite[Lemma 12.3.9]{Granger}).
		
		Moreover, if $k$ is such that $\varphi(x,b)$ $k$-divides over $A$, then there is an $A$-indiscernible sequence witnessing this contained in a model of $T_{2k|V(b)|}$.
	\end{proposition}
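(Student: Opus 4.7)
The plan is to convert a given $A$-indiscernible sequence witnessing $k$-dividing of $\varphi(x,b)$ into one whose vector coordinates live inside a fixed $2kn$-dimensional subspace, where $n = |V(b)|$. I would start with an $A$-indiscernible sequence $(b_i)_{i<\omega}$ with $b_0 = b$ and $\{\varphi(x,b_i) : i<\omega\}$ $k$-inconsistent. By $A$-indiscernibility, the $k$-inconsistency is equivalent to $\{\varphi(x,b_{i_1}),\ldots,\varphi(x,b_{i_k})\}$ being inconsistent for every increasing $k$-tuple, which only depends on the common type $\tp(b_0\ldots b_{k-1}/A)$. So the question reduces to realising this one type inside a small model.

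Next, I would apply the algebraic description of $T_\infty$-types through quantifier elimination (from \cite{Granger}) to observe that $\tp(b_0\ldots b_{k-1}/A)$ is pinned down by (i) the linear-algebraic configuration of the vector coordinates over $\langle V(A)\rangle$, and (ii) the ACF-type over $K(A)$ of the scalar coordinates together with the bilinear-form values $[v,w]$ for $v,w \in V(Ab_0\ldots b_{k-1})$. The joint vector coordinates of $(b_0,\ldots,b_{k-1})$ span at most $kn$ dimensions beyond $\langle V(A)\rangle$; the extra factor of $2$ will be used to accommodate an $\omega$-indexed family of $n$-tuples inside a fixed finite-dimensional space while keeping every $k$-subtuple in the same linear configuration as the original.

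Then, inside a subspace $V_0 \leq V(\FC)$ of dimension $2kn$ on which the form restricts non-degenerately (so $V_0$ together with the scalar sort is a model of $T_{2kn}$), I would produce an infinite family of $n$-tuples of vectors in $V_0$ lying in ``$k$-wise general position'' with respect to $A$ (any $k$ of them realising the same linear configuration type over $A$ as the original), and simultaneously arrange the scalar data so as to match the required ACF-type over $K(A)$; the characterisation of Kim-independence from \cite[Corollary 8.13]{bilinear} provides the bridge between the linear and field-theoretic pieces. A Ramsey/compactness extraction, using the assumed countability of $A$, would then upgrade the resulting weakly-indiscernible family to a genuinely $A$-indiscernible sequence $(c_i)_{i<\omega}$ inside $V_0$ with $c_0 \equiv_A b$ and every $k$-subtuple sharing the type of $(b_0,\ldots,b_{k-1})$, so that $\{\varphi(x,c_i)\}$ remains $k$-inconsistent.

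The main difficulty I anticipate is the dimension count: getting a sharp bound of $2kn$ rather than something larger requires a careful analysis of how many $n$-subspaces in ``$k$-wise general position'' relative to $A$ can coexist inside a $2kn$-dimensional ambient space, and showing that the scalar-field data can be realised over $K(A)$ inside the residue field of $V_0$ (this will use that $K(\FC)$ is algebraically closed and that ACF has good extension properties). The countability of $A$ is used in the extraction step and to reduce any potential size issues with matching ACF-types; without it one would need to work around possible non-existence of the needed generic realisations.
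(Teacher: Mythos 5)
Your approach is fundamentally different from the paper's, and it has a real gap. The paper's proof hinges on a clean reduction: start with an $A$-indiscernible sequence $(b_i)_{i<\omega\frown\omega^*}$ witnessing $k$-dividing, observe that for every $i$ the type $\tp(b_{i^*}/Ab_{<\omega}b_{(<i)^*})$ is finitely satisfiable in $b_{<\omega}$, and conclude that $(b_{i^*})_{i<\omega}$ is a Morley sequence over the larger base $C := Ab_{<\omega}$. Thus $\varphi(x,b)$ Kim-divides over $C$, hence by Kim's Lemma $\Gamma$MS-divides over $C$, and then Granger's Lemma 12.3.12 (which is \emph{about} $\Gamma$MS-dividing) produces both the ``divides finitely'' conclusion and, via the proof of that lemma together with the fact that $k$ elements of the $\Gamma$-Morley sequence contain at most $k|V(b)|$ vectors, the $T_{2k|V(b)|}$ bound. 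The entire work is in the passage from ordinary dividing over $A$ to $\Gamma$MS-dividing over a suitable $C$; the dimension analysis is delegated to Granger.

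You skip this reduction entirely and attempt to build the small-dimensional witness directly, which means you are implicitly re-proving Lemma 12.3.12 at the same time as the reduction, and the crucial step --- producing infinitely many $n$-tuples in a fixed $2kn$-dimensional subspace so that every $k$ of them realise the same complete type over $A$ (linear configuration \emph{and} all bilinear-form values \emph{and} the scalar ACF-data) --- is left as a hope. This is precisely the hard part: the form on $V_0$ is non-degenerate, so the Gram-matrix constraints on any $k$-subtuple interact with the demand for $k$-wise linear independence inside a $2kn$-dimensional box, and nothing in your sketch explains why these constraints are simultaneously satisfiable. Moreover, you do not actually need the new sequence $(c_i)$ to realise the same type as $(b_0,\dots,b_{k-1})$ --- only to be $A$-indiscernible, start with a realisation of $\tp(b/A)$, and keep the $\varphi$-instances $k$-inconsistent --- so insisting on type-matching is both unnecessary and a possible source of obstruction. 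The appeal to \cite[Corollary 8.13]{bilinear} is also misplaced at the point you invoke it, since you have not set up a Kim-independence situation over an appropriate base (that is exactly what the Morley-extraction over $C$ accomplishes in the paper). Finally, the factor of $2$ in the bound comes in the paper from \cite[Fact 3.2]{bilinear} (any $m$ vectors sit inside a model of $T_{2m}$), not from a general-position count inside $V_0$ as you suggest, and the countability of $A$ plays no visible role in the paper's argument.

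Concretely: supply the $\omega\frown\omega^*$ stretching, show the reversed tail is Morley over $C=Ab_{<\omega}$, invoke Kim's Lemma to pass to $\Gamma$MS-dividing, and then cite \cite[Lemma 12.3.12]{Granger} rather than attempting to rebuild it.
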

\begin{proof}
    Let $k$ be such that $\varphi(x,b)$ $k$-divides over $A$.
	By compactness we can find an $A$-indiscernible sequence $(b_i)_{i<\omega\frown \omega^*}$ with $b_{0^*}=b$ such that the set $\{\varphi(x,b_i):i<\omega\frown \omega^*\}$ is $k$-inconsistent. Note that for any $i$ the type $tp(b_{i^*}/Ab_{<\omega}b_{(<i)^*})$ is finitely satisfiable in $b_{<\omega}$, hence in particular, putting $C:=Ab_{<\omega}$ we get that the sequence $I:=(b_{i^*})_{i<\omega}$ is Morley over $C$. Hence $\varphi(x,b)$ Kim-divides over $C$, so (by Kim's Lemma), it $\Gamma$MS-divides over $C$. Thus, by \cite[Lemma 12.3.12]{Granger}, $\varphi(x,b)$ divides finitely over $C$ so, as $A\subseteq C$, it divides finitely over $A$.
	
	Moreover, the proof \cite[Lemma 12.3.12]{Granger} gives that a $C$-indiscernible sequence witnessing $k$-dividing of $\varphi(x,b)$ over $C$ can be found in an $R$-dimensional subspace of $V(\FC)$ provided that there is a model $N_R\models T_R$ containing $k$ elements of a $\Gamma$-Morley sequence witnessing $\Gamma$MS-dividing of $\varphi(x,b)$ over $C$. But such $k$ elements contain at most $k|V(b)|$ vectors, so there is a model of $T_{2k|V(b)|}$ containing all of them (see e.g. \cite[Fact 3.2]{bilinear}). This gives the `moreover' clause.
\end{proof}
By \cite[Remark 12.3.4]{Granger} we conclude:
\begin{corollary}
	In $T_\infty$, forking and dividing coincide: for any small set $A$ and any formula $\varphi(x,b)$ we have that $\varphi(x,b)$ forks over $A$ if and only if $\varphi(x,b)$ divides over $A$.
\end{corollary}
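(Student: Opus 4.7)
The plan is to combine Proposition \ref{div_fin} with the standard principle that ``dividing uniformly implies forking equals dividing,'' which is what \cite[Remark 12.3.4]{Granger} packages in Granger's framework. Only the direction forking $\Rightarrow$ dividing requires work, since dividing always implies forking.

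Suppose $\varphi(x,b)$ forks over $A$; then there exist formulas $\psi_1(x,c_1),\ldots,\psi_n(x,c_n)$, each dividing over $A$, such that $\varphi(x,b)\vdash\bigvee_{i\le n}\psi_i(x,c_i)$. First I would reduce to a countable parameter set: the dividing witness for each $\psi_i(x,c_i)$ uses only countably many parameters from $A$, so after unioning these with the parameters appearing in $\varphi(x,b)$ and in the $c_i$'s, I obtain a countable $A_0\subseteq A$ over which each $\psi_i(x,c_i)$ still divides and over which $\varphi(x,b)$ still forks.

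Next, I would apply Proposition \ref{div_fin} separately to each $\psi_i(x,c_i)$, producing uniform inconsistency exponents $k_i$ so that $\psi_i(x,c_i)$ divides uniformly over $A_0$ in the sense of \cite[Lemma 12.3.9]{Granger}. The remaining step is a Ramsey/compactness trick: extract an $A_0$-indiscernible sequence $(c_1^s,\ldots,c_n^s)_{s<\omega}$ of copies of the full tuple $(c_1,\ldots,c_n)$. Each coordinate sequence $(c_i^s)_{s<\omega}$ is then $A_0$-indiscernible, and by uniformity the family $\{\psi_i(x,c_i^s)\}_{s<\omega}$ is $k_i$-inconsistent. Setting $N:=\sum_{i\le n}(k_i-1)+1$, any common realisation of $N$ of the formulas $\bigvee_{i\le n}\psi_i(x,c_i^s)$ would, by pigeonhole, realise $k_i$ instances of some fixed $\psi_i$, contradicting the $k_i$-inconsistency of $\{\psi_i(x,c_i^s)\}_{s<\omega}$. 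Hence $\{\bigvee_{i\le n}\psi_i(x,c_i^s)\}_{s<\omega}$ is $N$-inconsistent, so the disjunction divides over $A_0$, and with it so does $\varphi(x,b)$, first over $A_0$ and hence over $A$.

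The hard part is Proposition \ref{div_fin} itself, which has already been established by transferring the $k$-inconsistency across a Morley sequence in a finitely satisfiable type into a finite-dimensional model $T_{2k|V(b)|}$ via $\Gamma$MS-dividing and \cite[Lemma 12.3.12]{Granger}. Modulo that proposition the present argument is routine; the only subtlety is checking that the uniform exponents $k_i$ delivered by Proposition \ref{div_fin} genuinely bound inconsistency on every $A_0$-indiscernible sequence of copies of $c_i$, which is exactly the content of ``dividing uniformly'' and allows the pigeonhole step to go through independently of the particular indiscernible sequence chosen.
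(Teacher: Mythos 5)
Your plan is the right one and matches the paper's: the paper proves the corollary by citing Proposition~\ref{div_fin} together with Granger's Remark~12.3.4, and what you are doing is re-deriving that remark by the standard pigeonhole argument. However, the write-up has two genuine gaps in the final step, both concentrated in the last sentence.

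First, you extract an indiscernible sequence of copies of $(c_1,\dots,c_n)$ only, and then conclude ``so the disjunction divides over $A_0$, and with it so does $\varphi(x,b)$.'' That inference is not valid: if $\varphi(x,b)\vdash\chi$ and $\chi$ divides, it does not follow that $\varphi(x,b)$ divides (e.g.\ $\varphi$ could pin $x$ down to a single element of the base). To get dividing of $\varphi(x,b)$ you must carry $b$ along: extract an indiscernible sequence $(b^s,c_1^s,\dots,c_n^s)_{s<\omega}$ in $\operatorname{tp}(b\,c_1\dots c_n/A)$ starting at $(b,\bar c)$; then $\varphi(x,b^s)\vdash\bigvee_i\psi_i(x,c_i^s)$ for every $s$, and the pigeonhole argument shows $\{\varphi(x,b^s)\}_{s<\omega}$ is $N$-inconsistent, which is what witnesses dividing of $\varphi(x,b)$.

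Second, ``divides over $A_0$ and hence over $A$'' is the wrong monotonicity. Dividing passes from a larger base to a smaller one (an $A$-indiscernible sequence is $A_0$-indiscernible), not the other way around. The fix is to extract the sequence $(b^s,\bar c^{\,s})_s$ to be $A$-indiscernible in the first place. It is then automatically $A_0$-indiscernible, so the uniform bound $k_i$ coming from Proposition~\ref{div_fin} applied over the countable $A_0$ still governs each coordinate sequence $(c_i^s)_s$; the pigeonhole step then yields $N$-inconsistency of $\{\varphi(x,b^s)\}_s$ along an $A$-indiscernible sequence, giving dividing over $A$ directly. The countable $A_0$ is used only to be allowed to invoke Proposition~\ref{div_fin}; everything else should be run over $A$. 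With these two adjustments your argument is correct and is exactly the content the paper outsources to Granger's Remark~12.3.4.
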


Next, we will describe dividing of formulae in $T_\infty$ in terms of dividing in the $\omega$-stable theories $T_m$ (which are interpretable in $ACF$). We will need to recall some definitions and facts about approximating sequences.

By quantifier elimination in $T_\infty$   (see \cite[9.2.3]{Granger}, \cite[Fact 2.8]{bilinear}), we may (and will) restrict ourselves to considering  only  quantifier-free formulae.

\begin{fact}\label{cons}
Let $M\models T_m$ (with $M\subseteq \FC\models T_\infty$) and let $\varphi(x,a)$ be a (quantifier-free) formula with $a\subseteq M$. Then if $\varphi(x,a)$ is consistent in $T_\infty$ and $m\geq 2(|V(x)|+|V(a)|)$, then $\varphi(x,a)$ is consistent in $M$. 
\end{fact}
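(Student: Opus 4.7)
The plan is to first base-change $M$ along the field sort, producing a model $N\models T_m$ inside $\FC$ with $M\preceq N$, and then realize $\varphi(x,a)$ inside $N$ via a Witt-type extension argument; the elementary inclusion then transfers the existential witness back to $M$.

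Concretely, pick a $K(M)$-basis $e_1,\dots,e_m$ of $V(M)$. Its Gram matrix $G=([e_i,e_j])$ is invertible over $K(M)$, hence over $K(\FC)$, so $e_1,\dots,e_m$ remain $K(\FC)$-linearly independent inside $V(\FC)$. Let $V(N)$ be their $K(\FC)$-span and put $N:=(V(N),K(\FC))$ with the restricted form; then $N\models T_m$ and $M$ is an $L$-substructure of $N$. Since $T_m$ is interpretable in $\ACF$ via the coordinate description relative to any basis and the Gram matrix, and $\ACF$ is model-complete, the elementary inclusion $K(M)\preceq K(\FC)$ upgrades to $M\preceq N$.

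Now fix $b=(b_V,b_K)\in\FC$ realizing $\varphi(x,a)$. The $K(\FC)$-subspace $W:=V(a)+\langle b_V\rangle\subseteq V(\FC)$ has dimension $d\leq|V(a)|+|V(x)|$. Using that $V(\FC)$ is non-degenerate and infinite-dimensional, enlarge $W$ to a non-degenerate $K(\FC)$-subspace $W^{\ast}\subseteq V(\FC)$ of dimension exactly $m$: first cap off the radical of $W$ by adjoining dual vectors (reaching a non-degenerate space of dimension at most $2d\leq m$), then fill up with orthogonal hyperbolic planes. Non-degenerate symmetric bilinear spaces of the same dimension over an algebraically closed field of characteristic $\neq 2$ are isometric, and Witt's extension theorem allows prescribing such an isometry on any given subspace; so choose a $K(\FC)$-isometry $\tau\colon W^{\ast}\to V(N)$ that is the identity on $V(a)$. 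Setting $b':=(\tau(b_V),b_K)\in N$, the tuples $(a,b)$ and $(a,b')$ share identical linear relations, identical Gram data, and the same scalar coordinates, so by quantifier elimination in $T_\infty$ they realize the same quantifier-free $\FC$-type; in particular $\FC\models\varphi(b',a)$, and since $\varphi$ is quantifier-free and $b'\in N\subseteq\FC$ we obtain $N\models\varphi(b',a)$.

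Combining, $N\models\exists x\,\varphi(x,a)$, whence $M\models\exists x\,\varphi(x,a)$ by $M\preceq N$. The real content is concentrated in the Witt extension step: the bound $m\geq 2(|V(x)|+|V(a)|)$ is used precisely there, to guarantee enough room in the $m$-dimensional non-degenerate $V(N)$ to accommodate an isometric copy of $W$ fixing $V(a)$. The other two ingredients — the base change $M\preceq N$ via model-completeness of $\ACF$, and quantifier elimination in $T_\infty$ — are standard.
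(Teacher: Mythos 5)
Your argument is correct but organized quite differently from the paper's, which is much shorter: given a witness $b\subseteq\FC$ for $\varphi(x,a)$, the paper simply cites an external fact (Fact 3.2 of the referenced bilinear-forms paper) to produce a model of $T_m$ inside $\FC$ containing the tuple $ab$, observes that this submodel satisfies $\varphi(b,a)$ because $\varphi$ is quantifier-free, and concludes by quantifier elimination in $T_m$ that $M$, which carries the same quantifier-free type of $a$, must also satisfy $\exists x\,\varphi(x,a)$. You instead build a concrete elementary extension of $M$ by base-changing $V(M)$ along $K(M)\preceq K(\FC)$ inside $\FC$, move the witness into this extension via a Witt isometry fixing $V(a)$ pointwise, and conclude by elementarity. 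Both proofs are at heart Witt-extension arguments smoothed over by QE in $T_m$; the paper's keeps the witness fixed and builds a small $m$-dimensional model around $ab$, delegating the geometry to a citation, whereas yours keeps (a scalar-extension of) the model fixed and moves the witness, re-deriving the geometry inline --- self-contained at the cost of more machinery. One small point: ``fill up with orthogonal hyperbolic planes'' can leave a one-dimensional parity gap in the symmetric case; over an algebraically closed field one can always also adjoin a non-degenerate line, so the claim still holds, but the phrasing elides that step.
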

\begin{proof}
Suppose $\FC\models \varphi(b,a)$ for some $b\subseteq \FC$. Then, as $m\geq 2|V(ab)|$, there is a model $N\models T_m$ containing $ba$ (see \cite[Fact 3.2]{bilinear}). As $\varphi(x,a)$ is quantifier-free, we have $N\models \varphi(b,a)$. By quantifier elimination in $T_m$ we conclude that $\varphi(x,a)$ is consistent in $M$.
\end{proof}

\begin{proposition}
Suppose $A\subseteq \FC$,  $|A_V|<\omega$, $\varphi(x,b)$ is a (quantifier-free) formula, and $k<\omega$.  Let $M\subseteq \FC$ be a model of $T_{m}$ with $m\geq 2k|V(b)|$ containing $A$ and $b$. Then $\varphi(x,b)$ $k$-divides over $A$ in $T_\infty$ if and only if it $k$-divides over $A$ in $M$.
\end{proposition}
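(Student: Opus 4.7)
The plan exploits quantifier elimination in both $T_\infty$ and $T_m$ in the same two-sorted language, together with the fact that $\varphi$ is quantifier-free: both $A$-indiscernibility of a sequence and $k$-inconsistency of $\{\varphi(x,b_i)\}$ are controlled by quantifier-free data on finite-dimensional subspaces, which is shared between the two theories once the ambient dimension is at least $2k|V(b)|$.

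For the direction $T_\infty\Rightarrow M$, I would invoke the moreover clause of Proposition~\ref{div_fin} to extract an $A$-indiscernible sequence $(b_i)_{i<\omega}$ with $b_0=b$ and $\{\varphi(x,b_i)\}_{i<\omega}$ $k$-inconsistent in $\FC$, all contained in some $N\models T_{2k|V(b)|}$ realised as a substructure of $\FC$. Since $m\geq 2k|V(b)|=\dim N$, a back-and-forth argument based on quantifier elimination and standard properties of non-degenerate bilinear forms produces, after passing to a sufficiently saturated $T_m$-elementary extension $\bar M\succeq M$, an $\CL$-embedding $f:N\hookrightarrow\bar M$ fixing $A\cup\{b\}$. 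By QE in $T_\infty$, $f$ is a partial elementary map and extends to some $\sigma\in\aut(\FC)$ fixing $A\cup\{b\}$. Then $(\sigma(b_i))_{i<\omega}\subseteq\bar M$ is $A$-indiscernible in $\bar M$ by QE in $T_m$, and it retains $k$-inconsistency there: a witness in $\bar M\subseteq\FC$ would, under $\sigma^{-1}$, contradict the original $k$-inconsistency in $\FC$. Hence $\varphi(x,b)$ $k$-divides over $A$ in $M$.

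For the converse, realise an $A$-indiscernible witness of $k$-dividing in a sufficiently saturated $\bar M\succeq M$ of $T_m$, arranged as a substructure of $\FC$ extending $M$ (by enlarging the scalar field inside $K(\FC)$ and taking the $K(\bar M)$-linear span of $V(M)$ inside $V(\FC)$). QE in both theories then forces $A$-indiscernibility to persist in $\FC$. For $k$-inconsistency in $\FC$, a potential witness $a\in\FC$ to $\bigwedge_{j<k}\varphi(x,b_{i_j})$ lies, together with $b_{i_1},\dots,b_{i_k}$, in a finite-dimensional non-degenerate subspace of $\FC$; by QE in $T_\infty$ one exhibits a partial elementary map fixing $b_{i_1},\dots,b_{i_k}$ that pushes $a$ back into $\bar M$, contradicting the $k$-inconsistency of $\{\varphi(x,b_i)\}$ there.

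The main obstacle is the dimensional bookkeeping that justifies both embeddings at the threshold $m\geq 2k|V(b)|$: the forward direction needs $M$ to accommodate the $2k|V(b)|$-dimensional source $N$ fixing $A\cup\{b\}$, while the backward direction relies on being able to reabsorb a hypothetical $|V(x)|$-dimensional witness together with $k$ copies of $b$ inside $\bar M$, which is exactly the regime in which Fact~\ref{cons} identifies the QF theories of $T_m$ and $T_\infty$ on the relevant instances of $\varphi$.
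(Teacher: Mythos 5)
Your plan follows the same skeleton as the paper's proof: the forward direction rests on Proposition~\ref{div_fin} (the ``moreover'' clause producing a witnessing sequence inside a small $T_R$-model) together with quantifier elimination in $T_m$, and the converse rests on Fact~\ref{cons} (the finite-dimensional consistency transfer) together with quantifier elimination. The execution differs in both directions, however. In the forward direction the paper is terse: once the witnessing sequence sits in $N\models T_m$ and $k$-inconsistency is observed to persist in $N$ (by quantifier-freeness of $\varphi$ and $N\subseteq\FC$), it simply invokes QE in $T_m$ to conclude. You instead construct an explicit $\CL$-embedding $N\hookrightarrow\bar M$ and extend it to an automorphism of $\FC$, which is a finer-grained version of the same idea. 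In the converse direction the paper runs a compactness argument on finite subsets $p_0\subseteq\tp(b/A)$ and finite initial segments $(b_i)_{i<l}$, transferring $k$-inconsistency one finite conjunction at a time via Fact~\ref{cons}; you instead realize a single infinite $A$-indiscernible witness inside a saturated $\bar M\preceq\FC$ arranged as a substructure of $\FC$. Your variant is arguably more direct, at the cost of having to justify that $\bar M$ can be realized inside $\FC$ with enough saturation; your sketch (enlarge $K(M)$ inside $K(\FC)$ and take the $K(\bar M)$-span of $V(M)$) is the right idea, though one should check non-degeneracy of the form and elementarity along the way, both of which follow from QE in $T_m$.

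There is one genuine gap in your forward direction. You ask for an embedding $f\colon N\hookrightarrow\bar M$ ``fixing $A\cup\{b\}$,'' but Proposition~\ref{div_fin} only guarantees that the witnessing sequence (hence $b$) lies in the $2k|V(b)|$-dimensional model $N$; it does \emph{not} put $A$ inside $N$. So ``$f$ fixes $A$'' does not typecheck as stated, and without fixing $A$ you cannot immediately conclude that the transported sequence is $A$-indiscernible in $\bar M$. The repair is to fix only $b$ (and $N\cap A$), and then use quantifier elimination in $T_\infty$ and $T_m$ separately to argue that $f\cup\mathrm{id}_A$ is still a partial elementary map (equivalently, that the quantifier-free type of the sequence over $A$ is preserved because $f$ is elementary and $A$ is fixed in $\FC$); alternatively one can route the argument, as the paper implicitly does, through the observation that the relevant quantifier-free type of any finite segment of the sequence together with $A$ and $b$ is realizable in $T_m$ by Fact~\ref{cons}. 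Either way, the needed dimensional bookkeeping is exactly what you flagged as the ``main obstacle,'' so the gap is a missing step rather than a wrong turn.
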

\begin{proof}
Suppose first that $\varphi(x,b)$ $k$-divides over $A$ in $T_\infty$. By Proposition \ref{div_fin}, we can find a sequence $(b_i)_{i<\omega}$ contained in a model  $N\models T_{m}$ witnessing this. Then, as  $\{\varphi(x,b_i):i<\omega\}$ is $k$-inconsistent in $T_\infty$, it is $k$-inconsistent in $N$ as well. Hence, by quantifier elimination in $T_m$, $\varphi(x,b)$ $k$-divides in $M$.

Conversely, suppose $\varphi(x,b)$ $k$-divides in $M$, and consider arbitrary $l<\omega$ and a finite subset $p_0\subseteq \tp(b/A)$. Then there is a sequence $(b_i)_{i<l}$ of tuples in $M$ such that for any $i_1<\dots<i_k$ we have that $\bigwedge_{j<k} \varphi(x,b_{i_j})$ is inconsistent in $M$ and $b_i\models p_0$ for every $i<l$. Then, as $b_{i_0}\dots b_{i_{k-1}}$ contains at most $k|V(b)|$ vectors and $m\geq 2k|V(b)|$, we get by Fact \ref{cons} that $\bigwedge_{j<k} \varphi(x,b_{i_j})$ is inconsistent in $T_\infty$ as well. So, by compactness, $\varphi(x,b)$ $k$-divides in $T_\infty$. 
\end{proof}

\section{Further examples}\label{sec:examples}
In this section, we collect more examples of NSOP$_1$ theories with existence.
In each of them, there exist at least three related notions of independence: $\ind$, $\ind^K$ and $\ind^{K,q}$
(recall that $\ind^{K,q}$ denotes Kim-independence with respect to definition (B) of Kim-dividing).
We would like to better understand what are the relations between these three notions of independence.
Obviously: $\ind\,\Rightarrow\,\ind^K\,\Rightarrow\,\ind^{K,q}$.
The first natural question is: when $\ind^K$ and $\ind^{K,q}$ coincide?

\begin{remark}
Let $T$ be NSOP$_1$ with existence. Then $\ind^K =\ind^{K,q}$ if and only if for every $C$ and every $p(x)\in S(C)$ we have that $p(x)$ extends to a global $C$-invariant type.
The implication ``$\Rightarrow$'' holds for arbitrary $T$.
\end{remark}
\begin{proof}
``$\Leftarrow$'':  We only need to show that $a\ind^{K,q}_C B \Rightarrow a\ind^K_CB$ for arbitrary $a,B,C$. By assumption $\tp(B/C)$ extends to a global $A$-invariant type $r(x)$. If  $a\ind^{K,q}_C$, then there is an $aC$-indiscernible Morley sequence in $r(x)$ over $A$ starting with $C$. As $I$ is in particular $\ind$-Morley over $A$, by Kim's Lemma we get that $a\ind^K_C B$.\\
``$\Rightarrow$'': Suppose there exist $B$ and $C$ such that $\tp(B/C)$ does not extend to a global $C$-invariant type. Then for any $D$ we have that $\tp(BD/C)$ does not extend to a global $C$-invariant type, so, vacuously, $E\ind^{K,q}_C BD$ for any $D,E$.
But we can choose (assuming $T$ has infinite models) $D$ and $E$ such that $E\subseteq \acl(D)\setminus \acl(C)$, in which case $E\nind^K_C BD$, hence $\ind^{K,q}$ does not imply $\ind^K$ in $T$.
\end{proof}

\begin{remark}\label{rem:two.K.ind}
\begin{enumerate}
    \item Note in particular that if there exists $C$ with $\acl(C)\neq \dcl(C)$ then for any $c\in \acl(C)\setminus \dcl(C)$ the type $\tp(c/C)$ does not extend to a global $C$-invariant type, so $\ind^K \neq \ind^{K,q}$.
    
    \item By exactly the same argument as above, if $T$ is NSOP$_1$ with existence, then $\ind^{K,q}$ coincides with $\ind^K$ over algebraically closed sets if and only if for any algebraically closed $C$ any $p(x)\in S(C)$ we have that $p(x)$ extends to a global $C$-invariant type. In particular, if $T=T^{eq}$ is stable then $\ind^{K,q}$ coincides with $\ind^K=\ind$ over algebraically closed sets.
\end{enumerate}
\end{remark}

\begin{example}
We are working in $T_{\infty}$.
As there are sets $C$ with $\dcl(C)\neq\acl(C)$ (for example in the sort $K(\FC)$, on which the induced structure is just that of a pure algebraically closed field), we have that $\ind^K\neq \ind^{K,q}$ in $T_\infty$.
We expect that $\ind^K=\ind^{K,q}$ over algebraically closed sets
which however relies on Question \ref{acl_Granger} below: by the proof of \cite[Proposition 8.1]{bilinear}, any complete type over a finite set $C$ extends to a global type $p(x)$ which is  invariant over $\acl^{eq}(C)$. By compactness, the same is true for arbitrary $C$, as the existence of a global $\acl^{eq}(C)$-invariant extension of $p(x)$ is equivalent to consistency of the type $p(x)\cup \{\varphi(x,d)\leftrightarrow \varphi(x,d'):\varphi(x,y)\in L, d,d'\in \FC^{eq}, d\equiv_{\acl^{eq}(C)}d'\}$.
So, if $\dcl^{eq}(\acl(C))=\acl^{eq}(C)$ then $p(x)$ extends to a global $\acl(C)$ -invariant type in $T^{eq}_\infty$, which restricts to a global $\acl(C)$-invariant type in $T_\infty$.
\end{example}

\begin{question}\label{acl_Granger}
Is it true in $T_\infty$ that for any set $C$ we have $\dcl^{eq}(\acl(C))=\acl^{eq}(C)$?
\end{question}

\begin{example}
Assume that $T$ is the theory of $\omega$-free PAC fields,
let $F^*$ be a monster model of $T$ 
and let $\bar{F}:=(F^*)^{\sep}$ (so $\bar{F}$ is a model of $\scf$ and in this example, if we refer to $\scf$, we actually refer to $\theo({\bar{F}})$).
Assume that $A=\acl(A)$, $B=\acl(B)$, $C=\acl(C)$
(by \cite{ChaPil}, we know that $\acl(\ast)$ is obtained by closing under $\lambda$-functions and then taking the field-theoretic algebraic closure) and $C\subseteq A\cap B$.
By \cite{zoe2002}, we have $A\ind_C B$ if and only if
\begin{itemize}
    \item $A\ind_C^{\scf}B$ (forking independence in $\scf$) and
    \item $\acl(AB)\cap (AB_0)^{\sep}B^{\sep}=\acl(AB_0)B$ for each $B_0=\acl(B_0)\subseteq B$.
\end{itemize}
Let $A'$ be a small subset of $F^*$ and let $B=\acl(B)$. By the general properties of  forking independence (e.g. Remark 5.3 in \cite{casasimpl}), $A'\ind_B B$ holds if and only if $A\ind_B B$, where $A:=\acl(A'B)$.
By the above description of forking independence in the theory of $\omega$-free PAC fields, we have $A'\ind_B B$ (and so also $A'\ind^K_B B$ and $A'\ind^{K,q}_B B$).
So we see that the existence axiom for forking independence holds over algebraically closed sets.
Actually, the existence axiom for forking independence holds over arbitrary sets (cf. Remark 2.15 and Remark 2.16 in \cite{DKR}).

Let us recall what is $\ind^K$ and $\ind^{K,q}$ in the case of $\omega$-free PAC fields.
Up to our knowledge, there is no description of $\ind^K$ or $\ind^{K,q}$ over arbitrary sets, so we need to pick-up a model $F\preceq F^*$. Assume that $A=\acl(A)$ and $B=\acl(B)$, and $F\subseteq A\cap B$. Then, $A\ind^K_F B$ if and only if $A\ind^{K,q}_F B$ (as both notions of Kim-dividing coincide over models in NSOP$_1$), if and only if
\begin{itemize}
    \item $A\ind^{\scf}_F B$ and
    \item $S\mathcal{G}(A)\ind^K_{S\mathcal{G}(F)}S\mathcal{G}(B)$,
\end{itemize}
where (2) is considered in the so-called sorted system,
which is a first order structure build from all the quotients by open normal subgroups of the absolute Galois group
(e.g. see \cite{cherlindriesmacintyre}, \cite{zoeIP}, \cite{HL1}).
In fact, as the absolute Galois group is the free profinite group on $\omega$-many generators, this sorted system is stable (by \cite{zoeIP}) and hence the second dot above can be replaced by ``$S\mathcal{G}(A)\ind_{S\mathcal{G}(F)}S\mathcal{G}(B)$", i.e. the forking independence relation, which is
described in Proposition 4.1 in \cite{zoeIP}.
\end{example}

\begin{example}
Consider the theory ACFG, exposed in \cite{dElbee21} and put it in the place of $T$.
More precisely, fix some prime number $p>0$ and let $L_G$ be the language of rings extended by a symbol for a unary predicate $G$. Models of the $L_G$-theory ACF$_G$ are algebraically closed fields of characteristic $p$ with an additive subgroup under the predicate $G$. 
Now, the $L_G$-theory ACFG is the model companion of ACF$_G$; let $(K,G)$ be its monster model.
For $A\subseteq K$, let $\bar{A}$ denote the field-theoretic algebraic closure of $A$ in $K$.
Take $A,B,C\subseteq K$ and recall that the \emph{weak independence relation}, $A\ind^w_C B$, is given as follows:
\begin{itemize}
    \item $A\ind^{\text{ACF}}_C B$ and
    \item $G(\overline{AC}+\overline{BC})=G(\overline{AC})+G(\overline{BC}),$
\end{itemize}
where $\ind^{\text{ACF}}$ denotes the forking independence relation in ACF.
By Corollary 3.16 from \cite{dElbee21}, we have that $\ind$ is $\ind^w$ after forcing base monotonicity:
$$A\ind_C B\quad\iff\quad (\forall D\subseteq CB)\big(A\ind^w_{CD}BC \big).$$
Therefore ACFG, which is a NSOP$_1$ theory, enjoys the existence axiom for forking independence.
In a private communication with Christian D’Elb\'{e}e, it was suggested to us to use 
Corollary 1.7 from \cite{dElbee21} and Theorem \ref{Kim.lemma.arbitrary}, to show that $\ind^w$ coincides with $\ind^K$ over algebraically closed sets.
As the proof is standard, we omit it here. Therefore $\ind^K$ coincides with $\ind^w$ over algebraically closed subsets and
$\ind^{K,q}$ coincides with $\ind^w$ over models.
In ACFG, there exist sets $C$ such that $\dcl(C)\neq\acl(C)$, thus $\ind^K\neq\ind^{K,q}$ in general (by Remark \ref{rem:two.K.ind}). 
One could ask whether $\ind^K$ and $\ind^{K,q}$ coincide over algebraically closed sets (similarly as in $T_{\infty}$)?
\end{example}

\begin{example}
In \cite{conant_kruckman_2019}, another example of an  NSOP$_1$ theory with existence was studied.
More precisely, $T_{m,n}$ denotes the theory of existentially closed incidence structures omitting the complete incidence structure $K_{m,n}$. In other words, $T_{m,n}$ is the theory of a generic $K_{m,n}$-free bipartite graph.
Theorem 4.11 from \cite{conant_kruckman_2019} gives us that $T_{m,n}$ is NSOP$_1$ and Corollary 4.24 from \cite{conant_kruckman_2019} implies that $T_{m,n}$ satisfies the existence axiom for  forking independence.
Actually, the aforementioned corollary gives us also a nice description of forking independence, thus let us evoke it: for any $A,B,C\subseteq\FC\models T_{m,n}$ we have
$$A\ind_C B\quad\iff\quad A\ind^d_C B\quad\iff\quad (\forall D\subseteq\acl(BC),\, C\subseteq D)(A\ind^I_D B),$$
and $\ind^I$ is a ternary relation coinciding with $\ind^{K,q}$ over models.
Thus, again, forking independence is induced by forcing base monotonicity on a ternary relation related to $\ind^{K,q}$.
\end{example}

The above examples motivate asking the following question:

\begin{question}\label{question1}
Assume that $T$ is NSOP$_1$ with existence. Is it true that $\ind$ is $\ind^K$ after forcing base monotonicity, i.e.:
$$A\ind_C B\quad\iff\quad (\forall D\subseteq\acl(BC),\, C\subseteq D)(A\ind^K_D B)\,?$$
\end{question}

We know that in any theory $T$ (not necessarily an  NSOP$_1$ theory), for each $a$ and $A$ it holds that $a\ind^{K,q}_A A$\footnote{We proved this fact under assumption of NSOP$_1$ with the use of our rank, but after a talk on the topic, Itay Kaplan, in a private communication, shared with us a simpler proof of this fact, which does not need the NSOP$_1$ assumption.}. 
Assuming that Question \ref{question1} has an affirmative answer and assuming that 
$\ind^K$ coincide with $\ind^{K,q}$ over algebraically closed sets, we would obtain the forking existence axiom over algebraically closed sets.

\section{Beyond NSOP$_1$}\label{sec:beyond}
Now, let us consider arbitrary theory $T$ (not necessarily NSOP$_1$).
Instead of Definition \ref{def:Qrank}, one could define the rank by the conditions from Lemma \ref{lemma:technical}. Such a rank would be always finite and satisfy the standard properties (Lemma \ref{lemma:implies}, Lemma \ref{lemma:Qrank.vee}, Lemma \ref{lemma:finite.character}, Corollary \ref{cor:extensions}). This is some strategy, however it seems that we require too much here. Thus, let us derive yet another notion of rank and show its finiteness in an important class of theories.
The new rank is a slight modification of Q-rank and, as we will see in a moment, the new rank coincides with the Q-rank in NSOP$_1$ theories.

\subsection{Refining notions}
We start with introducing a refined notion of an invariant type and a notion of a surrogate of a global type.
These notions are used in Definition \ref{def:Qrank2} and came out from studying examples similar to the one  in Subsection \ref{ssec:exmample}.

\begin{definition}\label{def:new.inv}
Let $A,B$ be small subsets of $\FC$ and let $C\subseteq\FC$.
A type $q(y)\in S(C)$ is $B/A$-invariant if $f(q)=q$ for every $f\in\aut(\FC/A)$ such that $f(B)=B$.
(This implies some restrictions on the set of parameters $C$.)
\end{definition}

\begin{remark}
Let $A\subseteq B$ be small subsets of $\FC$, let $C\subseteq\FC$ and let $q(y)\in S(C)$.
We have:
$q$ is $A$-invariant $\Rightarrow$ $q$ is $B/A$-invariant $\Rightarrow$ $q$ is $B$-invariant.
\end{remark}

\begin{definition}
A type $q(y)$ is a semi-global type over $A$ if
there exists $\pi(y)\in S(A)$ such that $q(y)\in S(A\,\cup\,\pi(\FC))$ and $\pi\subseteq q$.
\end{definition}

\begin{definition}[Morley in semi-global type]
Let $q$ be an $A$-invariant semi-global type over $A$ and let $(I,<)$ be a linearly ordered set.
By a \emph{Morley sequence in $q$ over $A$ over $A$ (of order type $I$)} we understand a sequence
$\bar{B}=(b_i)_{i\in I}$ such that $b_i\models q|_{Ab_{<i}}$.
\end{definition}

\begin{remark}
Assume that $A\subseteq B$ are small subsets of $\FC$,
and $q$ is an $B/A$-invariant semi-global type over $B$ and $(I,<)$ is linearly ordered set.
\begin{enumerate}
    \item 
    For each $f\in\aut(\FC/A)$ with $f(B)=B$,
    we have $f(q)=q\in S(B\,\cup\,\pi(\FC))$ for some $\pi(y)\in S(B)$.
    More precisely, $f(\pi)=\pi\in S(B)$ ($B$ is fixed setwise), so
    $f(B\,\cup\,\pi(\FC))=B\,\cup\,\pi(\FC)$, and so we do not need to add new parameters to the domain of $q$
    (to satisfy $B/A$-invariance of $q$) and being a $B/A$-invariant semi-global type over $B$ is well-defined.
    
    \item
    If $b_I=(b_i)_{i\in I}$ and $c_I=(c_i)_{i\in I}$ are Morley sequences in $q$ over $B$, then $b_I\equiv_B c_I$.
    Thus we see that $b_I$ and $c_I$ realize a common type over $B$, which we denote $q^{\otimes I}|_B$.
    Moreover, $b_I$ is $B$-indiscernible and $b_i\ind^d_B b_{<i}$ for all $i\in I$.
\end{enumerate}
\end{remark}

\begin{lemma}
Assume that that $q$ and $r$ are $A$-invariant semi-global types over $A$, $tp(b/A)=r|_A=q|_A$.
If for a formula $\varphi(x,b)$ there is $b_{<\omega}\models q^{\otimes\omega}|_A$ and $c_{<\omega}\models r^{\otimes\omega}|_A$ such that $\{\varphi(x,b_i)\;|\;i<\omega\}$ is consistent and $\{\varphi(x,c_i)\;|\;i<\omega\}$ is inconsistent, then $T$ has SOP$_1$.
\end{lemma}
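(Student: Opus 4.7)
The plan is to argue by contraposition: assume the hypotheses of the lemma and show that $T$ has SOP$_1$, by building a witnessing tree for $\varphi$ modelled on the construction in the proof of Theorem \ref{Kim.lemma.model} (see \cite{kaplanramsey2017}), adapted to the semi-global setting. First I would extract quantitative data: $\bar{c}$ is $A$-indiscernible (being Morley in the $A$-invariant semi-global type $r$), so inconsistency of $\{\varphi(x,c_i) : i<\omega\}$ refines to $k$-inconsistency for some fixed $k<\omega$; dually, $A$-indiscernibility of $\bar{b}$ together with consistency of $\{\varphi(x,b_i) : i<\omega\}$ gives, by compactness, consistency of $\{\varphi(x,b_i) : i\in I\}$ for any linearly ordered $I$.

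Next I would recursively construct a $k$-ary tree $(a_\eta)_{\eta \in k^{<\omega}}$ inside $\FC$ satisfying: (i) for every $\eta$, the leftmost spine $(a_{\eta\smallfrown\langle 0\rangle^n})_{n<\omega}$ is $A$-conjugate to a Morley sequence in $q$ over $A$ starting with $a_\eta$; and (ii) for every $\eta$, the row of immediate children $(a_{\eta\smallfrown\langle j\rangle})_{j<k}$ is $A$-conjugate to an initial segment of a Morley sequence in $r$ over $A$. The inductive step inserts a new row of children below an existing node by extending the $q$-spine on the $0$-branch and realising a fresh $r$-Morley initial segment across the row, using that $q|_A = r|_A = \tp(b/A)$ and that both types are $A$-invariant semi-global, so the requisite Morley realisations exist over the already-constructed parameters and can be translated via an $A$-automorphism to fit into the tree.

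Verification is then routine: for each branch $\eta\in k^\omega$, condition (i) gives a sequence $A$-conjugate to $\bar{b}$, so $\{\varphi(x,a_{\eta|_n}) : n<\omega\}$ is consistent; while for every node, condition (ii) combined with $k$-inconsistency of $\{\varphi(x,c_i)\}$ ensures $\{\varphi(x, a_{\eta\smallfrown\langle j\rangle}) : j<k\}$ is $k$-inconsistent. This is a tree witness for the $k$-ary variant of SOP$_1$, which is equivalent to SOP$_1$ via a standard combinatorial reduction, so $T$ has SOP$_1$.

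The main technical obstacle is the inductive step of the tree construction: one must simultaneously realise a $q$-Morley extension along the developing spine and an $r$-Morley row of siblings at every new branching node, with the two pieces forced to agree on their common base. This is where the hypothesis $q|_A=r|_A=\tp(b/A)$ is used, and where $A$-invariance (rather than invariance over some larger parameter set) is essential — it allows realising the Morley types over arbitrarily rich domains within the semi-global framework via compactness, and then splicing everything together by $A$-automorphisms into a single coherent tree.
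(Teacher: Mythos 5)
Your approach matches the paper's in essence: both invoke the Kaplan--Ramsey SOP$_1$-tree construction. The paper's entire proof is one sentence --- it observes that, since $\tp(b/A)=q|_A=r|_A$, both semi-global types have domain $A\cup\tp(b/A)(\FC)$, and then says to ``repeat the proof of Proposition 3.15 from \cite{kaplanramsey2017}.'' That observation is the crux: every parameter arising in the tree construction realizes $\tp(b/A)$, hence lies in the common domain of $q$ and $r$, so the Morley extensions one needs at each inductive step exist even though the types are merely semi-global. You allude to this (``realising the Morley types over arbitrarily rich domains within the semi-global framework''), but it should be stated explicitly: that $q,r\in S(A\cup\tp(b/A)(\FC))$ and all tree nodes live in $\tp(b/A)(\FC)$ is precisely what makes the cited argument go through unchanged.

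The tree construction you sketch also has gaps. Conditions (i) and (ii) give consistency of the $\varphi$-instances only along the leftmost spine below each node, not along arbitrary branches, whereas SOP$_1$ requires consistency along every path; to handle paths that bend you need a stronger inductive hypothesis --- for instance, mutual indiscernibility of the rows over the preceding data --- rather than just $A$-conjugacy of each row to an $r$-Morley initial segment. Also, a $k$-ary tree with $k$-inconsistent sibling rows is not the binary SOP$_1$ tree of the paper's definition; your appeal to a ``$k$-ary variant of SOP$_1$'' with a ``standard combinatorial reduction'' would itself need proof, and it is cleaner to follow the cited construction, which produces the binary tree directly. These issues are repairable, but as it stands the sketch is essentially a paraphrase of the same citation the paper gives, with the decisive observation about the common domain of the semi-global types left implicit and the tree bookkeeping incomplete.
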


\begin{proof}
As $tp(b/A)=r|_A=q|_A$, we have that $q,r\in S(A\,\cup\,\tp(b/A)(\FC))$ and we can repeat the proof of Proposition 3.15 from \cite{kaplanramsey2017}.
\end{proof}

\begin{corollary}\label{cor:Kim.lemma.2}
    If $T$ is NSOP$_1$ then we have \emph{Kim's lemma for semi-global types dividing}:
    for any small set $A\subseteq\FC$, any formula $\varphi(x,b)$, if there is an $A$-invariant semi-global type $q\supseteq\tp(b/A)$ over $A$
    and a sequence $b_{<\omega}\models q^{\otimes\omega}|_A$ such that $\{\varphi(x,b_i)\;|\;i<\omega\}$ is inconsistent,
    then $\{\varphi(x,c_i)\;|\;i<\omega\}$ is inconsistent for any $A$-invariant semi-global type $r\supseteq\tp(b/A)$ over $A$ and any sequence $c_{<\omega}\models r^{\otimes\omega}|_A$.
\end{corollary}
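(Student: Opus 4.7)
The plan is to derive this corollary as an immediate contrapositive of the preceding (unlabelled) lemma, once one checks that the hypotheses of the two statements line up symmetrically in $q$ and $r$.

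First I would record two routine observations. Since $q$ is an $A$-invariant semi-global type over $A$ containing $\tp(b/A)$, the restriction $q|_A$ is a complete type over $A$ extending $\tp(b/A)$ and therefore equals $\tp(b/A)$; the same holds for $r$. Hence $q|_A = r|_A = \tp(b/A)$, which is exactly the compatibility condition required in the hypothesis of the preceding lemma. Second, by the remark immediately following the definition of a Morley sequence in a semi-global type (applied with $B=A$), any two Morley sequences in $q$ over $A$ realize the common type $q^{\otimes\omega}|_A$ over $A$; so the (in)consistency of $\{\varphi(x,b_i) : i<\omega\}$ is a function of $q$ alone and does not depend on the particular realising sequence. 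The same remark applies to $r$. This takes care of the quantifier shift from ``for some Morley sequence'' in the hypothesis to ``for any Morley sequence'' in the conclusion.

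Next I would argue by contrapositive. Suppose the conclusion fails: there exist an $A$-invariant semi-global type $r \supseteq \tp(b/A)$ over $A$ and a sequence $c_{<\omega} \models r^{\otimes\omega}|_A$ such that $\{\varphi(x,c_i) : i<\omega\}$ is consistent. By hypothesis we also have an $A$-invariant semi-global type $q \supseteq \tp(b/A)$ over $A$ and a sequence $b_{<\omega} \models q^{\otimes\omega}|_A$ with $\{\varphi(x,b_i) : i<\omega\}$ inconsistent. Now one simply renames $(r,c_{<\omega})$ as the ``$q$-side'' (the consistent side) and $(q,b_{<\omega})$ as the ``$r$-side'' (the inconsistent side) in the statement of the preceding lemma; its hypotheses are then met verbatim, so it concludes that $T$ has SOP$_1$, contradicting our assumption that $T$ is NSOP$_1$.

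There is no real obstacle here: the entire content of the corollary is already encoded in the preceding lemma, and the only thing to verify is the symmetry of the setup and the fact that passing to ``any'' Morley sequence in the conclusion is free because all such sequences realize the same type over $A$.
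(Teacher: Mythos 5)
Your proof is correct and matches the paper's intended (implicit) derivation: the statement is tagged as a corollary of the preceding lemma precisely because it follows by the contradiction argument you give, after observing that $q|_A = r|_A = \tp(b/A)$ and swapping the roles of the consistent and inconsistent sides. The only inessential extra in your write-up is the second "routine observation" about all Morley sequences over $A$ realizing the same type $q^{\otimes\omega}|_A$: while true (and it is indeed the content of the remark you cite), the proof by contradiction already handles the universal quantifier in the conclusion by negating it to an existential, so that observation is not actually used.
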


\begin{proposition}
The following are equivalent for the complete theory $T$:
\begin{enumerate}
    \item $T$ is NSOP$_1$.
    
    \item (Kim's lemma for semi-global types dividing)
    For any small $M\preceq\FC$, any formula $\varphi(x,b)$, if there is an $M$-invariant semi-global type $q\supseteq\tp(b/M)$ over $M$
    and a sequence $b_{<\omega}\models q^{\otimes\omega}|_M$ such that $\{\varphi(x,b_i)\;|\;i<\omega\}$ is inconsistent,
    then $\{\varphi(x,c_i)\;|\;i<\omega\}$ is inconsistent for any $M$-invariant semi-global type $r\supseteq\tp(b/M)$ over $M$ and any sequence $c_{<\omega}\models r^{\otimes\omega}|_M$.
    
    \item (Kim's lemma for Kim-dividing)
    For any small $M\preceq\FC$, any formula $\varphi(x,b)$, if there is an $M$-invariant global type $q\supseteq\tp(b/M)$
    and a sequence $b_{<\omega}\models q^{\otimes\omega}|_M$ such that $\{\varphi(x,b_i)\;|\;i<\omega\}$ is inconsistent,
    then $\{\varphi(x,c_i)\;|\;i<\omega\}$ is inconsistent for any $M$-invariant global type $r\supseteq\tp(b/M)$ and any sequence $c_{<\omega}\models r^{\otimes\omega}|_M$.
\end{enumerate}
\end{proposition}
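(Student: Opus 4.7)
The plan is to close the cycle $(1) \Rightarrow (2) \Rightarrow (3) \Rightarrow (1)$, leveraging the two nontrivial results already established in the excerpt so that the only new content is a routine restriction argument.

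First, the implication $(1) \Rightarrow (2)$ is immediate from Corollary \ref{cor:Kim.lemma.2}, which already asserts Kim's lemma for semi-global types dividing over arbitrary small sets $A$, and in particular over a model $M$. The implication $(3) \Rightarrow (1)$ is exactly the content of Theorem \ref{Kim.lemma.model} (originally Theorem 8.1 of \cite{kaplanramsey2017}). Hence the only substantive step is $(2) \Rightarrow (3)$.

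For $(2) \Rightarrow (3)$, the key observation is that every global $M$-invariant type can be cut down to a canonical semi-global $M$-invariant type in a way that preserves Morley sequences. Concretely, given a global $M$-invariant $q(y) \supseteq \tp(b/M)$, set $\pi := \tp(b/M) \in S(M)$ and define $q' := q|_{M \cup \pi(\FC)}$. Then $q'$ is semi-global over $M$ in the sense of the paper (with witness $\pi$), and $M$-invariance of $q$ passes immediately to $q'$. Moreover, any Morley sequence $b_{<\omega} \models q^{\otimes \omega}|_M$ is automatically a Morley sequence in $q'$ over $M$: the parameter set $M b_{<i}$ is contained in $M \cup \pi(\FC)$ (since each $b_j \models \pi$), so the condition $b_i \models q|_{Mb_{<i}}$ yields $b_i \models q'|_{Mb_{<i}}$, i.e.\ $b_{<\omega} \models (q')^{\otimes \omega}|_M$.

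With this in hand, assume (2) and suppose we are given global $M$-invariant $q, r$ extending $\tp(b/M)$ with $b_{<\omega} \models q^{\otimes \omega}|_M$, $c_{<\omega} \models r^{\otimes \omega}|_M$, and $\{\varphi(x, b_i) : i < \omega\}$ inconsistent. Restricting both $q$ and $r$ to their semi-global counterparts $q', r'$ over $M$ as above, the sequences $b_{<\omega}$ and $c_{<\omega}$ realize $(q')^{\otimes \omega}|_M$ and $(r')^{\otimes \omega}|_M$ respectively. The inconsistency of $\{\varphi(x, b_i)\}$ thus activates the hypothesis of (2), which hands us inconsistency of $\{\varphi(x, c_i) : i < \omega\}$, proving (3).

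No step here is genuinely hard: the substantive work (the SOP$_1$-tree construction forcing failure of Kim's lemma to produce the strict order property of the first kind) is hidden inside Theorem \ref{Kim.lemma.model}, and the semi-global flavor of Kim's lemma is absorbed into Corollary \ref{cor:Kim.lemma.2}. The only new ingredient is the trivial compatibility of Morley sequences with the restriction $q \mapsto q|_{M \cup \pi(\FC)}$, which is immediate from the definitions.
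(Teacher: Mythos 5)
Your proof is correct and follows essentially the same route as the paper: (1)$\Rightarrow$(2) from Corollary \ref{cor:Kim.lemma.2}, (2)$\Rightarrow$(3) by restricting global types to semi-global types over $M\cup\tp(b/M)(\FC)$ and noting that this preserves Morley sequences, and (3)$\Rightarrow$(1) from the Kaplan--Ramsey characterization. The only cosmetic difference is that you point to Theorem \ref{Kim.lemma.model} for (3)$\Rightarrow$(1) where the paper cites Theorem 3.16 of \cite{kaplanramsey2017} directly; both are valid.
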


\begin{proof}
(1)$\Rightarrow$(2) follows by Corollary \ref{cor:Kim.lemma.2}.
If $q$ and $r$ are $M$-invariant global types extending $\tp(b/M)$
then $q|_B$ and $r|_B$, where $B:=M\,\cup\,\tp(b/M)(\FC)$, are $M$-invariant semi-global types over $M$ and thus (3) is implied by (2). Finally, (3)$\Rightarrow$(1) is contained in Theorem 3.16 from \cite{kaplanramsey2017}.
\end{proof}

\begin{remark}
Say that we are interested in the notion of Kim-dividing (over $A$) of a formula $\varphi(x,b)$.
As it was earlier pointed out, the main issue with the definition of Kim-dividing from \cite{kaplanramsey2017}
is that there might be not enough many $A$-invariant global extensions of $\tp(b/A)$ to witness Kim-dividing.
Perhaps there are situations, where it is easier to find an $A$-invariant semi-global extension (over $A$) of $\tp(b/A)$ and so working with semi-global types (instead of global types) in the definition of Kim-dividing from \cite{kaplanramsey2017} might give better results.
\end{remark}

\subsection{Modifying rank}
Again, let 
$$Q:=\big((\varphi_0(x;y_0),q_0(y_0)),\ldots,(\varphi_{n-1}(x;y_{n-1}),q_{n-1}(y_{n-1}) \big),$$
where $\varphi_0,\ldots,\varphi_{n-1}\in\CL$ and $q_0,\ldots,q_{n-1}$ are global types.

\begin{definition}\label{def:Qrank2}
We define a local rank, called \emph{\~{Q}-rank}, 
$$\tilde{D}_Q(\,\cdot\,):\big\{\text{sets of formulae}\}\to\Ord\cup\{\infty\}.$$
For any set of $\mathcal{L}$-formulae $\pi(x)$
we have $\tilde{D}_Q(\pi(x))\geqslant\lambda$ if and only if there exists 
$\eta\in n^{\lambda}$ and 
$\big(b^{\alpha}_{\Zz}=(b^{\alpha}_i)_{i\in\Zz},M^{\alpha}\big)_{\alpha<\lambda}$ such that
\begin{enumerate}
    \item $\dom(\pi(x))\subseteq M^0$,
    \item $q_0,\ldots,q_{n-1}$ are $M^0$-invariant,
    \item $M^{\alpha}\preceq\FC$ for each $\alpha<\lambda$, $(M^{\alpha})_{\alpha<\lambda}$ is continuous, and each $M^{\alpha+1}$ is $|M^{\alpha}|^+$-saturated and strongly $|M^{\alpha}|^+$-homogeneous,
    \item $b^{\alpha}_{\Zz}M^{\alpha}\subseteq M^{\alpha+1}$ for each $\alpha+1<\lambda$,
    \item $b^{\alpha}_0\models q_{\eta(\alpha)}|_{M^{\alpha}}$ for each $\alpha<\lambda$,    
    \item $\pi(x)\cup\{\varphi_{\eta(\alpha)}(x;b^{\alpha}_0)\;|\;\alpha<\lambda\}$ is consistent,

    \item for each $\alpha<\lambda$ there exists an $M^{\alpha}/M^0$-invariant semi-global type $r_{\alpha}(y_{\eta(\alpha)})$
    over $M^{\alpha}$ such that 
    $b^{\alpha}_{\Zz}\models r_{\alpha}^{\otimes\Zz}|_{M^{\alpha}}$ and $\{\varphi_{\eta(\alpha)}(x;b^{\alpha}_i)\;|\;i\in\Zz\}$ is inconsistent.
\end{enumerate}
If  $\tilde{D}_Q(\pi)\geqslant\lambda$ for each $\lambda\in\Ord$,
then we set $\tilde{D}_Q(\pi)=\infty$. 
Otherwise $\tilde{D}_Q(\pi)$ is the maximal $\lambda\in\Ord$ such that $\tilde{D}_Q(\pi)\geqslant\lambda$.
\end{definition}

\begin{proposition}
If $T$ is NSOP$_1$ then $D_Q=\tilde{D}_Q$.
\end{proposition}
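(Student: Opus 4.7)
The plan is to prove both inequalities $D_Q(\pi) \leqslant \tilde{D}_Q(\pi)$ and $\tilde{D}_Q(\pi) \leqslant D_Q(\pi)$ by moving freely between global and semi-global invariant extensions. This flexibility is provided by Kim's lemma for Kim-dividing (Fact 2.7) and its semi-global variant (Proposition 5.6), both of which are available precisely because $T$ is NSOP$_1$. The key conceptual point is that the fixed global types $q_0, \ldots, q_{n-1}$ appearing in $Q$ are $M^0$-invariant and hence already carry the stronger invariance data demanded by Definition 5.7.

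First I would handle the direction $\tilde{D}_Q(\pi) \geqslant \lambda \Rightarrow D_Q(\pi) \geqslant \lambda$. Given witnesses $\eta$ and $(b^\alpha_{\Zz}, M^\alpha)_{\alpha<\lambda}$ for $\tilde{D}_Q$, observe that since $M^0 \subseteq M^\alpha$ we have $\aut(\FC/M^\alpha) \subseteq \{f \in \aut(\FC/M^0) : f(M^\alpha) = M^\alpha\}$, so any $M^\alpha/M^0$-invariant type is in particular $M^\alpha$-invariant. Hence the semi-global $r_\alpha$ from condition (7) of Definition 5.7 is $M^\alpha$-invariant, its $\omega$-indexed sub-Morley sequence $(b^\alpha_i)_{i<\omega}$ still lives in $r_\alpha$ and witnesses inconsistency of $\{\varphi_{\eta(\alpha)}(x; b^\alpha_i) : i<\omega\}$. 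Proposition 5.6 then yields that $\varphi_{\eta(\alpha)}(x; b^\alpha_0)$ Kim-divides over $M^\alpha$ in sense (B). Setting $b^\alpha := b^\alpha_0$ and keeping the same $\eta$ and $(M^\alpha)$, all conditions of Definition 3.1 (in the equivalent form with (3$^{\ast\ast}$) from Remark 3.3) are satisfied.

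For the reverse direction $D_Q(\pi) \geqslant \lambda \Rightarrow \tilde{D}_Q(\pi) \geqslant \lambda$, I would invoke Remark 3.3 to start from witnesses for $D_Q$ already satisfying (3$^{\ast\ast}$). The essential observation is that condition (5) in Definition 3.1 reads $b^\alpha \models q_{\eta(\alpha)}|_{M^\alpha}$, so $q_{\eta(\alpha)}$ itself is a global $M^0$-invariant extension of $\pi^\alpha := \tp(b^\alpha/M^\alpha)$. Set $r_\alpha := q_{\eta(\alpha)}|_{M^\alpha \cup \pi^\alpha(\FC)}$. A short verification shows $r_\alpha$ is $M^\alpha/M^0$-invariant: for $f \in \aut(\FC/M^0)$ with $f(M^\alpha) = M^\alpha$ we get $f(\pi^\alpha) = f(q_{\eta(\alpha)})|_{f(M^\alpha)} = q_{\eta(\alpha)}|_{M^\alpha} = \pi^\alpha$, so $f$ fixes the domain of $r_\alpha$ setwise and fixes $q_{\eta(\alpha)}$, hence $f(r_\alpha) = r_\alpha$. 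Build a $\Zz$-indexed Morley sequence $(b^\alpha_i)_{i \in \Zz}$ in $q_{\eta(\alpha)}$ over $M^\alpha$ with $b^\alpha_0 = b^\alpha$; this is automatically Morley in the semi-global restriction $r_\alpha$. Using the saturation and strong homogeneity of $M^{\alpha+1}$ provided by (3$^{\ast\ast}$), relocate this sequence inside $M^{\alpha+1}$ to satisfy condition (4) of Definition 5.7. Finally, Kim's lemma (Fact 2.7) applied to the $D_Q$-witness tells us that the Morley sequence in the specific $M^\alpha$-invariant global type $q_{\eta(\alpha)}$ starting at $b^\alpha$ witnesses inconsistency for indices $<\omega$, and $M^\alpha$-indiscernibility of the full $\Zz$-indexed sequence upgrades this to inconsistency over $\Zz$.

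The main subtlety is the second direction, specifically the verification that $r_\alpha := q_{\eta(\alpha)}|_{M^\alpha \cup \pi^\alpha(\FC)}$ really is $M^\alpha/M^0$-invariant; once one spots that the ambient fixed types $q_{\eta(\alpha)}$ already carry $M^0$-invariance, both directions collapse to routine invocations of Kim's lemma in its two incarnations, so no deeper argument is needed.
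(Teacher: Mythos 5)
Your proposal is correct and follows essentially the same route as the paper's own argument: in both directions the bridging object is the fixed global $M^0$-invariant type $q_{\eta(\alpha)}$, whose restriction to $M^\alpha\cup\tp(b^\alpha/M^\alpha)(\FC)$ yields an $M^\alpha/M^0$-invariant semi-global type, and the two Kim's lemmas (the global one over models and the semi-global one) are used exactly as in the paper to transfer the inconsistency witness from one kind of invariant extension to the other. The only cosmetic difference is that where you invoke Kim's lemma directly to replace the arbitrary witnessing global type $r_\alpha$ from condition (7) of Definition~\ref{def:Qrank} by $q_{\eta(\alpha)}$ and then use saturation of $M^{\alpha+1}$ to relocate the sequence, the paper packages that step as an appeal to Lemma~\ref{lemma:Qrank.NSOP1}; the content is the same.
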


\begin{proof}
If $D_Q(\pi)\geqslant\lambda$ then 
there exist $\eta\in n^{\lambda}$ and $(b^{\alpha},M^{\alpha})_{\alpha<\lambda}$ as in Definition \ref{def:Qrank}.
By Remark \ref{rem:cond.3}, we may assume that each $M^{\alpha+1}$ is $|M^{\alpha}|^+$-saturated and strongly $|M^{\alpha}|^+$-homogeneous. Then, as in Lemma \ref{lemma:Qrank.NSOP1}, we may assume that each $r_{\alpha}\in S(\FC)$ from the condition (7) of Definition \ref{def:Qrank} is equal to $q_{\eta(\alpha)}$ and that we have $b^{\alpha}_{<\omega}\subseteq M^{\alpha+1}$ satisfying $q_{\eta(\alpha)}^{\otimes\omega}|_{M^{\alpha}}$
and such that $\{\varphi_{\eta(\alpha)}(x,b^{\alpha}_i)\;|\;i<\omega\}$ is inconsistent.
By saturation of $M^{\alpha+1}$, we can extend each $b^{\alpha}_{<\omega}$ to $b^{\alpha}_{\Zz}\models q_{\eta(\alpha)}^{\otimes\Zz}|_{M^{\alpha}}$ contained in $M^{\alpha+1}$ such that
$\{\varphi_{\eta(\alpha)}(x,b^{\alpha}_i)\;|\;i\in\Zz\}$ is inconsistent. 
Finally, $q_{\eta(\alpha)}|_{B^{\alpha}}$, where $B^{\alpha}:=M^{\alpha}\,\cup\,q_{\eta(\alpha)}|_{M^{\alpha}}(\FC)$,
is an $M^{\alpha}/M^0$-invariant semi-global type over $M^{\alpha}$. Therefore we fulfill all the conditions of Definition \ref{def:Qrank2} and we see that $\tilde{D}_Q(\pi)\geqslant\lambda$.

Now, assume that $\tilde{D}_Q(\pi)\geqslant\lambda$. 
There exists $\eta\in n^{\lambda}$ and 
$\big(b^{\alpha}_{\Zz}=(b^{\alpha}_i)_{i\in\Zz},M^{\alpha}\big)_{\alpha<\lambda}$ as in Definition \ref{def:Qrank2}.
All the first six conditions from Definition \ref{def:Qrank} are naturally satisfied by $\eta$, $M^{\alpha}$ and $b^{\alpha}_{<\omega}$, where $\alpha<\lambda$.
To see that also the condition (7) of Definition \ref{def:Qrank} is satisfied, we argue as follows.
Let $\alpha<\lambda$, let $c^{\alpha}_{<\omega}\models q_{\eta(\alpha)}^{\otimes\omega}|_{M^{\alpha}}$ be contained in $M^{\alpha+1}$ and such that $c^{\alpha}_0=b^{\alpha}_0$.
We know that $q_{\eta(\alpha)}$ is $M^0$-invariant, so in particular also $M^{\alpha}$-invariant.
Moreover, as 
 $q_{\eta(\alpha)}|_{B^{\alpha}}$, where $B^{\alpha}:=M^{\alpha}\,\cup\,q_{\eta(\alpha)}|_{M^{\alpha}}(\FC)$,
is an $M^{\alpha}$-invariant semi-global type over $M^{\alpha}$, and 
there exists an $M^{\alpha}/M^0$-invariant semi-global type $r_{\alpha}(y_{\eta(\alpha)})$
    over $M^{\alpha}$ such that 
    $b^{\alpha}_{\Zz}\models r_{\alpha}^{\otimes\Zz}|_{M^{\alpha}}$ and we have that $\{\varphi_{\eta(\alpha)}(x;b^{\alpha}_i)\;|\;i\in\Zz\}$ is inconsistent (by the condition (7) of Definition \ref{def:Qrank2}),
Corollary \ref{cor:Kim.lemma.2} implies that 
$\{\varphi_{\eta(\alpha)}(x;c^{\alpha}_i)\;|\;i\in\Zz\}$ is inconsistent.
\end{proof}

\begin{corollary}
    If $T$ is NSOP$_1$ then $\tilde{D}_Q(\pi)<\omega$ for any choice of $\pi$ and finite $Q$.
\end{corollary}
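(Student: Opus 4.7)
The plan is essentially a one-line observation: chain together the two results that precede the corollary. First I would invoke the proposition just proved, which gives the identity $D_Q = \tilde{D}_Q$ under the assumption that $T$ is NSOP$_1$. Then I would apply Corollary \ref{cor:DQ.finite}, which provides the bound $D_Q(\pi) \leqslant \sum_{j \in J} C_{\varphi_j, q_j} < \omega$, valid whenever $T$ is NSOP$_1$ and $Q$ is finite. Putting these together yields $\tilde{D}_Q(\pi) = D_Q(\pi) < \omega$.

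Since both inputs are already established in the excerpt, there is no real obstacle here; the corollary is just a packaging of the preceding proposition together with the finiteness corollary. The only thing worth double-checking is that the hypotheses of Corollary \ref{cor:DQ.finite} match: we need NSOP$_1$ (assumed) and finiteness of $Q$ (assumed), and $\pi$ is arbitrary. No further argument is required, and no auxiliary lemmas need to be invoked.
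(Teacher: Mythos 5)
Your proposal is correct and is exactly the intended argument: the corollary follows immediately by combining the preceding proposition ($D_Q=\tilde{D}_Q$ in NSOP$_1$) with Corollary \ref{cor:DQ.finite}. The paper leaves the proof implicit for precisely this reason.
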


The following easily follows by the definition.

\begin{fact}\label{fact:2}
\begin{enumerate}
\item 
$\tilde{D}_Q(\pi))\geqslant\lambda$, $f\in\aut(\FC)$ $\Rightarrow$ $\tilde{D}_{f(Q)}(f(\pi))\geqslant\lambda$.


\item 
If $\dom(\pi')\subseteq\dom(\pi)$ and $\pi\vdash\pi'$ then $\tilde{D}_Q(\pi)\leqslant\tilde{D}_Q(\pi')$.

\item
$\tilde{D}_Q(\pi)\leqslant\sum\limits_{j<n}\tilde{D}_{((\varphi_j,q_j))}(\pi)$.

\item
We have
$$\tilde{D}_Q(\pi\,\cup\,\{\bigvee\limits_{j\leqslant m}\psi_j\})\leqslant\max\limits_{j\leqslant m}\tilde{D}_Q(\pi\,\cup\,\{\psi_j\}).$$

\end{enumerate}
\end{fact}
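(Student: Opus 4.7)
All four clauses are verified directly from Definition~\ref{def:Qrank2} by unpacking the data of a witness and applying only transparent operations; this is the sense of the author's ``easily follows'' claim.

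For (1), given witnesses $\eta$, $(b^{\alpha}_{\Zz}, M^{\alpha})_{\alpha<\lambda}$, $(r_{\alpha})_{\alpha<\lambda}$ for $\tilde{D}_Q(\pi)\geq\lambda$, I push forward by $f$: the data $f\circ\eta$, $(f(b^{\alpha}_{\Zz}), f(M^{\alpha}))_{\alpha<\lambda}$, $(f(r_{\alpha}))_{\alpha<\lambda}$, together with $f(Q)=((f(\varphi_j), f(q_j)))_{j<n}$, witness $\tilde{D}_{f(Q)}(f(\pi))\geq\lambda$. Every clause transfers since automorphisms preserve elementarity, continuity, saturation, homogeneity, type realization, and (in)consistency. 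The invariance clauses (2) and (7) transfer by the conjugation principle: any $h\in\aut(\FC/f(M^0))$ with $h(f(M^\alpha))=f(M^\alpha)$ corresponds to $g:=f^{-1}hf\in\aut(\FC/M^0)$ with $g(M^\alpha)=M^\alpha$, so $h(f(r_\alpha))=f(g(r_\alpha))=f(r_\alpha)$.

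Item (2) is immediate: only clauses (1) and (6) of Definition~\ref{def:Qrank2} involve $\pi$; clause (1) is preserved since $\dom(\pi')\subseteq\dom(\pi)\subseteq M^0$, and clause (6) follows from $\pi\vdash\pi'$. For (4), suppose $\tilde{D}_Q(\pi\cup\{\bigvee_{j\leq m}\psi_j\})\geq\lambda$ via witnesses as above, and pick a realization $c$ of $\pi(x)\cup\{\bigvee_j\psi_j(x)\}\cup\{\varphi_{\eta(\alpha)}(x,b^\alpha_0):\alpha<\lambda\}$. Then $c\models\psi_{j_0}$ for some $j_0\leq m$, and the very same witnesses testify $\tilde{D}_Q(\pi\cup\{\psi_{j_0}\})\geq\lambda$, since every other clause of the definition is insensitive to which disjunct we adjoin to $\pi$.

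For (3), I apply pigeonhole on $\eta:\lambda\to n$. Let $\mu_j$ denote the order type of $\eta^{-1}[j]\subseteq\lambda$, so $\lambda\leq\sum_{j<n}\mu_j$ as ordinals; it suffices to exhibit, for each $j<n$ with $\mu_j>0$, a witness of $\tilde{D}_{((\varphi_j,q_j))}(\pi)\geq\mu_j$. Let $\sigma_j:\mu_j\to\eta^{-1}[j]$ be the order isomorphism. I take the subsequence $(b^{\sigma_j(\beta)}_{\Zz})_{\beta<\mu_j}$ with the semi-global types $r_{\sigma_j(\beta)}$, and define $N^0:=M^{\sigma_j(0)}$, $N^{\beta+1}:=M^{\sigma_j(\beta+1)}$, and $N^\beta:=\bigcup_{\gamma<\beta}N^\gamma$ at limit $\beta$. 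This yields a continuous elementary chain; at successors, $N^{\beta+1}$ contains $N^\beta\cup b^{\sigma_j(\beta)}_{\Zz}$ and retains the saturation/homogeneity it enjoyed in the original chain (since $|N^\beta|$ is bounded by $|M^{\sigma_j(\beta)}|$). Each type $r_{\sigma_j(\beta)}$ remains $N^{\sigma_j(\beta)}/N^0$-invariant because $M^0\subseteq N^0$ shrinks the relevant stabilizer. The constant function $\beta\mapsto j$ on $\mu_j$ together with these data gives the required witness, and summing over $j$ yields (3). The only nonroutine point is the limit-stage bookkeeping for the chain $(N^\beta)$, and this is the step I would check most carefully.
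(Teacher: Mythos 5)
Items (1), (2) and (4) are correct and read as the natural unpacking of Definition~\ref{def:Qrank2}. Two small slips in (1): the witnessing function should remain $\eta$ (it picks indices $j<n$, so $f$ does not act on it), and since the $\varphi_j$ are parameter-free $\CL$-formulae, $f(\varphi_j)=\varphi_j$; your conjugation argument for clause~(7) is the right way to handle $B/A$-invariance. In (2) you correctly isolate that only clauses (1) and (6) of the definition mention $\pi$ and that the stated hypothesis $\dom(\pi')\subseteq\dom(\pi)$ is exactly what lets the same chain be reused (without it one would need the NSOP$_1$ machinery of Lemma~\ref{lemma:technical} to change $M^0$, which is not available here). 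Item~(4) is the same observation applied to a realization of $\pi\cup\{\bigvee_j\psi_j\}$.

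The real concern is item~(3), and you flag the right place but the resolutions you offer do not actually close it. Your pigeonhole re-indexing is sound when $\lambda$ is finite, since then $N^\beta=M^{\sigma_j(\beta)}$ for every $\beta$, all indices $\sigma_j(\beta+1)$ are successor ordinals, and both the saturation and the invariance clauses transfer verbatim; this is presumably the intended scope, as the section's goal is a finiteness bound and $\sum_{j<n}$ is unambiguous only for finite values. For transfinite $\lambda$, however, two things break at (or just past) limit stages. First, if $\sigma_j(\beta+1)$ is a limit ordinal in $\lambda$, then $M^{\sigma_j(\beta+1)}$ carries \emph{no} saturation or homogeneity guarantee in Definition~\ref{def:Qrank2} (clause (3) only constrains the $M^{\alpha+1}$), so $N^{\beta+1}:=M^{\sigma_j(\beta+1)}$ need not be $|N^{\beta}|^+$-saturated; ``retains the saturation it enjoyed'' is vacuous here. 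Second, and more seriously, when $\beta$ is a limit you may have $N^\beta=\bigcup_{\gamma<\beta}N^\gamma\subsetneq M^{\sigma_j(\beta)}$ (e.g.\ $\lambda=\omega{\cdot}2{+}1$ and $\eta^{-1}[j]=\omega\cup\{\omega{\cdot}2\}$). Then $M^{\sigma_j(\beta)}/M^0$-invariance of $r_{\sigma_j(\beta)}$ does \emph{not} yield $N^\beta/N^0$-invariance: the hypothesis $M^0\subseteq N^0$ shrinks the pointwise-fixing half of the stabiliser, but the setwise-fixing half changes from $\{f:f(M^{\sigma_j(\beta)})=M^{\sigma_j(\beta)}\}$ to $\{f:f(N^\beta)=N^\beta\}$, and an automorphism fixing the smaller $N^\beta$ setwise need not fix $M^{\sigma_j(\beta)}$ setwise at all. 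So ``shrinks the relevant stabiliser'' is wrong for this half, and clause~(7) is not verified for the re-indexed chain at limits. You should either restrict (3) to the finite case explicitly, or supply a genuine construction of an $N^\beta/N^0$-invariant semi-global witness at limit $\beta$ — simply restricting $r_{\sigma_j(\beta)}$ will not do.
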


\begin{question}
Working outside of NSOP$_1$, can we improve statements (2) and (4) in Fact \ref{fact:2}
so they will get theses as in Lemma \ref{lemma:implies} and Lemma \ref{lemma:Qrank.vee}?
\end{question}

\begin{proposition}\label{prop:tilde.rank}
If $\tilde{D}_Q(\pi(x))\geqslant\lambda$ then there exist
natural numbers $l_{\alpha}$, formulae $\psi_{\alpha}(x,y_{\alpha})$ and sequences $c^{\alpha}_{\Zz}:=(c^{\alpha}_i)_{i\in\Zz}$, where $\alpha<\lambda$ such that
\begin{itemize}
    \item for each $\sigma:\lambda\to\Zz$ the set
    $$\pi(x)\,\cup\,\bigcup\limits_{\alpha<\lambda}\{\psi_{\alpha}(x,c^{\alpha}_{\sigma(\alpha)}),\,\neg\psi_{\alpha}(x,c^{\alpha}_l)\;|\;l\neq\sigma(\alpha)\}$$
    is consistent,
    
    \item for each $\alpha<\lambda$ the set
    $$\{\psi_{\alpha}(x,c^{\alpha}_l)\;|\;l\in\Zz\}$$
    is $l_{\alpha}$-inconsistent.
\end{itemize}
\end{proposition}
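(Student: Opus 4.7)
The plan is to extract from the witnesses of $\tilde{D}_Q(\pi)\ge\lambda$ given by Definition \ref{def:Qrank2} -- namely $\eta\in n^\lambda$, the continuous tower of strongly homogeneous models $(M^\alpha)_{\alpha<\lambda}$, the Morley sequences $b^\alpha_\Zz\models r_\alpha^{\otimes\Zz}|_{M^\alpha}$ and the $M^\alpha/M^0$-invariant semi-global types $r_\alpha$ -- the required inp-array data. First I would set $\psi_\alpha:=\varphi_{\eta(\alpha)}$, $c^\alpha_i:=b^\alpha_i$ and let $l_\alpha<\omega$ be the inconsistency degree of $\{\varphi_{\eta(\alpha)}(x,b^\alpha_i)\mid i\in\Zz\}$, which exists by compactness since that set is inconsistent by item (7) of Definition \ref{def:Qrank2}. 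The second bullet of the proposition is then immediate.

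For the first bullet, the crux is the type equality $(b^\alpha_{\sigma(\alpha)})_{\alpha<\lambda}\equiv_{M^0}(b^\alpha_0)_{\alpha<\lambda}$ for every $\sigma:\lambda\to\Zz$; applying an automorphism witnessing it to a realization of $\pi\cup\{\varphi_{\eta(\alpha)}(x,b^\alpha_0)\mid\alpha<\lambda\}$ (consistent by (6) of Definition \ref{def:Qrank2}) and compactness would then yield consistency of the positive part of the desired set. I would prove this equality by induction on the size of a finite $F=\{\alpha_0<\dots<\alpha_n\}\subseteq\lambda$, strengthening the inductive hypothesis to produce an $f_F\in\aut(\FC/M^0)$ with $f_F(b^{\alpha_j}_0)=b^{\alpha_j}_{\sigma(\alpha_j)}$ for $j\le n$ that moreover setwise preserves every $M^\beta$ with $\beta\ge\alpha_n+1$. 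The inductive step uses: (a) $M^{\alpha_i}$-indiscernibility of $b^{\alpha_i}_\Zz$, giving $b^{\alpha_i}_0\equiv_{M^{\alpha_i}}b^{\alpha_i}_{\sigma(\alpha_i)}$; (b) $M^{\alpha_i}/M^0$-invariance of $r_{\alpha_i}$, which combined with setwise preservation of $M^{\alpha_i}$ by the previously built automorphism guarantees that after its application $b^{\alpha_i}_0$ still realises $r_{\alpha_i}|_{M^{\alpha_i}}$; and (c) strong $|M^{\alpha_i}|^+$-homogeneity of $M^{\alpha_i+1}$ to realise this type-equality inside $M^{\alpha_i+1}$, followed by the standard iteration of strong homogeneity up the tower to extend it to an automorphism of $\FC$ preserving every $M^\beta$ with $\beta\ge\alpha_i+1$. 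These semi-global invariance properties play the role of the transitivity of $\ind^K$ used in the NSOP$_1$ setting.

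The main obstacle will be producing the exclusivity conjuncts $\neg\psi_\alpha(x,c^\alpha_l)$ for $l\ne\sigma(\alpha)$, since the naive choice $\psi_\alpha=\varphi_{\eta(\alpha)}$ does not directly yield them: a realisation of the positive part may well satisfy $\varphi_{\eta(\alpha)}$ at up to $l_\alpha-1$ indices of its row. I plan to resolve this by thickening the columns: redefine $c^\alpha_i:=(b^\alpha_{i-N_\alpha},\dots,b^\alpha_{i+N_\alpha})$ with $N_\alpha\ge l_\alpha$, and $\psi_\alpha(x,\bar y):=\varphi_{\eta(\alpha)}(x,y_0)\wedge\bigwedge_{0<|j|\le N_\alpha}\neg\varphi_{\eta(\alpha)}(x,y_j)$. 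The $l_\alpha$-inconsistency then forces $\psi_\alpha(x,c^\alpha_l)$ to hold only at ``$N_\alpha$-isolated'' points of the footprint $\{l:\models\varphi_{\eta(\alpha)}(d,b^\alpha_l)\}$, and a Ramsey-type extraction applied to the indiscernible sequence of columns lets us choose a realisation whose footprint on each row is either a singleton at $\sigma(\alpha)$ or a tight cluster avoiding $\sigma(\alpha)$, so that $\sigma(\alpha)$ is the unique isolated point in its row, delivering the required strict inp-pattern.
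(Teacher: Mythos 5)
Your reduction to the tower data of Definition \ref{def:Qrank2}, the extraction of $l_\alpha$ by compactness, and the automorphism argument for consistency of the \emph{positive} part (climbing the tower one level at a time, using $M^\alpha/M^0$-invariance of the semi-global types $r_\alpha$ to keep $b^\alpha_\Zz$ realising the right Morley type and strong homogeneity to realise the resulting type equalities inside the next model) are all in the spirit of what the paper does in its Claim 2. The genuine gap is in the exclusivity step, and the thickening you propose is the wrong one. With $\psi_\alpha(x,\bar y):=\varphi_{\eta(\alpha)}(x,y_0)\wedge\bigwedge_{0<|j|\leq N_\alpha}\neg\varphi_{\eta(\alpha)}(x,y_j)$, the formula $\psi_\alpha(d,c^\alpha_l)$ certifies that $l$ is \emph{isolated} in the footprint $F_\alpha:=\{i:\models\varphi_{\eta(\alpha)}(d,b^\alpha_i)\}$. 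But $F_\alpha$ can contain up to $l_\alpha-1$ indices at pairwise distance $>N_\alpha$, in which case several of them are isolated and $\neg\psi_\alpha(d,c^\alpha_l)$ fails for some $l\neq\sigma(\alpha)$. Your proposed dichotomy ``singleton at $\sigma(\alpha)$ or a tight cluster avoiding $\sigma(\alpha)$'' is also self-defeating: in the second alternative $\sigma(\alpha)\notin F_\alpha$, so the required positive conjunct $\psi_\alpha(x,c^\alpha_{\sigma(\alpha)})$ is not satisfied. Finally, a ``Ramsey-type extraction'' of a subsequence of $b^\alpha_\Zz$ cannot rescue this: it would destroy the $\Zz$-indexed shift structure on which your automorphism argument for varying $\sigma$ depends, and it would make the witnessing sequences $c^\alpha_\Zz$ depend on $\sigma$, whereas the statement requires a single family of $c^\alpha_\Zz$'s that works for \emph{all} $\sigma:\lambda\to\Zz$ simultaneously.

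The ingredient you are missing, and the technical heart of the paper's proof (its Claim 1), is a \emph{maximality} argument: recursively choose $k_\alpha<\omega$ maximal so that $\pi(x)$ together with the positives $\{\varphi_{\eta(\beta)}(x,b^\beta_l):0\leq l<k_\beta\}$ and negatives elsewhere for $\beta<\alpha$, the positives $\{\varphi_{\eta(\alpha)}(x,b^\alpha_l):0\leq l<k_\alpha\}$, and $\{\varphi_{\eta(\gamma)}(x,b^\gamma_0):\gamma>\alpha\}$ is consistent. The same tower-climbing automorphism you use for the positive part then shows that if a realisation of this set had an extra positive at some $s<0$ or $s\geq k_\alpha$, one could shift it back to contradict maximality of $k_\alpha$; hence there is a realisation whose footprint on row $\alpha$ is \emph{exactly} the contiguous block $\{0,\ldots,k_\alpha-1\}$. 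With this control in hand, the right thickening is a \emph{conjunction of positives over a block}: set $c^\alpha_n:=(b^\alpha_{k_\alpha n},\ldots,b^\alpha_{k_\alpha n+k_\alpha-1})$ and $\psi_\alpha(x,c^\alpha_n):=\bigwedge_{0\leq j<k_\alpha}\varphi_{\eta(\alpha)}(x,b^\alpha_{k_\alpha n+j})$. Then exactly one block (namely $n=0$) is fully contained in the footprint, which delivers the negatives for free; and an arbitrary $\sigma$ is handled by shifting each row by a multiple of $k_\alpha$ via the same automorphism machinery, since such a shift preserves the block structure. Note that your isolation formula would fail here even after the maximality step whenever $k_\alpha>1$: with footprint $\{0,\ldots,k_\alpha-1\}$, no index is $N_\alpha$-isolated, so $\psi_\alpha(d,c^\alpha_0)$ would be false. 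This is why the block-conjunction shape of $\psi_\alpha$ is not an implementation detail but the crux of the proof.
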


\begin{proof}
Consider $((b^{\alpha}_{\Zz}),M^{\alpha})_{\alpha<\lambda}$ as in Definition \ref{def:Qrank2}.
For simpler notation, let us introduce small $M^{\lambda}\preceq\FC$ which contains all $M^{\alpha}$'s, all $b^{\alpha}_{\Zz}$'s and is $|\bigcup_{\alpha}M^{\alpha}|^+$-saturated and strongly $|\bigcup_{\alpha}M^{\alpha}|^+$-homogeneous.
We know that $\pi(x)\,\cup\,\{\varphi_{\eta(\alpha)}(x,b^{\alpha}_0)\;|\;\alpha<\lambda\}$ is consistent.
\ 
\\
\\
\textbf{Claim 1:}
There are natural numbers $(k_{\alpha})_{\alpha<\lambda}$ such that
$$\pi(x)\,\cup\,\bigcup\limits_{\alpha<\lambda}\Big(\{\varphi_{\eta(\alpha)}(x,b^{\alpha}_l)\;|\;0\leqslant l<k_{\alpha}\}\,\cup\,\{\neg\varphi_{\eta(\alpha)}(x,b^{\alpha}_l)\;|\;l<0\text{ or }l\geqslant k_{\alpha}\}\Big)$$
is consistent.
\ 
\\
\textit{Proof of Claim 1:} We will choose $k_{\alpha}$'s recursively.
Let $k_0<\omega$ be maximal such that
$$\pi(x)\,\cup\,\{\varphi_{\eta(0)}(x,b^{0}_l)\;|\;0\leqslant l<k_0\}\,\cup\,\{\varphi_{\eta(\alpha)}(x,b^{\alpha}_0)\;|\;0<\alpha<\lambda\}$$
is consistent. Then also
\begin{IEEEeqnarray*}{rCl}
\pi(x) & \cup & \{\varphi_{\eta(0)}(x,b^{0}_l)\;|\;0\leqslant l<k_0\} \\
&\cup & \{\neg\varphi_{\eta(0)}(x,b^{0}_l)\;|\;l<0\text{ or }l\geqslant k_{0}\} \\
& \cup & \{\varphi_{\eta(\alpha)}(x,b^{\alpha}_0)\;|\;0<\alpha<\lambda\}
\end{IEEEeqnarray*}
is consistent. If not then, by compactness, 
\begin{equation}\label{eq:6.1}
\pi(x)\,\cup\,\{\varphi_{\eta(0)}(x,b^{0}_l)\;|\;0\leqslant l<k_0\}\,\cup\,\{\varphi_{\eta(0)}(x,b^0_s)\}\,\cup\,\{\varphi_{\eta(\alpha)}(x,b^{\alpha}_0)\;|\;0<\alpha<\lambda\}
\end{equation}
is consistent for some $s<0$ or $s\geqslant k_0$.
Assume $s<0$. Because $b^0_s b^0_0\ldots b^0_{k_0-1}\equiv_{M^0}b^0_0\ldots b^0_{k_0}$ and
$b^0_{\Zz}\subseteq M^1$, and $M^1$ is $|M^0|^+$-saturated and strongly $|M^0|^+$-homogeneous,
there exists
$f_1\in\aut(M^1/M^0)$ such that 
such that $f^0_1(b^0_s b^0_0\ldots b^0_{k_0-1})=b^0_0\ldots b^0_{k_0}$. 

We recursively construct an increasing sequence $f_{\alpha}\in\aut(M^{\alpha}/M^0)$ for $1\leqslant\alpha\leqslant\lambda$. 
Assume that we have $f_{\alpha}\in\aut(M^{\alpha}/M^0)$ and we need to define $f_{\alpha+1}$.
Consider any extension $f_{\alpha}\subseteq f_{\alpha}'\in\aut(M^{\alpha+1}/M^0)$.
As $b^{\alpha}_{\Zz}\models r_{\alpha}^{\otimes\Zz}|_{M^{\alpha}}$ 
($r_{\alpha}$ as in the condition (7) of Definition \ref{def:Qrank2}), 
we have $f'_{\alpha}(b^{\alpha}_{\Zz})\models f'_{\alpha}(r_{\alpha})^{\otimes\Zz}|_{f'_{\alpha}(M^{\alpha})}$.
We know that $f'_{\alpha}(M^{\alpha})=M^{\alpha}$, $f'_{\alpha}|_{M^0}=\id_{M^0}$ and that $r_{\alpha}$
is $M^{\alpha}/M^0$-invariant. Thus $f'_{\alpha}(b^{\alpha}_{\Zz})\models r_{\alpha}^{\otimes\Zz}|_{M^{\alpha}}$
and $f'_{\alpha}(b^{\alpha}_{\Zz})\equiv_{M^{\alpha}}b^{\alpha}_{\Zz}$.
There exists $f''_{\alpha}\in\aut(M^{\alpha+1}/M^{\alpha})$ such that $f''_{\alpha}f'_{\alpha}(b^{\alpha}_{\Zz})=b^{\alpha}_{\Zz}$.
Set $f_{\alpha+1}:=f''_{\alpha}\circ f'_{\alpha}\in\aut(M^{\alpha+1}/M^0)$ and note that $f_{\alpha}\subseteq f_{\alpha+1}$. If $\delta\leqslant\lambda$ is a limit ordinal, then simply put  $f_{\delta}:=\bigcup\limits_{1\leqslant\alpha<\delta}f_{\alpha}\in\aut(M^{\delta}/M^0)$.

After applying $f_{\lambda}$ to the set of formulae from (\ref{eq:6.1}), we obtain that
$$\pi(x)\,\cup\,\{\varphi_{\eta(0)}(x,b^{0}_l)\;|\;0\leqslant l\leqslant k_0\}\,\cup\,\{\varphi_{\eta(\alpha)}(x,b^{\alpha}_0)\;|\;0<\alpha<\lambda\}$$
is consistent which contradicts the maximality of $k_0$.

Now, assume that there are $(k_{\beta})_{\beta<\gamma}$ such that
\begin{IEEEeqnarray*}{rCl}
\pi(x) & \cup & \bigcup\limits_{\beta<\gamma} \{\varphi_{\eta(\beta)}(x,b^{\beta}_l)\;|\;0\leqslant l<k_{\beta}\} \\
& \cup & \bigcup\limits_{\beta<\gamma} \{\neg\varphi_{\eta(\beta)}(x,b^{\beta}_l)\;|\;l<0\text{ or }l\geqslant k_{\beta}\} \\
& \cup & \{\varphi_{\eta(\alpha)}(x,b^{\alpha}_0)\;|\;\gamma\leqslant\alpha<\lambda\}
\end{IEEEeqnarray*}
is consistent.
Let $k_{\gamma}<\omega$ be maximal such that
\begin{IEEEeqnarray*}{rCl}
\pi(x) & \cup & \bigcup\limits_{\beta<\gamma} \{\varphi_{\eta(\beta)}(x,b^{\beta}_l)\;|\;0\leqslant l<k_{\beta}\} \\
& \cup & \bigcup\limits_{\beta<\gamma} \{\neg\varphi_{\eta(\beta)}(x,b^{\beta}_l)\;|\;l<0\text{ or }l\geqslant k_{\beta}\} \\
& \cup & \{\varphi_{\eta(\gamma)}(x,b^{\gamma}_l)\;|\;0\leqslant l<k_{\gamma}\} \\
& \cup & \{\varphi_{\eta(\alpha)}(x,b^{\alpha}_0)\;|\;\gamma<\alpha<\lambda\}
\end{IEEEeqnarray*}
is consistent. Repeating the argument from the case of $\gamma=0$, we will obtain that also
\begin{IEEEeqnarray*}{rCl}
\pi(x) & \cup & \bigcup\limits_{\beta\leqslant\gamma} \{\varphi_{\eta(\beta)}(x,b^{\beta}_l)\;|\;0\leqslant l<k_{\beta}\} \\
& \cup & \bigcup\limits_{\beta\leqslant\gamma} \{\neg\varphi_{\eta(\beta)}(x,b^{\beta}_l)\;|\;l<0\text{ or }l\geqslant k_{\beta}\} \\
& \cup & \{\varphi_{\eta(\alpha)}(x,b^{\alpha}_0)\;|\;\gamma<\alpha<\lambda\}
\end{IEEEeqnarray*}
is consistent. \textit{Here ends the proof of Claim 1}.

Let 
$$m\models \pi(x)\,\cup\,\bigcup\limits_{\alpha<\lambda}\Big(\{\varphi_{\eta(\alpha)}(x,b^{\alpha}_l)\;|\;0\leqslant l<k_{\alpha}\}\,\cup\,\{\neg\varphi_{\eta(\alpha)}(x,b^{\alpha}_l)\;|\;l<0\text{ or }l\geqslant k_{\alpha}\}\Big),$$
$$c^{\alpha}_n:=(b^{\alpha}_{k_{\alpha}\cdot n}, b^{\alpha}_{k_{\alpha}\cdot n +1},\ldots,b^{\alpha}_{k_{\alpha}\cdot n +(k_{\alpha}-1)}),$$
$$\psi_{\alpha}(x,c^{\alpha}_n):=\bigwedge\limits_{0\leqslant j<k_{\alpha}}\varphi_{\eta(\alpha)}(x,b^{\alpha}_{k_{\alpha}\cdot n+j}),$$
where $\alpha<\lambda$ and $n\in \Zz$. Now, we are proceeding to show the first item from the thesis of the proposition, therefore fix arbitrary $\sigma:\lambda\to\Zz$.
\ 
\\
\\
\textbf{Claim 2:}
$$\exists m'\models \pi(x)\,\cup\,\bigcup\limits_{\alpha<\lambda}\{\psi_{\alpha}(x,c^{\alpha}_{\sigma(\alpha)}),\,\neg\psi_{\alpha}(x,c^{\alpha}_l)\;|\;l\neq\sigma(\alpha)\}.$$
\ 
\\
\textit{Proof of Claim 2:} First, induction on $\gamma<\lambda$:
\begin{IEEEeqnarray*}{rCl}
\exists m_{\gamma}  \models  \pi(x) & \cup & \bigcup\limits_{\alpha\leqslant\gamma} 
\{\psi_{\alpha}(x,c^{\alpha}_{\sigma(\alpha)}),\,\neg\psi_{\alpha}(x,c^{\alpha}_l)\;|\;l\neq\sigma(\alpha)\} \\
& \cup &\bigcup\limits_{\gamma<\alpha<\lambda} 
\{\psi_{\alpha}(x,c^{\alpha}_{0}),\,\neg\psi_{\alpha}(x,c^{\alpha}_l)\;|\;l\neq 0\}.
\end{IEEEeqnarray*}
We start with $m_{-1}:=m$. 

Now, assume that $m_{\gamma}$ is known and we need to find $m_{\gamma+1}$ for $\gamma+1<\lambda$.
As $b^{\gamma+1}_{\Zz}$ is $M^{\gamma+1}$-indiscernible, there exists 
$f_{\gamma+2}\in\aut(M^{\gamma+2}/M^{\gamma+1})$ such that $$f_{\gamma+2}(b^{\gamma+1}_{j})=b^{\gamma+1}_{k_{\gamma+1}\cdot \sigma(\gamma+1)+j}$$
for every $j\in\Zz$.
Note that also $$f_{\gamma+2}(c^{\gamma+1}_n)=c^{\gamma+1}_{n+\sigma(\gamma+1)}$$
for every $n\in\Zz$.

Set $\beta:=\gamma+1$. We recursively construct an increasing sequence $f_{\alpha}\in\aut(M^{\alpha}/M^{\beta})$,
where $\beta+1\leqslant\alpha\leqslant\lambda$.
The first automorphism $f_{\beta+1}=f_{\gamma+2}$ is already given.
Assume that we have everything up to $f_{\alpha}$ for some $\beta+1\leqslant\alpha\leqslant\lambda$
and we need to define $f_{\alpha+1}$.
Consider any extension $f_{\alpha}\subseteq f'_{\alpha}\in\aut(M^{\alpha+1}/M^{\beta})$.
As $b^{\alpha}_{\Zz}\models r_{\alpha}^{\otimes\Zz}|_{M^{\alpha}}$
and $r_{\alpha}$ is $M^{\alpha}/M^0$-invariant, we obtain that $f'_{\alpha}(b^{\alpha}_{\Zz})\models r_{\alpha}^{\otimes\Zz}|_{M^{\alpha}}$ and so $f'_{\alpha}(b^{\alpha}_{\Zz})\equiv_{M^{\alpha}}b^{\alpha}_{\Zz}$.
There exists $f''_{\alpha}\in\aut(M^{\alpha+1}/M^{\alpha})$ such that $f''_{\alpha}f'_{\alpha}(b^{\alpha}_{\Zz})=b^{\alpha}_{\Zz}$.
Set $f_{\alpha+1}:=f''_{\alpha}\circ f'_{\alpha}\in\aut(M^{\alpha+1}/M^{\beta})$.
If $\delta\leqslant\lambda$ is a limit ordinal then $f_{\delta}:=\bigcup\limits_{\beta+1\leqslant\alpha<\delta}f_{\alpha}\in\aut(M^{\delta}/M^{\beta})$.

We have that 
\begin{IEEEeqnarray*}{rCl}
f_{\lambda}(m_{\gamma})  \models  \pi(x) & \cup & \bigcup\limits_{\alpha\leqslant\gamma+1} 
\{\psi_{\alpha}(x,c^{\alpha}_{\sigma(\alpha)}),\,\neg\psi_{\alpha}(x,c^{\alpha}_l)\;|\;l\neq\sigma(\alpha)\} \\
& \cup &\bigcup\limits_{\gamma+1<\alpha<\lambda} 
\{\psi_{\alpha}(x,c^{\alpha}_{0}),\,\neg\psi_{\alpha}(x,c^{\alpha}_l)\;|\;l\neq 0\}.
\end{IEEEeqnarray*}
It is enough to set $m_{\gamma+1}:=f_{\lambda}(m_{\gamma})$.

If $\delta<\lambda$ is a limit ordinal and for every $\gamma<\delta$
\begin{IEEEeqnarray*}{rCl}
\pi(x) & \cup & \bigcup\limits_{\alpha\leqslant\gamma} 
\{\psi_{\alpha}(x,c^{\alpha}_{\sigma(\alpha)}),\,\neg\psi_{\alpha}(x,c^{\alpha}_l)\;|\;l\neq\sigma(\alpha)\} \\
& \cup &\bigcup\limits_{\gamma<\alpha<\lambda} 
\{\psi_{\alpha}(x,c^{\alpha}_{0}),\,\neg\psi_{\alpha}(x,c^{\alpha}_l)\;|\;l\neq 0\}.
\end{IEEEeqnarray*}
is consistent, then (with a little help of compactness) we have also that
\begin{IEEEeqnarray*}{rCl}
\pi(x) & \cup & \bigcup\limits_{\alpha<\delta} 
\{\psi_{\alpha}(x,c^{\alpha}_{\sigma(\alpha)}),\,\neg\psi_{\alpha}(x,c^{\alpha}_l)\;|\;l\neq\sigma(\alpha)\} \\
& \cup &\bigcup\limits_{\delta\leqslant\alpha<\lambda} 
\{\psi_{\alpha}(x,c^{\alpha}_{0}),\,\neg\psi_{\alpha}(x,c^{\alpha}_l)\;|\;l\neq 0\}.
\end{IEEEeqnarray*}
is consistent. Similarly as in the successor step, we can shift $c^{\delta}_{\Zz}$
keeping in place $\dom(\pi)$, $b^{<\delta}_{\Zz}$ and $b^{>\delta}_{\Zz}$
to obtain that 
\begin{IEEEeqnarray*}{rCl}
\pi(x) & \cup & \bigcup\limits_{\alpha\leqslant\delta} 
\{\psi_{\alpha}(x,c^{\alpha}_{\sigma(\alpha)}),\,\neg\psi_{\alpha}(x,c^{\alpha}_l)\;|\;l\neq\sigma(\alpha)\} \\
& \cup &\bigcup\limits_{\delta<\alpha<\lambda} 
\{\psi_{\alpha}(x,c^{\alpha}_{0}),\,\neg\psi_{\alpha}(x,c^{\alpha}_l)\;|\;l\neq 0\}.
\end{IEEEeqnarray*}
is consistent. Let $m_{\delta}$ be one of its realizations.

Assume that $(m_{\gamma})_{\gamma<\lambda}$ is the sequence obtained in the inductive process.
The thesis of Claim 2 follows by compactness. \textit{Here ends the proof of Claim 2}.

Claim 2 is the content of the first item from the thesis of the proposition.
For the second item of the thesis, we briefly argue shortly as follows.
Because of the condition (7) in Definition \ref{def:Qrank2}, for each $\alpha<\lambda$ there exists $l_{\alpha}<\omega$ such that $\{\varphi_{\eta(\alpha)}(x,b^{\alpha}_i)\;|\;i\in\Zz\}$ is $l_{\alpha}$-inconsistent. In particular for each $\alpha<\lambda$, we have that
$\{\psi_{\alpha}(x,c^{\alpha}_n)\;|\;n\in\Zz\}$ is $l_{\alpha}$-incosistent.
\end{proof}

\begin{fact}[Fact 2.11 in \cite{dpMIN}]\label{many} 
For any theory $T$ the following are equivalent.
\begin{enumerate}
\item $T$ is dp-minimal.

\item There is no sequence of  formulae $\psi_1(x,y), \dots, \psi_{2^n}(x,y)$ with $|x|=n$
and sequences $(a^j_i)_{i\in\omega}$ with $1 \leqslant j \leqslant 2^n$  
so that for any $\sigma: \{1, \dots, 2^n\} \to \omega$, the set
\[ \bigwedge_{1 \leqslant k \leqslant 2^n} \psi_k(x, a^k_{\sigma(k)}) \wedge \bigwedge_{1 \leqslant k \leqslant 2^n}
\bigwedge_{l\neq\sigma(k)}\neg\psi_k(x, a^k_l)\] 
is consistent.
\end{enumerate}
\end{fact}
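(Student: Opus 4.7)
The plan is to identify condition~(2) with a strict ICT-pattern of depth $2^n$ in the variable $x$ with $|x|=n$, consistent with $\{x=x\}$: the negations $\neg\psi_k(x,a^k_l)$ for $l\neq\sigma(k)$ encode precisely the ``mutually indiscernible'' feature of the standard dp-rank formulation. Since dp-minimality implies NIP, by a standard Ramsey extraction one may replace each row $(a^k_i)_{i<\omega}$ by an indiscernible subsequence without destroying the consistency clause, and the rows become mutually indiscernible. In the NIP setting, by the Kaplan--Onshuus--Usvyatsov machinery the existence of such a configuration is equivalent to $\drk(x=x)\geqslant 2^n$. Thus (1)$\Leftrightarrow$(2) reduces to the assertion that $T$ is dp-minimal if and only if $\drk(x=x)<2^n$ for every $|x|=n<\omega$.

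For (2)$\Rightarrow$(1), I would argue contrapositively: if $T$ is not dp-minimal then $\drk(x_0=x_0)\geqslant 2$ for a single-variable $x_0$, which produces two formulas $\psi_1,\psi_2$ and mutually indiscernible sequences $(a^1_i)_{i<\omega},(a^2_i)_{i<\omega}$ realising exactly the array forbidden by~(2) in the case $n=1$ (since $2^1=2$), so (2) fails.

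For (1)$\Rightarrow$(2), the cleanest route is to invoke the subadditivity of dp-rank, proved by Kaplan--Onshuus--Usvyatsov for NIP theories: $\drk(x=x)\leqslant n\cdot\drk(x_0=x_0)\leqslant n$ whenever $|x|=n$, so $\drk(x=x)\leqslant n<2^n$ for every $n\geqslant 1$ (the case $n=0$ is trivial, since consistency for different $\sigma$ would impose contradictory truth values on the sentences $\psi_1(a^1_j)$). The main obstacle is precisely this appeal to subadditivity, which is a nontrivial external theorem. A self-contained alternative would proceed by induction on $n$, splitting $x=(x_1,x')$ with $|x_1|=1$: given $2^n$ mutually indiscernible sequences witnessing $\drk(b_1b')\geqslant 2^n$, dp-minimality forces all but at most one of them to be indiscernible over $b_1$, so $\geqslant 2^n-1$ of them are mutually indiscernible over $b_1$ and witness a pattern in the $n-1$ variables of $b'$, contradicting the inductive bound $2^{n-1}-1$. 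The subtle point along this route is preserving mutual indiscernibility over $b_1$, which itself relies on the dp-minimality dichotomy applied to the remaining family.
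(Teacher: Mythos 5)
The paper does not prove this statement: it is imported verbatim as Fact 2.11 from \cite{dpMIN}, with no in-paper argument, so there is nothing internal to compare your proof against. On its own merits, your subadditivity route for (1)$\Rightarrow$(2) is sound, but note that it proves something strictly sharper than what is stated: Kaplan--Onshuus--Usvyatsov subadditivity gives $\drk(a/C)\leqslant n$ for any $n$-tuple $a$ in a dp-minimal theory, so the configuration forbidden by (2) already cannot occur at width $n+1$. The bound $2^n$ in the cited result reflects the more elementary inductive argument in \cite{dpMIN}, which does not appeal to subadditivity; your sketched inductive alternative is closer to that source's spirit.

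That alternative, however, has a genuine gap which you flag but do not close. Dp-minimality of the singleton $b_1$ gives only that \emph{at most one} of the $2^n$ mutually indiscernible rows fails to be indiscernible over $b_1$ individually; it does not make the remaining rows \emph{mutually} indiscernible over $b_1$, which is exactly what the inductive hypothesis on $(n-1)$-tuples requires. Recovering mutual indiscernibility over the enlarged base needs a separate shrinking / base-change step (a standard NIP lemma, e.g.\ as in Simon's \emph{Guide to NIP theories}); without it the induction does not close, and since this is where the $2^n$ rather than $n+1$ should emerge, it is the load-bearing part of that route. Finally, in both directions you tacitly invoke the equivalence between the ``strict ICT pattern'' formulation (with the negated conjuncts, as in (2)) and the randomness-pattern definition of $\drk$ that the paper adopts; this equivalence is standard but not immediate --- the negated conjuncts are precisely what makes it nontrivial --- and it should be cited rather than elided.
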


\begin{corollary}
    If $T$ is dp-minimal then $\tilde{D}_Q(\pi(x))<2^{|x|}$.
\end{corollary}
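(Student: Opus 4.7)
The plan is to argue by contraposition, converting the witnesses produced by Proposition \ref{prop:tilde.rank} directly into an inp-pattern of depth $2^{|x|}$, and then apply Fact \ref{many}.

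Set $n := |x|$ and suppose for contradiction that $\tilde{D}_Q(\pi(x)) \geqslant 2^{n}$. Applying Proposition \ref{prop:tilde.rank} with $\lambda = 2^{n}$, we obtain formulae $\psi_\alpha(x, y_\alpha)$ and sequences $c^\alpha_{\Zz} = (c^\alpha_i)_{i \in \Zz}$ for $\alpha < 2^{n}$ such that for every $\sigma : 2^n \to \Zz$,
\[
\pi(x) \;\cup\; \bigcup_{\alpha < 2^n} \bigl\{\psi_\alpha(x, c^\alpha_{\sigma(\alpha)})\bigr\} \;\cup\; \bigcup_{\alpha < 2^n} \bigl\{\neg\psi_\alpha(x, c^\alpha_l) \,:\, l \neq \sigma(\alpha)\bigr\}
\]
is consistent. (The $l_\alpha$-inconsistency clause from the proposition will not be needed here.)

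Restricting each row to the sub-sequence indexed by $\omega \subseteq \Zz$ preserves the consistency property: for every $\sigma : \{1, \ldots, 2^n\} \to \omega$, the set
\[
\bigwedge_{1 \leqslant k \leqslant 2^n} \psi_k(x, c^k_{\sigma(k)}) \;\wedge\; \bigwedge_{1 \leqslant k \leqslant 2^n} \bigwedge_{l \neq \sigma(k)} \neg \psi_k(x, c^k_l)
\]
is a consequence of the consistent set above (with $\pi$ dropped), hence is itself consistent. This is exactly a witness to condition (2) of Fact \ref{many} (if the formulae $\psi_\alpha$ have parameter tuples of differing lengths, pad each with dummy variables to unify them — this does not affect consistency). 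By Fact \ref{many}, $T$ is not dp-minimal, a contradiction.

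There is essentially no obstacle here: Proposition \ref{prop:tilde.rank} was designed precisely to manufacture the array structure that appears in the inp-pattern characterisation of dp-minimality. The only minor point to check is the cosmetic issue of aligning parameter variables across the $2^n$ rows, which is handled by padding with dummies.
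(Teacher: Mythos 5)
Your proof is correct and takes exactly the same route as the paper's (which simply cites Proposition \ref{prop:tilde.rank} and Fact \ref{many}). You have just spelled out the minor bookkeeping — restricting $\Zz$ to $\omega$, discarding $\pi$, and padding the parameter variables — all of which only pass to subsets of consistent sets and so preserve consistency.
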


\begin{proof}
By Proposition \ref{prop:tilde.rank} and Fact \ref{many}.
\end{proof}

\begin{question}
What other properties does $\tilde{D}_Q$ satisfyies in the dp-minimal context?
For example - do we have a counterpart of Lemma \ref{lemma:finite.character} in the dp-minimal context?
\end{question}

The following definitions may be found in \cite{ArtemNTP2}.

\begin{definition}
\begin{enumerate}
    \item An inp-pattern in $\pi(x)$ of depth $\lambda$ is $(l_{\alpha},\varphi_{\alpha}(x,y_{\alpha}),(b^{\alpha}_i)_{i<\omega})_{\alpha<\lambda}$ such that
    \begin{itemize}
        \item $\{\varphi_{\alpha}(x,b^{\alpha}_i)\;|\;i<\omega\}$ is $l_{\alpha}$-inconsistent for each $\alpha<\lambda$,
        \item $\pi(x)\,\cup\,\{\varphi_{\alpha}(x,b^{\alpha}_{\sigma(\alpha)})\;|\;\alpha<\lambda\}$ is consistent for each $\sigma:\lambda\to\omega$.
    \end{itemize}
    \item The burden of $\pi(x)$, denoted $\irk(\pi(x))$, is the the supremum of the depths of all inp-patterns in $\pi(x)$.
    
    \item 
    We let the dp-rank of $\pi(x)$, denoted $\drk(\pi(x))$, be the supremum of $\lambda$ for which there are $a\models\pi$ and mutually indiscernible over $C:=\dom(\pi)$
    sequences $(b^{\alpha}_i)_{i<\omega}$, where $\alpha<\lambda$,
    such that none of them is indiscernible over $aC$.
\end{enumerate}
\end{definition}

\begin{fact}\label{fact.NTP2}
\begin{enumerate}
    \item $\irk(\pi(x))\leqslant\drk(\pi(x))$ (Fact 3.8 in \cite{ArtemNTP2}).
    \item If $T$ is NIP then $\irk(\pi(x))=\drk(\pi(x))$ for every partial type $\pi(x)$ (Fact 3.8 in \cite{ArtemNTP2}).
    \item $T$ is NTP$_2$ iff $\irk(\pi(x))<\infty$ for every partial type $\pi(x)$ (Fact 
    3.2 in \cite{ArtemNTP2}). 
    \item $T$ is NIP iff $\drk(\pi(x))<\infty$ for every partial type $\pi(x)$ (Proposition 10 in \cite{Adler}). 
\end{enumerate}
\end{fact}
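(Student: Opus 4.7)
The statement gathers four standard results from the literature on NTP$_2$ and NIP theories. My plan is to treat each item in turn, leaning on Ramsey-style extraction and the defining combinatorics of TP$_2$ and IP.

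For item (1), given an inp-pattern $(l_\alpha, \varphi_\alpha, (b^\alpha_i)_{i<\omega})_{\alpha < \lambda}$ witnessing $\irk(\pi(x)) \geq \lambda$, I would apply the standard extraction lemma (iterated Ramsey together with compactness) to replace the rows $(b^\alpha_i)_i$ by rows $(c^\alpha_i)_i$ that are mutually indiscernible over $C := \dom(\pi)$, while retaining both the $l_\alpha$-inconsistency of each row and the consistency of $\pi(x) \cup \{\varphi_\alpha(x, c^\alpha_{\sigma(\alpha)}) : \alpha < \lambda\}$ for all $\sigma$. Pick $a$ realising $\pi(x) \cup \{\varphi_\alpha(x, c^\alpha_0) : \alpha < \lambda\}$. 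For each $\alpha$, if $(c^\alpha_i)_i$ were indiscernible over $aC$, then $\models \varphi_\alpha(a, c^\alpha_i)$ would hold for every $i$, contradicting $l_\alpha$-inconsistency. Hence $(c^\alpha_i)_{i<\omega}$, $\alpha<\lambda$, witness $\drk(\pi) \geq \lambda$.

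For item (3), TP$_2$ is by definition the presence, for some formula $\varphi$, of an infinite inp-pattern using $\varphi$ alone, so the ``if'' direction of (3) is immediate: if $\irk(\pi) < \infty$ for every partial type $\pi$, then no single formula admits an infinite inp-pattern. Conversely, if $\irk(\pi) = \infty$ for some $\pi$, compactness and pigeonhole applied to the finitely many formulas used in a large enough finite sub-pattern yield an infinite inp-pattern in a single formula, i.e.\ TP$_2$. Item (4) runs analogously for the dp-rank and IP, using the observation that a non-indiscernible sequence over $aC$ is witnessed by some formula alternating along the sequence; pigeonhole concentrates on a single such formula to get IP, and conversely a formula with IP supplies mutually indiscernible rows non-indiscernible over $aC$ of any prescribed depth.

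The genuinely delicate item is (2). Given mutually indiscernible sequences $(b^\alpha_i)_{i<\omega}$ over $C$ and $a \models \pi$ such that no row is indiscernible over $aC$, I would use NIP to upgrade this to an inp-pattern of depth $\lambda$ in $\pi$. For each $\alpha$, NIP yields a formula $\varphi_\alpha(x, y_\alpha)$ and a uniform bound $k_\alpha$ on the length of an alternating pattern of $\varphi_\alpha(a, -)$ along any $C$-indiscernible sequence, while non-indiscernibility of $(b^\alpha_i)_i$ over $aC$ supplies at least one actual alternation. A suitable shift/averaging argument then extracts from $(b^\alpha_i)_i$ a sequence $(d^\alpha_j)_j$ and a formula $\psi_\alpha$ such that $\{\psi_\alpha(x, d^\alpha_j) : j < \omega\}$ is $l_\alpha$-inconsistent, with consistency across rows preserved by mutual indiscernibility and the realisation $a$. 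The main obstacle, and the step where I expect to have to be careful, is coordinating this construction simultaneously across all $\alpha < \lambda$ without disturbing mutual indiscernibility: this is exactly the content of Fact 3.8 in \cite{ArtemNTP2}, which I would ultimately cite rather than reprove.
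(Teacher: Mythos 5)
This ``Fact'' is stated in the paper without proof --- all four items are simply quoted from the literature (items (1)--(3) from Chernikov's paper on NTP$_2$, item (4) from Adler), so there is no in-paper argument to match against. Your proposal is essentially correct but takes a different route in that you sketch proofs of (1), (3) and (4) rather than only citing them, and you defer to the same citation as the paper for the genuinely substantial item (2). Your argument for (1) is exactly the standard one: extract an array with mutually indiscernible rows over $C=\dom(\pi)$ preserving the inp-pattern, realize $\pi(x)\cup\{\varphi_\alpha(x,c^\alpha_0)\}$, and note that indiscernibility of a row over $aC$ would propagate $\varphi_\alpha(a,c^\alpha_i)$ along the whole row, contradicting $l_\alpha$-inconsistency; this is correct. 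For (3) your outline has the right shape but is stated loosely: the pigeonhole must produce rows sharing both the formula \emph{and} the inconsistency bound $k$, and it should be applied after first using compactness to pass from arbitrarily deep patterns to a single pattern of depth $>|T|\cdot\aleph_0$ (a ``large enough finite sub-pattern'' by itself does not hand you an infinite single-formula pattern without that extra compactness step); similarly for (4). These are wrinkles, not gaps, and since the results are quoted facts the citation route the paper takes is the economical one; what your sketches buy is a self-contained justification of the easy inequalities, in particular of (1), which is the only item actually used quantitatively in the corollary bounding $\tilde{D}_Q$ by the burden and the dp-rank.
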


\begin{corollary}
    \begin{enumerate}
        \item For every $Q$ and partial type $\pi$, we have $$\tilde{D}_Q(\pi)\leqslant\irk(\pi)\leqslant\drk(\pi).$$
        
        \item If $T$ is NTP$_2$ then for every $Q$ and partial type $\pi$, we have
        $$\tilde{D}_Q(\pi)<\infty.$$
    \end{enumerate}
\end{corollary}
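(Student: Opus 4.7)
The plan is to reduce everything to Proposition \ref{prop:tilde.rank}, which does almost all the work.

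For the first inequality of part (1), I would argue as follows: suppose $\tilde{D}_Q(\pi)\geqslant\lambda$. Apply Proposition \ref{prop:tilde.rank} to obtain natural numbers $l_\alpha$, formulae $\psi_\alpha(x,y_\alpha)$, and sequences $c^\alpha_{\Zz}=(c^\alpha_i)_{i\in\Zz}$ for $\alpha<\lambda$ with the two bulleted properties. Restrict each sequence to the positive indices, i.e. set $d^\alpha_i:=c^\alpha_i$ for $i<\omega$. Then $\{\psi_\alpha(x,d^\alpha_i)\;|\;i<\omega\}$ is still $l_\alpha$-inconsistent (second bullet). For any $\sigma\colon\lambda\to\omega$, viewing $\sigma$ as a map into $\Zz$ and invoking the first bullet gives that
$$\pi(x)\cup\bigcup_{\alpha<\lambda}\{\psi_\alpha(x,d^\alpha_{\sigma(\alpha)})\}\subseteq \pi(x)\cup\bigcup_{\alpha<\lambda}\{\psi_\alpha(x,c^\alpha_{\sigma(\alpha)}),\neg\psi_\alpha(x,c^\alpha_l)\;|\;l\neq\sigma(\alpha)\}$$
is consistent. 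Therefore $(l_\alpha,\psi_\alpha(x,y_\alpha),(d^\alpha_i)_{i<\omega})_{\alpha<\lambda}$ is an inp-pattern in $\pi$ of depth $\lambda$, so $\irk(\pi)\geqslant\lambda$. Taking the supremum over $\lambda$ yields $\tilde{D}_Q(\pi)\leqslant\irk(\pi)$.

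The second inequality of part (1), namely $\irk(\pi)\leqslant\drk(\pi)$, is exactly Fact \ref{fact.NTP2}(1).

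For part (2), if $T$ is NTP$_2$ then by Fact \ref{fact.NTP2}(3) we have $\irk(\pi)<\infty$ for every partial type $\pi$, so part (1) gives $\tilde{D}_Q(\pi)\leqslant\irk(\pi)<\infty$ for every $Q$ and every $\pi$.

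There is essentially no obstacle in this proof: the heavy combinatorial work has already been carried out in Proposition \ref{prop:tilde.rank}, whose conclusion is strictly stronger than the definition of an inp-pattern (it supplies the extra negations $\neg\psi_\alpha(x,c^\alpha_l)$ for $l\neq\sigma(\alpha)$, which are irrelevant for inp-patterns but will be needed elsewhere, e.g. in the dp-minimal bound via Fact \ref{many}). The only tiny care required is to notice that the index set $\Zz$ in Proposition \ref{prop:tilde.rank} can be replaced by $\omega$ for the purposes of matching the definition of burden.
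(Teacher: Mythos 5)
Your proof is correct and follows exactly the same route as the paper's one-line proof, which cites the definition of $\irk$, Fact \ref{fact.NTP2}, and Proposition \ref{prop:tilde.rank}; you have simply spelled out the (routine) detail that the $\Zz$-indexed array from Proposition \ref{prop:tilde.rank} restricts to an $\omega$-indexed inp-pattern. No gap.
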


\begin{proof}
Follows from the definition of $\irk(\pi)$, Fact \ref{fact.NTP2} and Proposition \ref{prop:tilde.rank}.
\end{proof}

As we see above, $\tilde{Q}$-rank is finite in NSOP$_1$ and finite-dp-rankminimal theories. There is a natural question: what is the class of theories for which $\tilde{Q}$-rank is finite. We need to look for a class of theories which generalizes finite-dp-rankminimal and NSOP$_1$ theories at the same time. In their recent work, authors of \cite{NATP} provide a new dividing line in the stability hierarchy, called the \emph{antichain tree property} and show that theories without the antichain tree property (NATP) generalize NTP$_2$ and NSOP$_1$. 

\begin{question}
Is $\tilde{D}_Q$ finite in NATP?
Is $\tilde{D}_Q$ infinite in ATP?
\end{question}
\noindent 
A prospective approach to answer the second above question above, 
could be by considering the
$\tilde{Q}$-rank in the context of NSOP theories with SOP$_1$. 

Moreover, one could use finiteness of the $\tilde{Q}$-rank to indicate that a given theory
is placed among an extension of
\textit{the current known boundary of the combinatorially tame universe} \cite{Minh17}.
For example, the author of \cite{Minh17} notes that his theory ACFO of algebraically closed fields with multiplicative circular orders has TP$_2$ (Proposition 3.25 in \cite{Minh17}), but on the other hand his theory is a quite natural expansion of the theory of the algebraic closure of a finite field. 
For sure it is worth checking whether the $\tilde{Q}$-rank is finite (or ordinary-valued) in the case of ACFO.

\subsection{Example where rank is infinite}\label{ssec:exmample}
	Let $B$ be a Boolean algebra considered in language $L_{BA}=\{\wedge,\vee,^c, 0,1\}$ and $(P,\leqslant,0_P,1_P)$ a linearly ordered set with minimal element $0_P$ and maximal element $1_P$. We call a function $v:B\to P$ a valuation, if $v(x)=0_P$ iff $x=0$, $V(1)=1_P$, and $v(x\vee y)=\max(v(x),x(y))$ for all $x,y\in B$.
	Now let $L_{VBA}=L_{BA}\cup \{v,0_P,1_P\}$ be a two-sorted language on sorts $B$ and $P$, where $v$ is a symbol of a unary function from $B$ to $P$.  Let  $DVBA_0$ be the $L_{VBA}$-theory expressing that $(B,\vee,\wedge,^c,0,1)$ is an atomless Boolean algebra, $v:B\to P$ is a valuation, and for all $x\leqslant y$ in $B$ (here, ``$\leqslant$" indicates the canonical ordering in Boolean algebras) and any $p\in P$ such that $v(x)<p<v(y)$ there exists $z\in B$ with $x\leqslant z\leqslant y$  and $v(z)=p$ (density) and there are disjoint $y_1,y_2\leqslant y$ with $v(y_1)=v(y_2)=v(y)$ (no valuation-atoms).
	
	It is easy to see by compactness that $DVBA_0$ is consistent.
	By Example 4.33 from \cite{NATP}, $DVBA_0$ has ATP, so we intuitively expect that the $\tilde{Q}$-rank
	is infinite in $DVBA_0$. Let us argue on that.
	
	\begin{proposition}
		$DVBA_0$ is complete and admits QE in $L_{VBA}$.
	\end{proposition}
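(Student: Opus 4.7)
The plan is a standard back-and-forth argument between arbitrary $M, M' \models DVBA_0$ with $M'$ sufficiently saturated, showing that any isomorphism between finitely generated substructures extends to include any prescribed element of $M$ in its domain. By the Tent--Ziegler criterion this gives quantifier elimination, and completeness follows because $\langle \emptyset \rangle$ in any model is just $\{0,1\} \subseteq B$ together with $\{0_P, 1_P\} \subseteq P$, providing a canonical starting partial isomorphism. A finitely generated substructure $A$ decomposes as a pair $(B_0, P_0)$ with $B_0$ a finite Boolean subalgebra having atoms $a_1, \dots, a_n$, and $P_0 \subseteq P(M)$ a finite linear suborder containing $\{0_P, 1_P, v(a_1), \dots, v(a_n)\}$; an isomorphism $f \colon A \to A'$ is then determined by a bijection on atoms together with an order-preserving bijection $P_0 \to P'_0$ commuting with $v$ on atoms.

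The one-point extension splits into two cases. If the new element $c$ lies in $P(M)$, it sits in some open interval of $P_0 \cup \{0_P, 1_P\}$ (or equals a member of $P_0$, in which case nothing is needed); using saturation of $M'$ and the fact that $P$ is densely ordered in every model of $DVBA_0$ (verified from the axioms by combining no-valuation-atoms with density to realize intermediate values in $P$), one finds $c' \in P(M')$ with the same cut over $f(P_0)$. If $c \in B(M)$, write $u_i := a_i \wedge c$ and $w_i := a_i \wedge c^c$, so that $u_i \vee w_i = a_i$ disjointly and $\max(v(u_i), v(w_i)) = v(a_i)$ (setting $v(0) := 0_P$). Apply the $P$-case finitely many times to extend $f$ so that $P_0$ contains the nonzero values among the $v(u_i), v(w_i)$; then split each atom $a'_i := f(a_i)$ into disjoint $b_{1,i}, b_{2,i}$ covering $a'_i$ with valuations matching $f(v(u_i)), f(v(w_i))$, and put $c' := \bigvee_i b_{1,i}$. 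Since the $a'_i$ partition $1$, the element $c'$ reproduces the Boolean and valuation data of $c$ over $A'$.

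The main obstacle is the atom-splitting step: for an atom $a'_i$ and targets $p \leq q = v(a'_i)$, split $a'_i$ into $b_1, b_2$ with $b_1 \vee b_2 = a'_i$, $b_1 \wedge b_2 = 0$, $v(b_1) = p$, $v(b_2) = q$. I handle three subcases, each invoking one axiom of $DVBA_0$. If $p = 0_P$, take $b_1 = 0, b_2 = a'_i$. If $0_P < p < q$, apply the density axiom to the pair $0 \leq a'_i$ (noting $v(0) = 0_P < p < v(a'_i)$) to obtain $b_1$ with $v(b_1) = p$, then set $b_2 := a'_i \wedge b_1^c$, whose value is forced to be $q$ by the valuation identity $\max(v(b_1), v(b_2)) = v(a'_i)$ together with $p < q$. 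If $p = q = v(a'_i)$, apply the no-valuation-atoms axiom to split $a'_i$ into two disjoint pieces of maximal value, keep one as $b_1$, set $b_2 := a'_i \wedge b_1^c$, and use monotonicity of $v$ (noting $b_2$ contains the second piece) to conclude $v(b_2) = v(a'_i)$. Once this splitting lemma is established, the back-and-forth proceeds mechanically, yielding QE and hence completeness.
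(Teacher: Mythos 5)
Your proof is correct and takes essentially the same route as the paper's: a back-and-forth argument in which a new element $c$ of the Boolean sort is reconstructed atom-by-atom, by first placing the target valuations $v(a_i\wedge c)$, $v(a_i\wedge c^c)$ in the order sort and then splitting each image atom into two disjoint pieces with those prescribed valuations. One small point in your favor: the paper attributes the atom-splitting step entirely to the density axiom, but (as your three-subcase lemma makes explicit) the case where both pieces must carry the full valuation $v(a'_i)$ genuinely requires the no-valuation-atoms axiom; your more careful case analysis is what actually makes the step go through. The remaining differences — working against a saturated target rather than between two arbitrary models, and phrasing the $P$-sort extension via cuts rather than via quantifier elimination in dense linear orders with endpoints — are cosmetic.
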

\begin{proof}
	Let $M,N\models DVBA_0$ and let $f:A\to B$ be an isomorphism with $A,B$ finite substructures of $M$ and $N$, respectively.  Let $a\in M$ be an element.  By a standard back-and-forth argument, it is enough to find $b\in B$ such that $f$ extends to an isomorphism of the substructures generated by $Aa$ and $Bb$. Let $(a_1,\dots,a_k)$ be all the atoms in $A$ and $(b_1,\dots,b_k)$ all the atoms in $B$. Let $P(A)=\{p_1,\dots,p_l\}$ and $P(B)=\{q_1,\dots,q_l\}$ be ascending enumerations in sort $P$.
	Assume $a\in B(M)$ (the argument in case $a\in P(M)$ is very simlar). 
    By the quantifier elimination in $(P,\leqslant, 0_P, 1_P)$, we can find $q'_1,\dots,q'_k,q''_1,\dots,q''_k\in P(N)$ such that  $(q_1,\dots,q_l,q'_1,\dots,q'_k,q''_1,\dots,q''_k)$ has the same quantifier-free type as $$(p_1,\dots,p_l,v(a\wedge a_1),\dots,v(a\wedge a_k),v(a^c\wedge a_1),\dots,v(a^c\wedge a_k)).$$
    By the density axiom, for every $i\leqslant k$ we can find $b'_i\leqslant b_i$ such that $v(b'_i)=q'_i$ and $v(b_i\wedge b_i^{\prime c})=q''_i$. Put $b:=\bigvee\limits_{i\leqslant k} b'_i$. Define $f'(a\wedge a_i)=b'_i$
    for all $i\leqslant k$ and 
    $$f'(v(a\wedge a_1),\dots,v(a\wedge a_k),v(a^c\wedge a_1),\dots,v(a^c\wedge a_k))=(q'_1,\dots,q'_k,q''_1,\dots,q''_k).$$
    Then $f\cup f'$ uniquely extends to an isomorphism of the substructures generated by $Aa$ and $Bb$ which sends $a$ to $b$, as required.
	\end{proof}

	Let $q(y)=\{0_P<v(y\wedge m)=v(y)<v(m')\;|\;m,m'\in B\setminus \{0\}\}$.
	We claim that $q(y)$ determines a complete global type (note it will obviously be $\emptyset$-invariant). Clearly $q(y)$ determines a complete type of $y$ in the Boolean algebra $B$. Hence, by the quantifier elimination,
	it is enough to prove that for any $x,y\models q$ and any $a,b,c,d\in B$ we have 
	$v((x\wedge a )\vee (x^c\wedge b))<v((x\wedge c )\vee (x^c\wedge d))$ if and only if $v((y\wedge a )\vee (y^c\wedge b))<v((y\wedge c )\vee (y\wedge d))$, and likewise for equality in place of inequality. Note that if $b\neq  0$ then $v(x^c\wedge b)=v(b)>v(x\wedge a )$, so $v((x\wedge a )\vee (x^c\wedge b))=v(b)$. On the other hand, if $b=0$ then $v((x\wedge a )\vee (x^c\wedge b))=v(x\wedge a)=v(x)$. Hence, the equivalences easily follow by inspection.
	
	Now, for any model $M$ define $r_M(y)=\{0_P<v(y\wedge m)=v(y)<v(m')\;|\;m,m'\in B, v(m)\in conv(v[B(M)\setminus \{0\}])\}\cup\{y\wedge c=0\;|\;v(c)<v[B(M)\setminus \{0\}]\}$, where $conv$ denotes the convex hull operation in $(P,\leqslant)$. Note $q|_M\subseteq r_M$. Again, by quantifier elimination $r_M(y)$ determines a complete global type. Moreover, as $(v[B(M)\setminus 0])$ and its convex hull are invariant under automorphisms preserving $M$ setwise, we get that $r_M(y)$ is $M/M_0$-invariant for any $M_0\prec M$.
	
	Let $\varphi(x,y)=(x\neq 0\wedge x\leqslant y)$ and $Q=(\varphi(x,y),q(y))$. We claim that $\tilde{D}_Q(x=x)=\infty$ for $x$ a variable of the sort $B$. 
	Fix any ordinal $\lambda$. Choose  $(M^\alpha, b^\alpha)_{\alpha<\lambda}$ 
	with $(M^\alpha)_{\alpha<\lambda}$ continuous so that 
	$b^\alpha\models q|_{M^\alpha}$ and $b^{\alpha}M^{\alpha}\subseteq M^{\alpha+1}$ for every $\alpha<\lambda$,
	and each $M^{\alpha+1}$ is $|M^{\alpha}|^+$-saturated and strongly $|M^{\alpha}|^+$-homogeneous.
	Then $r|_{M^\alpha}$ witnesses that $\varphi(x,b^\alpha)$ Kim-divides (with respect to a semi-global type) over $M^\alpha$ for every $\alpha<\lambda$. 
	On the other hand, $\{\varphi(x,b^\alpha)\;|\;\alpha<\lambda\}$ is consistent by compactness and the choice of $q$. This shows that $\tilde{D}_Q(x=x)\geqslant \lambda$, so $\tilde{D}_Q(x=x)=\infty$.

\bibliographystyle{plain}
\bibliography{1nacfa2}

\begin{thebibliography}{10}

\bibitem{Adler}
Hans Adler.
\newblock Strong theories, burden, and weight.
\newblock available at
  \url{http://www.logic.univie.ac.at/~adler/docs/strong.pdf} .

\bibitem{HansThesis}
Hans Adler.
\newblock {\em Explanation of {I}ndependence}.
\newblock PhD thesis, Albert-Ludwigs-Universit\"{a}t Freiburg im Breisgau,
  2005.

\bibitem{NATP}
JinHoo Ahn, Joonhee Kim, and Junguk Lee.
\newblock On the antichain tree property.
\newblock available on https://arxiv.org/abs/2106.03779.

\bibitem{casasimpl}
Enrique Casanovas.
\newblock {\em Simple theories and hyperimaginaries}.
\newblock Lecture Notes in Logic. Cambridge University Press, 2011.

\bibitem{zoeIP}
Zoé Chatzidakis.
\newblock Model theory of profinite groups having the iwasawa property.
\newblock {\em Illinois J. Math.}, 42(1):70--96, 03 1998.

\bibitem{zoe2002}
Zo{\'e} Chatzidakis.
\newblock Properties of forking in $\omega$-free pseudo-algebraically closed
  fields.
\newblock {\em Journal of Symbolic Logic}, 67(3):957–996, 2002.

\bibitem{ChaPil}
Zo{\'e} Chatzidakis and Anand Pillay.
\newblock Generic structures and simple theories.
\newblock {\em Annals of Pure and Applied Logic}, 95(1-3):71--92, 1998.

\bibitem{cherlindriesmacintyre}
G.~Cherlin, L.~van~den Dries, and A.~Macintyre.
\newblock The elementary theory of regularly closed fields.
\newblock Available on
  \verb"http://sites.math.rutgers.edu/~cherlin/Preprint/CDM2.pdf".

\bibitem{ArtemNTP2}
Artem Chernikov.
\newblock Theories without the tree property of the second kind.
\newblock {\em Annals of Pure and Applied Logic}, 165(2):695--723, 2014.

\bibitem{CKR}
Artem Chernikov, Kim Byungham, and Nicholas Ramsey.
\newblock Transitivity, lowness, and ranks in {N}{S}{O}{P}$_1$ theories.
\newblock available on \verb"https://arxiv.org/pdf/2006.10486.pdf".

\bibitem{ArtemNick}
Artem Chernikov and Nicholas Ramsey.
\newblock On model-theoretic tree properties.
\newblock {\em Journal of Mathematical Logic}, 16(02):1650009, 2016.

\bibitem{conant_kruckman_2019}
Gabriel Conant and Alex Kruckman.
\newblock Independence in generic incidence structures.
\newblock {\em The Journal of Symbolic Logic}, 84(2):750–780, 2019.

\bibitem{dElbee21}
Christian D'Elb\'{e}e.
\newblock Forking, imaginaries and other features of acfg.
\newblock {\em The Journal of Symbolic Logic}, page 1–34, 2021.

\bibitem{bilinear}
Jan Dobrowolski.
\newblock Sets, groups, and fields definable in vector spaces with a bilinear
  form.
\newblock available on \verb"https://arxiv.org/abs/2004.07238".

\bibitem{DKR}
Jan Dobrowolski, Byunghan Kim, and Nicholas Ramsey.
\newblock Independence over arbitrary sets in nsop$_1$ theories.
\newblock available on \verb"https://arxiv.org/pdf/1909.08368.pdf".

\bibitem{dpMIN}
Alfred Dolich, John Goodrick, and David Lippel.
\newblock {Dp-Minimality: Basic Facts and Examples}.
\newblock {\em Notre Dame Journal of Formal Logic}, 52(3):267 -- 288, 2011.

\bibitem{sheDza}
Mirna D\v{z}amonja and Saharon Shelah.
\newblock On $\triangleleft^*$—maximality.
\newblock {\em Annals of Pure and Applied Logic}, 125(1):119--158, 2004.

\bibitem{Granger}
Nicholas Granger.
\newblock Stability, simplicity, and the model theory of bilinear forms.
\newblock PhD thesis, University of Manchester, 1999.

\bibitem{HL1}
Daniel~Max Hoffmann and Junguk Lee.
\newblock Co-theory of sorted profinite groups for {P}{A}{C} structures.
\newblock Submitted, available on \verb"https://arxiv.org/abs/1905.09748".

\bibitem{kaplanramsey2017}
Itay Kaplan and Nicholas Ramsey.
\newblock On {K}im-independence.
\newblock {\em Journal of the European Mathematical Society}, 22:1423 -- 1474,
  2020.

\bibitem{kaplanramsey2021}
Itay Kaplan and Nicholas Ramsey.
\newblock Transitivity of kim-independence.
\newblock {\em Advances in Mathematics}, 379:107573, 2021.

\bibitem{KRS}
Itay Kaplan, Nicholas Ramsey, and Saharon Shelah.
\newblock Local character of kim-independence.
\newblock {\em Proc. Amer. Math. Soc.}, 147:1719--1732, 2019.

\bibitem{KimTalk}
Byunghan Kim.
\newblock Ntp$_1$ theories. slides.
\newblock Stability {T}heoretic {M}ethods in {U}nstable {T}heories,
  {B}{I}{R}{S}, 2009.

\bibitem{KimPi}
Byunghan Kim and Anand Pillay.
\newblock Simple theories.
\newblock {\em Annals of Pure and Applied Logic}, 88(2–3):149 -- 164, 1997.
\newblock Joint AILA-KGS Model Theory Meeting.

\bibitem{simpleGenerics}
Anand Pillay.
\newblock Definability and definable groups in simple theories.
\newblock {\em The Journal of Symbolic Logic}, 63(3):788--796, 1998.

\bibitem{Anand98}
Anand Pillay.
\newblock Definability and definable groups in simple theories.
\newblock {\em Journal of Symbolic Logic}, 63(3):788–796, 1998.

\bibitem{tentzieg}
Katrin Tent and Martin Ziegler.
\newblock {\em A Course in Model Theory}.
\newblock Lecture Notes in Logic. Cambridge University Press, 2012.

\bibitem{Minh17}
Chieu-Minh Tran.
\newblock Tame structures via sums over finite fields.
\newblock available on \verb"https://arxiv.org/pdf/1704.03853.pdf".

\end{thebibliography}

\end{document}